\newcommand{\N}{\mathbb{N}}
\newcommand{\Z}{\mathbb{Z}}
\newcommand{\cA}{\mathcal{A}}
\newcommand{\cB}{\mathcal{B}}
\newcommand{\cF}{\mathcal{F}}
\newcommand{\sfX}{\mathsf{X}}
\newcommand{\eps}{\varepsilon}
\newcommand{\sm}{\setminus}
\newcommand{\sn}{\subsetneq}
\newcommand{\per}{\mathrm{per}}
\newcommand{\bl}{\mathrm{Blanks}}
\newtheorem{thm}{Theorem}
\numberwithin{thm}{section}
\newtheorem{lemma}[thm]{Lemma}
\newtheorem{prop}[thm]{Proposition}
\newtheorem{cor}[thm]{Corollary}
\theoremstyle{definition}
\newtheorem{defn}{Definition}
\numberwithin{defn}{section}
\theoremstyle{remark}
\newtheorem{rmk}[thm]{Remark}
\title{Encoding subshifts through sliding block codes}
\author{Sophie MacDonald}
\date{\today}
\begin{document}

\maketitle

\begin{abstract}
    We prove a generalization of Krieger's embedding theorem, in the spirit of zero-error information theory. Specifically, given a mixing shift of finite type $X$, a mixing sofic shift $Y$, and a surjective sliding block code $\pi: X \to Y$, we give necessary and sufficient conditions for a subshift $Z$ of topological entropy strictly lower than that of $Y$ to admit an embedding $\psi: Z \to X$ such that $\pi \circ \psi$ is injective. 
\end{abstract}

\section{Introduction}\label{intro-sec}

In a foundational paper \cite{shannon-1948-bell} in information theory, Shannon introduced a model of a noisy communication channel, in which the input and output are modeled by stationary probability measures on a space of sequences of symbols. Shannon gave conditions under which the input can be recovered from the output, at least with an acceptable rate of error or ambiguity, in the case of a Bernoulli source, and this work has since been extended to more general sources \cite{kieffer-1981-wahr}.

This paper is motivated by the particular question of when one can ensure zero error, not just almost surely as in information theory but in fact deterministically. A deterministic channel can be modeled by a sliding block code, i.e. a continuous, shift-commuting map on a subshift, on which a stationary process could be supported. In this model, we can apply techniques of symbolic dynamics to investigate the effects of \textit{deterministic noise} \cite{mpw-82-sk}, also called \textit{distortion} \cite{shannon-1948-bell}, which we can interpret as a failure of injectivity of the sliding block code representing the channel, even in the absence of random errors.

The main result of this paper, \Cref{main-thm}, determines the extent to which the non-injectivity of a sliding block code on a mixing shift of finite type (SFT) can be avoided by restricting to a subshift of the domain. Interpreting the sliding block code as a channel with deterministic noise, \Cref{main-thm} characterizes the sources with entropy strictly lower than that of the output which can be transmitted without error or ambiguity. 

\begin{thm}\label{main-thm}
Let $X$ be a mixing SFT, $Y$ a mixing sofic shift, and $\pi: X \to Y$ a factor code. Let $Z$ be a subshift with topological entropy strictly less than that of $Y$. Then there exists a subshift $Z'$ of $X$ conjugate to $Z$ such that $\pi|_{Z'}$ is injective, if and only if for every $n \geq 1$, the number of periodic points of least period $n$ in $Z$ is at most the number of periodic points of least period $n$ in $Y$ with a $\pi$-preimage of equal least period.
\end{thm}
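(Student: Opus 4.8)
\emph{Necessity} is routine. Suppose $\psi\colon Z\to X$ is an embedding with $\pi\circ\psi$ injective, fix $n\geq 1$, and let $z\in Z$ have least period $n$. Since $\psi$ is injective and shift-commuting, $\psi(z)$ has least period exactly $n$; since $\pi\circ\psi$ is injective and shift-commuting, $\pi(\psi(z))$ likewise has least period exactly $n$. Hence $z\mapsto\pi(\psi(z))$ is an injection from the period-$n$ points of $Z$ into the period-$n$ points of $Y$ that admit a $\pi$-preimage (namely $\psi(z)$) of least period $n$, which is exactly the stated inequality.

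The plan for \emph{sufficiency} is a marker argument in the style of Krieger's embedding theorem, but executed simultaneously in $X$ and $Y$ and coupled through $\pi$. After the usual recodings I may assume $\pi$ is a $1$-block code and $X$ an edge shift, and note $h(Z)<h(Y)\leq h(X)$. The useful reformulation is that producing the desired $Z'\subseteq X$ is equivalent to producing an embedding $\phi\colon Z\to Y$ together with a continuous shift-commuting lift $\psi\colon Z\to X$ with $\pi\circ\psi=\phi$: given such $\phi,\psi$, the map $\psi$ is injective (because $\pi\circ\psi$ is) with continuous inverse $\phi^{-1}\circ\pi$ on $\psi(Z)$, so $Z':=\psi(Z)$ is conjugate to $Z$, and $\pi$ is injective on $Z'$ since $\pi(\psi(z))=\phi(z)$ already determines $z$; conversely the inclusion $Z'\hookrightarrow X$ together with the conjugacy $Z\cong Z'$ and $\pi|_{Z'}$ furnish such $\phi$ and $\psi$. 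So the goal is to embed $Z$ into $Y$ using only material that lifts through $\pi$.

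I would build $\phi$ and $\psi$ from three ingredients. First, a marker lemma for $Z$: a clopen, essentially disjoint marker set with inter-marker gaps confined to a window $[M,2M]$, valid outside a finite ``periodic core'' $P$ consisting of the points of period at most a fixed threshold, where $P$ is a finite union of periodic orbits handled separately. Second, a supply of $\pi$-compatible coding blocks: using the mixing of $X$ and the entropy gap, a large family of words in $X$ of a common large length $N$ that may be freely concatenated through short $X$-transition words and whose $\pi$-images are pairwise distinct and, moreover, form a synchronizable code in $Y$ --- a legal bi-infinite concatenation, read off in $Y$, determines the sequence of blocks and transitions used. Obtaining such a family is itself a marker-lemma argument \emph{relative to $\pi$}: one arranges a fixed word of $Y$ to occur in the $\pi$-image of any legal concatenation only at the intended block boundaries, so that a sliding block code on $Y$ can locate them; the inequality $h(Z)<h(Y)$ then guarantees there are enough blocks to encode injectively all length-$N$ words occurring in $Z$. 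Third, the periodic core: for each $n$, the hypothesis provides an injection from the period-$n$ orbits of $Z$ into the period-$n$ orbits of $Y$ carrying a period-$n$ $\pi$-preimage, and fixing one such $X$-preimage orbit per target orbit prescribes where $P$ maps, consistently in $X$ and in $Y$, making $\pi$ injective on that material since distinct orbits go to distinct orbits.

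To assemble, on a point $z\in Z$ the markers cut the coordinates into blocks, each of which either sits in a long aperiodic stretch --- replaced by the corresponding coding block in $X$ --- or sits over the periodic core --- filled using the chosen periodic preimage --- with short mixing transition words bridging consecutive regions; applying $\pi$ gives $\phi(z)\in Y$. One then checks that $\psi$ and $\phi$ are continuous and shift-commuting and that $\phi$ is injective: from $\phi(z)$ a sliding block code recovers the markers, hence the coding blocks and the periodic material, hence $z$. I expect the main obstacle to be precisely this coupled bookkeeping --- constructing coding blocks whose $\pi$-images are at once freely concatenable and uniquely parseable in $Y$ (the ``relative marker lemma''), and dovetailing these with the periodic-core assignment so that the transitions between near-periodic and generic regions, and the placement of the $\pi$-disambiguating markers, never collide --- all while keeping the assignment shift-commuting and continuous. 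As always in Krieger-type arguments, the low-complexity points of $Z$, where the marker set is sparse (eventually periodic points, or points with very long near-periodic runs), are the delicate case, and here they must be handled compatibly with the lift to $X$, which is where I would expect most of the work to lie.
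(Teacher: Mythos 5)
Your necessity argument is the same as the paper's, and your sufficiency plan has essentially the paper's architecture: a Krieger-style marker decomposition of $Z$, data blocks in $X$ whose $\pi$-images are uniquely parseable because a distinguished word of $Y$ locates the block boundaries, and near-periodic stretches coded by periodic orbits of $Y$ that have $\pi$-preimages of equal least period. What you call a ``relative marker lemma'' is exactly the paper's stamp mechanism (\Cref{stamp-def}, \Cref{stamp-exist}): all coding material is confined to a proper subshift $W \sn Y$ and the synchronizing word is chosen in $\cB(Y) \sm \cB(W)$ with small self-overlap, so it cannot occur inside data or periodic blocks.

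The genuine gap is at the point you defer as ``bookkeeping'': the supply of admissible periodic targets after the synchronization constraint is imposed. Once the data blocks \emph{and} the periodic coding material must avoid the synchronizing word (otherwise a long periodic stretch in the image could contain or overlap it and fool the decoder), the hypothesis $q_n(Z) \leq r_n(\pi)$ no longer applies directly: you need, for every least period $n$ up to the marker parameter $N$, at least $q_n(Z)$ periodic points of least period $n$ lying in the proper subshift $W$ and having equal-least-period preimages in $\pi^{-1}(W)$, and $N$ must be taken large to have enough data blocks, so this is not a ``finite periodic core'' fixed in advance. For low periods one can adjoin the finitely many relevant orbits of $R_n(\pi)$ to $W$, but for the mid-range periods the hypothesis gives nothing after the restriction; what is needed is the new counting fact that $\liminf_n \frac{1}{n}\log r_n(\pi) \geq h(Y)$, i.e.\ an exponentially large (at rate $h(Y)$) family of periodic points of $Y$ with equal-least-period $\pi$-preimages (\Cref{growth-rn}, proved via a self-overlap estimate), together with a construction of a proper $W \sn Y$ of entropy greater than $h(Y)-\eps$ whose preimage still realizes these counts and still admits a stamp (\Cref{enlarge-mixing}, \Cref{quant-summary}, \Cref{custom-W-exists}). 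Your proposal contains no argument producing these estimates --- the entropy gap $h(Z) < h(Y)$ by itself does not rule out that, after excluding the synchronizing word, too few equal-least-period-preimage points remain for some periods --- and this, rather than the assembly of markers and lifts, is the substantive content separating the theorem from Krieger's.
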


\Cref{main-thm} is a generalization of the following theorem of Krieger in the case of unequal entropy; in particular, \Cref{main-thm} reduces to \Cref{krieger-thm} in the case that $Y=X$ and $\pi$ is the identity.

\begin{thm}[Theorem 2 in \cite{wk-82-etds}]\label{krieger-thm}
Let $Y$ be a mixing shift of finite type and $Z$ a subshift. Then there is a subshift $Z' \subseteq Y$ conjugate to $Z$ if and only if $Z$ and $Y$ are conjugate or the (topological) entropy of $Z$ is less than that of $Y$ and, for every $n \geq 1$, the number of periodic points of least period $n$ in $Z$ is at most the corresponding number in $Y$.
\end{thm}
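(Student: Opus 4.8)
\emph{Proof plan.}

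I would prove the forward (necessity) direction first; it is the easy one. Suppose $\psi \colon Z \to X$ is an embedding, $Z' = \psi(Z)$, and $\pi|_{Z'}$ is injective; fix $z \in Z$ of least period $n$. Then $\psi(z)$ has least period $n$ (conjugacies preserve least periods), so $y := \pi(\psi(z))$ has period $n$, and if its least period were a proper divisor $m$ we would have $\pi(\sigma^m\psi(z)) = \sigma^m y = y = \pi(\psi(z))$ with $\sigma^m\psi(z) \neq \psi(z)$, contradicting injectivity of $\pi|_{Z'}$. Hence $y$ has least period exactly $n$ and admits the $\pi$-preimage $\psi(z)$ of the same least period. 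Since $\pi\circ\psi$ is injective, $z\mapsto\pi(\psi(z))$ injects the least-period-$n$ points of $Z$ into the least-period-$n$ points of $Y$ having a $\pi$-preimage of least period $n$, which is exactly the asserted inequality.

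The real content is the converse, and here the plan is to run the strategy behind Krieger's proof of \Cref{krieger-thm} --- the marker lemma together with a separate treatment of periodic points --- modified so that the image both lies in the SFT $X$ and stays decodable after applying $\pi$. After recoding I may assume $X$ is a $1$-step SFT and $\pi$ a $1$-block code. There are three ingredients. \emph{(i) Periodic skeleton:} for each $n$, the hypothesis supplies an injection from the least-period-$n$ orbits of $Z$ to the least-period-$n$ orbits of $Y$ admitting a lift to $X$ of least period $n$; lifting each chosen $Y$-orbit to such an $X$-orbit gives an equivariant, least-period-preserving injection $\phi_0 \colon \operatorname{Per}(Z) \to \operatorname{Per}(X)$ with $\pi\circ\phi_0$ still injective. \emph{(ii) Marker structure:} the marker lemma applied to $Z$ gives, for a large parameter $N$, a clopen set whose first $N$ translates are pairwise disjoint and which, together with its nearby translates, covers all of $Z$ outside the points of period below some bound, thereby presenting the generic part of $Z$ as a concatenation of finitely many describable ``marked blocks'' of lengths between $N$ and $2N$. \emph{(iii) Encoding through $Y$:} to each marked block of $Z$ I would assign, \emph{injectively}, a word occurring in $Y$ in a synchronized position (anchored by a synchronizing word of the mixing sofic shift $Y$); this is possible because $h(Z) < h(Y)$ guarantees, for appropriate lengths, more synchronized words in $Y$ than there are marked blocks to be coded. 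I then lift each such $Y$-word to a word of $X$ (possible since $\pi$ is onto, so every word of $Y$ lifts) and splice the lifts using mixing of $X$, interpolating the skeleton $\phi_0$ on and near the low-period points, where points of $Z$ agree with a periodic point along a long power of the period word.

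This should produce a sliding block code $\psi \colon Z \to X$ with finite coding window, and I would then verify the three required properties. Injectivity of $\pi\circ\psi$: reading $\pi(\psi(z))$, the synchronizing word of $Y$ reveals the marker positions and then each encoding $Y$-word, hence each marked block of $z$, hence $z$; on the periodic part this is the injectivity built into $\phi_0$. Injectivity of $\psi$ follows, so $Z' := \psi(Z)$ is a subshift of $X$ conjugate to $Z$, and $\pi|_{Z'}$ is injective because $\pi\circ\psi$ is and $\psi$ maps onto $Z'$. The main obstacle will be ingredient (iii) together with its compatibility with (i): the encoding must be engineered to simultaneously stay inside $X$ (forcing use of $X$-mixing and liftability of $Y$-words), stay legible under $\pi$ (forcing the encoding words into synchronized positions of $Y$ and exploiting the entropy gap), and glue to the periodic skeleton without creating spurious short periods or $\pi$-collisions with the periodic targets. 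It is precisely this last requirement that accounts for the refinement from ``$q_n(Y)$'' to ``periodic points of $Y$ with a $\pi$-preimage of equal least period'': a low-period orbit of $Z$ must be sent to an $X$-orbit whose $\pi$-image has the same least period and is a legitimate, distinct target in $Y$.
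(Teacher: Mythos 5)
Your proposal is aimed at the wrong statement. What you sketch---with the auxiliary SFT $X$, the factor code $\pi$, and the refined count of periodic points of $Y$ having a $\pi$-preimage of equal least period---is a proof plan for \Cref{main-thm}, whereas the statement in question is \Cref{krieger-thm}, Krieger's classical embedding theorem (no channel; $Y$ itself the mixing SFT), which the paper does not prove but cites from \cite{wk-82-etds}. Even on the charitable reading that you intend to deduce \Cref{krieger-thm} by specializing your general argument to $X=Y$ and $\pi=\mathrm{id}$ (so the refined count becomes $q_n(Y)$), you recover only the case $h(Z)<h(Y)$: the alternative ``$Z$ and $Y$ are conjugate'' never appears in your proposal. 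For necessity you would still need to observe that if $Z'\subseteq Y$ is conjugate to $Z$ and the entropy inequality fails, then $h(Z')=h(Y)$ forces $Z'=Y$ (a proper subshift of an irreducible sofic shift has strictly smaller entropy), hence $Z$ and $Y$ are conjugate; for sufficiency the conjugate case is trivial but is part of the statement. The periodic-point necessity computation you do give is correct, and is essentially the paragraph following \Cref{main-thm} in the paper.

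For the hard direction, your three ingredients have the right shape (they are the strategy of \cite{wk-82-etds} as adapted in this paper), but the plan stops exactly where the work begins, and one step is wrong as stated. The marker lemma does not present a general subshift $Z$ as marked blocks plus stretches lying along periodic points of $Z$: a long block of small local period need not agree with any periodic point of $Z$, which is why the paper first replaces $Z$ by a $1$-step SFT with the same entropy bound and periodic-point counts (\Cref{wlog-sft}) so that \Cref{per-lemma} supplies the periodic point to which a marker-free stretch corresponds; without this reduction your ``interpolation of the skeleton $\phi_0$'' is undefined. Likewise, ``assign injectively a synchronized word of $Y$, lift it to $X$, and splice using mixing'' is precisely what the paper spends \Cref{stamp-exist}, \Cref{blanks-embed-channel}, \Cref{marker-black-box}, and the counting results \Cref{custom-W-exists} and \Cref{growth-rn} to make rigorous: one must produce enough data words and liftable periodic orbits inside a proper subshift $W\subsetneq Y$, and one must prove that the stamp occurrences in the image are unambiguous so that the marker coordinates can be reconstructed and injectivity follows. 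You yourself flag this as ``the main obstacle'' but do not resolve it, so the sufficiency direction is asserted rather than proved.
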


We note that with $X,Y,Z,\pi$ as in the statement of \Cref{main-thm}, clearly there exists a subshift $Z'$ of $X$ conjugate to $Z$ such that $\pi|_{Z'}$ is injective if and only if there exists a sliding block code $\psi: Z \to X$ such that $\pi \circ \psi$ is injective, in which case $Z' = \psi(Z) \subset X$. To verify the ``only if'' statement in \Cref{main-thm}, suppose that there is a subshift $Z'$ of $X$ conjugate to $Z$ such that $\pi|_{Z'}$ is injective. Let $y \in \pi(Z')$ be periodic. Let $x = \pi|_{Z'}^{-1}(y)$ be the unique preimage of $y$ in $Z'$. Then the orbit of $x$ is in bijection with the orbit of $y$; otherwise, $\pi$ would fail to be injective on the orbit of $x$, which is contained in $Z'$. In particular, $x$ has finite orbit, so $x$ is periodic, moreover with $\per(x) = \per(y)$. Thus, every periodic point in $\pi(Z') \subset Y$ has a periodic preimage in $Z' \subset X$ of equal least period, which shows the necessity of the stated condition.

Both \Cref{main-thm} and \Cref{krieger-thm} give conditions for the existence of an embedding in terms of entropy and a periodic point condition. The following corollary, which we prove in \Cref{sec-conseq}, shows that the periodic point condition can be removed in exchange for a small loss of injectivity.

\begin{cor}\label{mmb-cor}
Let $X$ be a mixing SFT, $Y$ a mixing sofic shift, and $\pi: X \to Y$ a factor code. Let $Z$ be a subshift with topological entropy strictly less than that of $Y$. Then there exist a subshift $Z'$, a finite-to-one factor code $\chi: Z' \to Z$, and a sliding block code $\psi: Z' \to X$ such that $\pi \circ \chi$ is injective. Moreover, if $Z$ is mixing sofic with positive entropy (i.e. not a single fixed point), then $Z'$ can be taken to be a mixing SFT and $\chi$ can be taken to be almost invertible.
\end{cor}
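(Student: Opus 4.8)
The plan is to deduce \Cref{mmb-cor} from \Cref{main-thm} by applying that theorem to a well-chosen finite-to-one cover of $Z$. Write $B_n(W)$ for the set of words of length $n$ occurring in a subshift $W$, and for $n \ge 1$ put
\[
q_n \;=\; \#\bigl\{\, y \in Y : \per(y) = n \text{ and } y \text{ has a } \pi\text{-preimage of least period } n \,\bigr\}.
\]
By the equivalence recorded in the introduction, obtaining the code $\psi$ of \Cref{mmb-cor} is the same as embedding a subshift $Z'$ into $X$ so that $\pi$ is injective on its image, and by \Cref{main-thm} this is possible for any subshift $Z'$ with $h(Z') < h(Y)$ that has at most $q_n$ points of least period $n$ for every $n$. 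Since a finite-to-one factor code preserves entropy, it thus suffices to produce a subshift $Z'$ together with a finite-to-one factor code $\chi \colon Z' \to Z$ such that $Z'$ has at most $q_n$ points of least period $n$ for all $n \ge 1$.

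The heart of the matter is the estimate $\tfrac1n \log q_n \to h(Y)$ (note $h(Y) > 0$, since $0 \le h(Z) < h(Y)$). The inequality $q_n \le \#B_n(Y) = e^{n(h(Y)+o(1))}$ is trivial; for the reverse I would argue directly. Recode so that $\pi$ is a $1$-block code and $X$ a $1$-step SFT, and let $p_0$ be a mixing length for $X$. Given $w \in B_n(Y)$, lift it to $v \in B_n(X)$ with $\pi(v) = w$ and, using mixing of $X$, append a word $\gamma$ of bounded length so that $(v\gamma)^{\infty} \in X$; then $y := \pi\bigl((v\gamma)^{\infty}\bigr)$ is periodic with $y_{[0,n)} = w$ and $\per(y) \mid n + p_0$, and it has the $\pi$-preimage $(v\gamma)^{\infty}$, of period dividing $n + p_0$. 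Fixing one such choice per $w$, the points $y$ obtained are pairwise distinct (they have distinct length-$n$ prefixes), so $Y$ has at least $\#B_n(Y)$ periodic points of period dividing $m := n + p_0$ that admit a $\pi$-preimage of period dividing $m$. Of these, at most $\sum_{k \mid m,\ k < m} \#B_k(Y) = e^{(m/2)h(Y) + o(m)}$ have least period a proper divisor of $m$, so $e^{m(h(Y) - o(1))}$ of them have least period exactly $m$; and for such a point any $\pi$-preimage of period dividing $m$ automatically has period exactly $m$, so all of them are counted by $q_m$. Hence $q_m \ge e^{m(h(Y) - o(1))}$ for all large $m$.

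Granting this, fix $\delta = \tfrac14(h(Y) - h(Z)) > 0$; then $q_m \ge e^{m(h(Y) - \delta)}$ and $\#B_m(Z) \le e^{m(h(Z) + \delta)}$ for all large $m$, and since $h(Z) + \delta < h(Y) - \delta$ we may choose an integer $M$, large enough, with $M \cdot \#B_{kM}(Z) \le q_{kM}$ for every $k \ge 1$. Let $T \subseteq \{0,1\}^{\Z}$ be a single periodic orbit of least period $M$, put $Z' = Z \times T$, and let $\chi \colon Z' \to Z$ be projection to the first coordinate, a surjective $M$-to-one factor code; then $h(Z') = h(Z) < h(Y)$, every point of $Z'$ has least period a multiple of $M$, and for $n = kM$ the number of points of least period $n$ in $Z'$ is at most $\#\{z \in Z : \per(z) \mid n\} \cdot \#T \le M \cdot \#B_n(Z) \le q_n$. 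Thus $Z'$ satisfies the hypotheses of \Cref{main-thm}, which supplies $\psi$ and proves the first assertion. For the last sentence of the corollary, when $Z$ is mixing sofic with positive entropy I would instead take $Z'$ to be a mixing SFT admitting an almost invertible (hence entropy-preserving) factor code $\chi \colon Z' \to Z$; as $h(Z') = h(Z) < h(Y)$ and $Z'$ then has only $e^{n(h(Z) + o(1))}$ points of least period $n$, the inequality $\#\{\text{least period } n \text{ in } Z'\} \le q_n$ holds for all large $n$, while the finitely many small $n$ must be dealt with by choosing the cover so that short periodic orbits of $Z$ are sufficiently spread out in $Z'$.

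I expect the main effort to lie in the $q_n$ estimate above and, for the ``moreover'' clause, in producing the mixing SFT cover $Z'$ with $\chi$ almost invertible and periodic point counts at small periods consistent with $q_n$; by contrast the reduction to \Cref{main-thm} is formal, and the passage to $Z \times T$ is exactly the trick that replaces the uncontrolled short periodic orbits of $Z$ by a controlled number of long ones, for which the entropy gap $h(Z) < h(Y)$ makes room.
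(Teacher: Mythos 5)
For the first assertion of the corollary your argument is correct, and it takes a genuinely different route from the paper. The paper handles a general $Z$ by first embedding it into a mixing SFT $V$ of slightly larger entropy (\Cref{embed-mixing}, which rests on the stamp machinery of \Cref{gap-sft} and \Cref{gap-sft-entropy}), then running the mixing-sofic argument on $V$ and restricting to $\chi^{-1}(Z)$; you instead take $Z'=Z\times T$ with $T$ a single periodic orbit of least period $M$ and $\chi$ the projection. This forces every periodic point of $Z'$ to have least period a multiple of $M$, so the only counts to check are at periods $kM$, where $q_{kM}(Z')\le M\,|\cB_{kM}(Z)|\le r_{kM}(\pi)$ once $M$ is large, using the entropy gap together with $\frac{1}{n}\log r_n(\pi)\to h(Y)$. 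Your proof of that growth estimate is a correct variant of \Cref{growth-rn} (the paper controls the least period of the image via \Cref{self-overlap}; you discard the points whose least period is a proper divisor of $m$, which is legitimate since $h(Y)>h(Z)\ge 0$), and the rest is a clean reduction to \Cref{main-thm}. What this buys is a shorter argument for the unrefined statement, avoiding Markov approximations, stamps and \Cref{blowup} altogether; what it cannot buy is the refined statement, since $Z\times T$ is never mixing and the projection is $M$-to-one everywhere rather than almost invertible.

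The genuine gap is in the ``moreover'' clause. You correctly reduce to the observation that, for a mixing SFT cover $\Tilde{Z}$ of $Z$ with an almost invertible $\chi$, the inequality $q_n(\Tilde{Z})\le r_n(\pi)$ can only fail at finitely many $n$, but the phrase ``the finitely many small $n$ must be dealt with by choosing the cover so that short periodic orbits of $Z$ are sufficiently spread out in $Z'$'' is precisely the step that needs a mechanism, and none is supplied; your product trick is unavailable here because it destroys both mixing and almost invertibility. The paper's mechanism is \Cref{blowup}: one blows up, one orbit at a time, enough low-period orbits of $\Tilde{Z}$ (for each bad period $k$, $k^{-1}\max\{0,q_k(\Tilde{Z})-r_k(\pi)\}$ orbits) into orbits of length $M\per(z)$, each blow-up being an almost invertible factor code between mixing SFTs, so the composite cover retains both properties. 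Moreover one must then check that the newly created orbits of period $Mk$ do not themselves violate the condition; the paper does this by invoking \Cref{growth-rn} again to choose $M$ with $q_n(\Tilde{Z})+Cn\le r_n(\pi)$ for all $n\ge M$, where $C$ is the total deficiency $\sum_k \max\{0,q_k(\Tilde{Z})-r_k(\pi)\}$. Without this construction and verification (or an equivalent device), the ``moreover'' statement remains unproved in your write-up, even though everything you would need --- \Cref{blowup}, \Cref{growth-rn}, and the fact that a mixing sofic shift is an almost invertible factor of a mixing SFT --- is already stated in the paper and could be cited.
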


The code $\chi$ is in fact injective except on points in $Z'$ whose images in $Z$ are backward-asymptotic to one of finitely many periodic points in $Z$. See \Cref{blowup} and \Cref{rmk-blowup-details}. From \Cref{mmb-cor}, we can immediately conclude the following, with $h$ denoting the topological entropy of a subshift.

\begin{cor}
Let $X$ be a mixing SFT, $Y$ a mixing sofic shift, and $\pi: X \to Y$ a factor code. For any $\eps > 0$, there exists a mixing SFT $Z \subset X$ with $h(Z) > h(Y) - \eps$ such that $\pi|_Z$ is injective.
\end{cor}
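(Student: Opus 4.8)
The plan is to read this off quickly from \Cref{mmb-cor}. The only thing in the way is that \Cref{mmb-cor}, and in particular its ``moreover'' clause, applies to a subshift that is mixing sofic with positive entropy, whereas we are given only that $Y$ is mixing sofic. So the first move is to pass from $Y$ to a mixing sofic subshift $W \subseteq Y$ with $h(Y) - \eps < h(W) < h(Y)$; after that the statement is essentially immediate.

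In the degenerate case $h(Y) = 0$, the subshift $Y$, being mixing, is a single fixed point, and any fixed point of the nonempty mixing SFT $X$ is a mixing sub-SFT of $X$ of entropy $0 > -\eps = h(Y) - \eps$ on which $\pi$ is trivially injective; so assume $h(Y) > 0$ and, without loss of generality, $\eps < h(Y)$. To build $W$ I would take a right-resolving presentation $G$ of $Y$ (for instance the Fischer cover, the minimal right-resolving presentation of the irreducible sofic shift $Y$) for which the underlying edge shift $X_G$ is a mixing SFT with $h(X_G) = h(Y)$ and the label map induces a finite-to-one factor code $\mathcal{L}_\infty \colon X_G \to Y$; the Fischer cover's underlying graph is primitive precisely because $Y$ is mixing (were $X_G$ of period $p > 1$, one could fix a synchronizing word $m$ and observe that the lengths $n$ for which $m w m \in \mathcal{B}(Y)$ for some $w$ with $|w| = n$ all lie in one residue class mod $p$, contradicting mixing of $Y$). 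Then I would choose a primitive subgraph $H \subseteq G$ with $h(Y) - \eps < h(X_H) < h(Y)$ — possible because the entropy of a mixing SFT is approximated from below by its mixing sub-SFTs — and set $W := \mathcal{L}_\infty(X_H) \subseteq Y$. This $W$ is sofic (an image of an SFT), is mixing (a sliding-block-code image of a mixing subshift), and satisfies $h(W) = h(X_H)$ since $\mathcal{L}_\infty|_{X_H}$ is finite-to-one; thus $0 < h(W) < h(Y)$.

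Now I would apply \Cref{mmb-cor} with $W$ in the role of its ``$Z$'': as $W$ is mixing sofic with positive entropy, it yields a mixing SFT $Z'$, an almost invertible (hence finite-to-one) factor code $\chi \colon Z' \to W$, and a sliding block code $\psi \colon Z' \to X$ for which $\pi \circ \psi \colon Z' \to Y$ is injective. Injectivity of $\pi \circ \psi$ forces $\psi$ to be injective, so $Z := \psi(Z') \subseteq X$ is a subshift conjugate to $Z'$ and hence itself a mixing SFT; moreover $\pi|_Z$ is injective, since $\pi(\psi(z_1)) = \pi(\psi(z_2))$ yields $z_1 = z_2$ and therefore $\psi(z_1) = \psi(z_2)$. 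Finally $h(Z) = h(Z') = h(W) > h(Y) - \eps$ because $\chi$ is finite-to-one, so this $Z$ is as required.

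I expect the only step with real content to be the construction of $W$, and within that the one mild subtlety is ensuring $W$ is genuinely \emph{mixing} (not merely irreducible) sofic, which is exactly what the passage to a primitive subgraph of the (primitive) Fischer cover secures; everything else is formal bookkeeping around \Cref{mmb-cor}.
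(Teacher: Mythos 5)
Your overall route is the same as the paper's: read the corollary off from \Cref{mmb-cor} once one has in hand a mixing sofic shift $W$ of positive entropy with $h(Y)-\eps < h(W) < h(Y)$, and your bookkeeping at that stage is correct (reading the corollary's ``$\pi\circ\chi$'' as ``$\pi\circ\psi$'' is the right reading of a typo; injectivity of $\pi\circ\psi$ forces $\psi$ to be injective; $\psi(Z')$ is a mixing SFT; the almost invertible $\chi$ preserves entropy). The genuine gap is in how you produce $W$. The step ``choose a primitive proper subgraph $H$ of the Fischer cover $G$ with $h(Y)-\eps < h(X_H) < h(Y)$'' is unjustified and false in general: if $Y$ is the full $2$-shift, its Fischer cover is a single vertex with two self-loops, and every proper subgraph has entropy $0$, so no such $H$ exists for small $\eps$. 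The fact you implicitly appeal to --- that a positive-entropy mixing SFT has \emph{proper subshifts} which are mixing SFTs of entropy arbitrarily close to the ambient entropy --- is about sub-SFTs, not subgraphs of a fixed presentation, and it is not an off-the-shelf triviality; in this paper it is precisely what the stamp machinery is for (\Cref{sofic-approx-inside-sft} together with \Cref{enlarge-mixing}, packaged at the start of the proof of \Cref{quant-summary}).

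The repair is easy and keeps the rest of your argument intact: take $W=\pi(V_1)$, where $V_1\subset X$ is the mixing SFT produced in the first step of the proof of \Cref{quant-summary}, so that $W$ is mixing sofic (a factor of a mixing SFT), $W\subsetneq Y$ gives $h(W)<h(Y)$, and $h(W)>h(Y)-\eps$; alternatively, note that \Cref{mmb-cor} does not require $Z\subseteq Y$ at all, so any mixing sofic shift with entropy in $(h(Y)-\eps,h(Y))$ would serve, and then apply \Cref{mmb-cor} exactly as you do. One smaller point: your treatment of the degenerate case $h(Y)=0$ assumes $X$ has a fixed point, which a mixing SFT need not have (consider the edge shift of a primitive graph with no self-loops); in fact, when $Y$ is a single fixed point and $X$ has no fixed point, there is no nonempty subshift of $X$ on which $\pi$ is injective, so this corner case lies outside the scope of the statement, just as it lies outside the ``moreover'' clause of \Cref{mmb-cor} on which the derivation rests.
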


The proof of \Cref{main-thm} adapts the strategy used to prove \Cref{krieger-thm} in \cite{wk-82-etds,lm-95-intro} and related results in \cite{mb-1984-etds}. The outline of the proof is as follows. We use a marker set, as in the proof of \Cref{krieger-thm}, to break points in $Z$ into moderate blocks and long periodic blocks, separated by marker coordinates. We code these separately using certain ``data blocks'' in $Y$, some of moderate length and some long and periodic, where the long periodic data blocks come from periodic points with $\pi$-preimages of equal least period in $X$. A block in $Z$ between marker coordinates is coded to a data block in $Y$ which is shorter by an additive constant, so that there are gaps between the data blocks, filled with repetitions of a ``blank'' symbol. We then lift the data blocks from $Y$ to data blocks from $X$, then replace the blanks with a ``stamp'' block from $X$ to form a valid point in $X$. The stamp block is chosen to ensure that once the point in $X$ is coded into $Y$ by $\pi$, the locations of the stamp, and thus of the marker coordinates, can be recognized. These manipulations of markers, blanks, and stamps are presented in detail in \Cref{sec-code}, while the quantitative arguments required to construct the data blocks and stamps are given in \Cref{sec-count}.

The statement of \Cref{krieger-thm} is false for $X$ merely mixing sofic, and to date there is no known characterization of the subshifts that embed into a given mixing sofic shift, though some sufficient conditions are known \cite{mb-1984-etds,thomsen-2004-sofic}. \Cref{main-thm} sheds some light on this problem, without resolving it. Salo-Törmä have answered \cite{vs-it-22-mo} the following related question: let $Y$ be a mixing sofic shift and $Z \subset Y$ a mixing SFT. For which such $Y,Z$ do there exist a mixing SFT extension $\pi: X \to Y$ and a (mixing SFT) $Z' \subset X$ such that $\pi|_{Z'}: Z' \to Z$ is a conjugacy? However, it is unclear how the conditions given in that answer compare to those in \Cref{main-thm}, or to the results given in \cite{thomsen-2004-sofic}. As a final related question, when $Y$ is an SFT and $Z$ is conjugate to $Y$, the existence of an SFT $Z' \subset X$ conjugate to $Z$ such that $\pi|_{Z'}: Z' \to Y$ is a conjugacy, i.e. is surjective as well as injective, has been studied in \cite{hmw-2022}, continuing work from \cite{mpw-82-sk}.

\section{Conventions, definitions, and background on symbolic dynamics}\label{conv-sec}

\subsection{Subshifts and sliding block codes}\label{ss-sbc}

Let $\cA$ be a finite set with the discrete topology, which we will call an \textit{alphabet}. The set $\cA^{\Z}$ of bi-infinite sequences over $\cA$, equipped with the product topology, is called the \textit{full shift} over $\cA$, so called because the shift action $\sigma: \cA^{\Z} \to \cA^{\Z}$, given by $(\sigma x)_i = x_{i+1}$, is a homeomorphism. A closed, shift-invariant subset of the full shift is called a \textit{subshift}. The topology on $\cA^{\Z}$ is generated by \textit{cylinders}, which are sets of the form
\[
[w]_i := \{ x \in \cA^{\Z} \, | \, x_{i+j} = w_{j}, \, 0 \leq j \leq n -1   \},
\]
where $w \in \cA^n$ is a \textit{block} or \textit{word} of length $n \in \N$, and $i \in \Z$. Note that by shift-invariance, for any subshift $X \subset \cA^{\Z}$ and any block $w \in \cA^*$, we have $X \cap [w]_i \neq \emptyset$ for some $i \in \Z$ if and only if $X \cap [w]_i \neq \emptyset$ for all $i \in \Z$. 

A subshift $X \subset \cA^{\Z}$ is characterized by the set $\cB(X)$ of blocks $w \in \cA^*$ such that $X \cap [w] \neq \emptyset$, called the \textit{language} of $X$. When the intended subshift $X$ is clear, we write $[w]_i$ for $X \cap [w]_i$. We write $\cB_n(X) = \cB(X) \cap \cA^n$. We can equivalently characterize a subshift by a set of forbidden words $\cF \subset \cA^*$, writing $\sfX_{\cF} := \overline{\cA^{\Z} \sm \bigcup_{w \in \cF} \bigcup_{i \in \Z} [w]_i  }$. Note that in general $\cF \sn \cA^* \sm \cB(\sfX_{\cF})$. For a given subshift $X \subset \cA^{\Z}$, there may be several different sets of forbidden words $\cF \subset \cA^*$  such that $X = \sfX_{\cF}$. A \textit{shift of finite type} (SFT) is a subshift $X$ such that $X = \sfX_{\cF}$ for some finite set $\cF$. A \textit{$k$-step} SFT over $\cA$ is an SFT of the form $X = \sfX_{\cF}$ for some set $\cF \subset \cA^{k+1}$.

It is a theorem (the Curtis-Hedlund-Lyndon theorem) that, for subshifts $X,Y$, a function $\phi: X \to Y$ is continuous and shift-equivariant if and only if it is a \textit{sliding block code}, which means that there exist $m,n \geq 0$ and $\Phi: \cB_{m+n+1}(X) \to \cB_1(Y)$ such that for every $x \in X$ and every $i \in \Z$, $\phi(x)_i = \Phi(x_{[i-m,i+n]})$. We say that $\phi$ is a $k$-block code if $m+n+1=k$. A \textit{factor code} is a surjective sliding block code, and for a sliding block code $\phi$ defined on a subshift $X$, we say that the image $\phi(X)$ is a \textit{factor} of $X$, and that $X$, or more properly $\phi: X \to \phi(X)$, is an \textit{extension} of $\phi(X)$. A \textit{sofic shift} (from the Hebrew \<swpy>, ``sofi'', meaning ``finite'') is any factor of a shift of finite type. An injective sliding block code is called an \textit{embedding}, and a bijective sliding block code is called a \textit{conjugacy}. The properties of being sofic and of finite type are both invariant under conjugacy.

A subshift $X$ is said to be \textit{irreducible} if for all $u, w \in \cB(X)$, there exists $v \in \cB(X)$ such that $uvw \in \cB(X)$, and \textit{strongly irreducible} with gap $g \geq 1$ if, for any $u,w$, we can take always take $v \in \cB_g(X)$. Any factor of an irreducible (resp. strongly irreducible) subshift is irreducible (resp. strongly irreducible). A \textit{periodic point} in a subshift $X$ is a point $x \in X$ with $x = \sigma^n x$ for some $n \geq 1$---we say that $x$ has period $n$. The \textit{least period} $\per(x)$ of a periodic point $x$ is the least $n$ such that $\sigma^n x = x$. Note that $| \{ \sigma^n x \, | \, n \in \Z  \} | = \per(x)$. We write $P(X)$ for the set of periodic points in a subshift $X$, $Q_n(X)$ for the set of periodic points of least period $n$, and $q_n(X) = |Q_n(X)|$. The number of periodic points of a given least period is a conjugacy invariant.

It is a theorem that periodic points are dense in any irreducible shift of finite type. The \textit{period} $\per(X)$ of an irreducible shift of finite type $X$ is the gcd of the periods of the periodic points of $X$. An irreducible SFT with period $1$ is said to be \textit{aperiodic}. An irreducible SFT is strongly irreducible if and only if it is aperiodic, if and only if has periodic points of all sufficiently high periods. For irreducible sofic shifts, strong irreducibility is equivalent to having periodic points of all sufficiently high periods, which clearly implies that the periods have gcd $1$, but the reverse implication fails. For example, consider the odd shift over $\{ 0,1 \}$, in which the block $10^n1$ is permitted only for odd $n$. This is an irreducible sofic shift which contains the fixed point $0^{\infty}$, so the periods of periodic points trivially have gcd $1$. However, the odd shift has no other periodic points of odd period. We follow the convention of the literature in referring to strongly irreducible sofic shifts (in particular SFTs) as mixing sofic shifts (mixing SFTs), because they are also characterized by a topological mixing property, but we will not use that property explicitly, so we do not define it here.

The following definition is new, and we use it extensively.

\begin{defn}
Let $X$ and $Y$ be subshifts and let $\pi: X \to Y$ be a factor code. We write $R_n(\pi)$ for the set of periodic points $y \in Y$ such that $y = \pi(x)$ for some periodic point $x \in X$ with $\per(x) = \per(y)$. We write $r_n(\pi) = |R_n(\pi)|$.
\end{defn}

For a subshift $X$, the (topological) \textit{entropy} of $X$ is the value $h(X) = \inf_{n \geq 1} \frac{1}{n} \log |\cB_n(X)|$; in fact, the limit $\lim_{n \to \infty} \frac{1}{n} \log |\cB_n(X)|$ exists and is equal to $h(X)$. For a mixing sofic shift (in particular, a mixing SFT) $X$, we also have $h(X) = \lim_{n \to \infty} \frac{1}{n} \log q_n(X)$. Entropy is non-increasing under factor codes and is thus a conjugacy invariant, though certainly not a complete invariant. For any irreducible sofic shift $X$, and any proper subshift $V \subset X$, we have $h(V) < h(X)$. In \Cref{sec-count}, we use the following lemma of Marcus, which allows us to approximate a sofic shift from the inside by SFTs in terms of entropy.

\begin{lemma}[Proposition 3 in \cite{bm-1985-encod}]\label{sofic-approx-inside-sft}
Let $Y$ be a sofic shift. For every $\eps > 0$, there exists an irreducible SFT $U \subseteq Y$ with $h(U) > h(Y) - \eps$. 
\end{lemma}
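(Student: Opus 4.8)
The plan is to reduce to the case where $Y$ is an irreducible sofic shift, and then to build the desired SFT inside a right-resolving presentation of $Y$, using a long ``magic word'' as a comma-free delimiter in a return-word construction.

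First I would reduce to $Y$ irreducible. Take any right-resolving presentation $(G,\ell)$ of $Y$ (a finite directed graph $G$ with an edge labelling $\ell$ injective on the edges leaving each vertex), discarding vertices not on bi-infinite paths. Since a path in $G$ is determined by its label together with its initial vertex, the induced map $\cB_n(X_G)\to\cB_n(Y)$ is at most $|V(G)|$-to-one, so $|\cB_n(Y)|\ge|\cB_n(X_G)|/|V(G)|$ and hence $h(Y)=h(X_G)$. Decomposing $G$ into irreducible components $H_1,\dots,H_k$ gives $h(X_G)=\max_j h(X_{H_j})$; fixing $j$ attaining the maximum, $Y_j:=\ell(X_{H_j})\subseteq Y$ is irreducible sofic, and the same bounded-to-one estimate applied to $\ell|_{X_{H_j}}$ gives $h(Y_j)=h(X_{H_j})=h(Y)$. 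An irreducible SFT $U\subseteq Y_j$ with $h(U)>h(Y_j)-\eps$ then also works for $Y$, so from now on assume $Y$ is irreducible and write $h:=h(Y)$. Now take the Fischer cover $(G,\ell)$ of $Y$: $G$ is irreducible, $\ell$ is right-resolving, $h(X_G)=h$, and the presentation admits a \emph{magic word} (a word every occurrence of which as a path-label ends at one fixed vertex). Extending such a word on the right by a loop and re-reading, I would produce a magic word $w$ and a vertex $v^*$ such that every $w$-labelled path ends at $v^*$ \emph{and} $v^*$ carries a loop $p$ labelled $w$; there is freedom here to take $w$ long, unbordered, and (by irreducibility) containing any prescribed word of $\cB(Y)$, which I will use below to control entropy.

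Put $L:=|w|$, fix a large $N$, let $E$ be the set of loops $\eta$ at $v^*$ of length $N-L$ whose label avoids $w$ and does not spell $w$ when abutted with $w$ on either side (finitely many local conditions), and let $V\subseteq X_G$ be the set of all bi-infinite concatenations of the length-$N$ blocks $p\,\eta$, $\eta\in E$. Since $w$ is unbordered and the abutment conditions hold, in any point of $V$ the word $w$ occurs in the label exactly at the coordinates $\equiv 0\pmod N$ of the natural phase, so the block decomposition is recoverable from a bounded window and $V$ is (conjugate to) an SFT; as all blocks start and end at $v^*$ and $E\neq\emptyset$, $V$ is irreducible. Set $U:=\ell(V)\subseteq Y$. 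To see $\ell|_V$ is injective, suppose $\gamma,\gamma'\in V$ have the same label: that label contains $w$ at infinitely many coordinates to the left and to the right, and $w$ is magic, so $\gamma$ and $\gamma'$ sit at $v^*$ at infinitely many common coordinates on both sides; from any such coordinate right-resolvingness forces agreement forever forward, and letting the coordinate tend to $-\infty$ gives $\gamma=\gamma'$. Hence $\ell|_V\colon V\to U$ is a conjugacy, $U$ is an irreducible SFT contained in $Y$, and $h(U)=h(V)=\tfrac1N\log|E|$. It therefore remains only to verify the combinatorial claim that, for $w$ chosen long and suitably generic and $N$ large, $|E|\ge e^{(h-\eps)N}$.

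The main difficulty is exactly this last step, where three demands pull against one another: injectivity of $\ell|_V$ wants a recognizable delimiter, the SFT property of $V$ wants comma-free decodability, and $h(V)>h-\eps$ forbids spending more than a vanishing fraction of the entropy on the delimiter or on the ``avoid $w$'' restriction. The resolution is to use a single long magic word per block of length $N\to\infty$: being magic (rather than merely absent from the filler) is what preserves injectivity, while a slightly strengthened comma-free choice of $w$ gives the SFT property. For the entropy, the number of length-$(N-L)$ loops at $v^*$ whose label avoids $w$ is governed by the entropy of the (finite-type) subsystem of $X_G$ on labels avoiding $w$, which is strictly below $h$ but tends to $h$ as $w$ is taken longer and more generic (a fixed long word is avoided by all but an exponentially small proportion of long blocks); once that entropy exceeds $h-\eps$, sending $N\to\infty$ kills the $L/N$ overhead. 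Making this approximation argument — including the bookkeeping that $v^*$ still lies in a nearly-full-entropy part of the ``avoid $w$'' subsystem — is the one genuinely quantitative point.
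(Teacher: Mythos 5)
This lemma is quoted by the paper from Marcus (Proposition 3 in \cite{bm-1985-encod}) without proof, so there is no in-paper argument to compare against. Your architecture is the standard one and its qualitative parts are fine: the reduction to an irreducible sofic shift via a right-resolving presentation, the passage to the Fischer cover with a magic word $w$ labelling a loop at its terminal vertex $v^*$, and the verification that the set $V$ of concatenations of blocks $p\eta$ is an irreducible SFT on which the label map is injective (magic word plus right-resolvingness) are all correct as sketched.

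The genuine gap is exactly the step you defer, namely $|E|\ge e^{(h-\eps)N}$ — and this is where all of the content of the lemma lives, since everything else costs no entropy. Moreover, the heuristic you offer for it is false as stated: it is not true that ``a fixed long word is avoided by all but an exponentially small proportion of long blocks''; for fixed $w$, the proportion of $n$-blocks of $Y$ avoiding $w$ tends to $0$ exponentially as $n\to\infty$. What is needed is a growth-rate statement in which $|w|$ and $N$ go to infinity together, and ``generic $w$'' does not supply it, because $w$ is constrained to be magic and to label a loop at $v^*$. A repair that stays inside your framework is to use the magic property itself in the count. Write $L=|w|$, $n=N-L$, $\lambda=e^{h}$ (the case $h(Y)=0$ is trivial via a periodic orbit), and use Perron--Frobenius bounds: the number of loops of length $n$ at $v^*$ is at least $c\lambda^{n}$ (for $n$ in the right residue class mod the period of the cover), while all path counts are at most $C\lambda^{n}$. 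Since every occurrence of $w$ in the label of a loop forces a visit to $v^*$ at the end of that occurrence, the loops whose label contains $w$ internally number at most $nC^{2}\lambda^{\,n-L}$; occurrences straddling a junction are handled by choosing $w$ with no self-overlap exceeding $\alpha L$ (available by the same counting as in \Cref{self-overlap}; exact unborderedness, which you assert without justification, is neither needed nor clearly achievable) together with right-resolvingness, contributing at most $2LC'\lambda^{\,n-(1-\alpha)L}$. Taking $L$ large and then $N\approx L^{2}$, so that $L/N\to 0$ while $n\ll\lambda^{L}$, gives $|E|\ge\tfrac{c}{2}\lambda^{n}$ and hence $\tfrac1N\log|E|>h-\eps$. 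Without some argument of this kind (or an appeal to a quantitative ``forbidding one long word costs little entropy'' theorem, which itself requires proof and uniformity over your constrained $w$), the proposal does not yet prove the lemma.
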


For any subshift $X$ and any $k \geq 1$, we can form the \textit{$k$th higher block shift} $X^{[k]}$ with alphabet $\cB_k(X)$, where 
\[
w = (a_{1,1} a_{1,2} \dots a_{1,k}) (a_{2,1} a_{2,2} \dots a_{2,k}) \dots (a_{\ell,1} a_{\ell,2} \dots a_{\ell,k}) \in \cB(X^{[k]})
\]
if any only if for each $i,j$ we have $a_{i,j} = a_{i+1,j-1}$, so that
\[
w = (a_1 a_2 \dots a_k) (a_2 a_3 \dots a_{k+1}) \dots (a_{\ell+1} a_{\ell+2} \dots a_{\ell+k}),
\]
and $a_1 a_2 \dots a_{k+\ell} \in \cB(X)$. Observe that $X$ and $X^{[k]}$ are conjugate for any subshift $X$ and any $k \geq 1$. Moreover, if $X$ is an $m$-step SFT and $k \leq m-1$, then $X^{[k]}$ is an $(m-k)$-step SFT. In particular, every SFT is conjugate to a $1$-step SFT, and every sliding block code on an SFT can be written as a composition of a conjugacy and a $1$-block code. We will therefore frequently assume WLOG that a given sliding block code on an SFT is a $1$-block code on a $1$-step SFT.

For a sliding block code on an irreducible shift of finite type, either every fiber is a finite set (indeed, of bounded cardinality), in which case the code is said to be \textit{finite-to-one} and the entropy of the image is equal to that of the domain, or almost every fiber is uncountable, and the entropy of the image is strictly less than that of the domain. In the finite-to-one case, the minimum fiber cardinality is generic and is known as the \textit{degree}. In particular, a code (on an irreducible SFT) with degree $1$ is said to be \textit{almost invertible}. It is a theorem that every irreducible (resp. mixing) sofic shift is an almost invertible factor of an irreducible (resp. mixing) SFT. We use the following construction of almost invertible codes, known as the ``blowing-up lemma'', in the proof of \Cref{mmb-cor} in \Cref{sec-conseq}.

\begin{lemma}[Lemma 10.3.2, \cite{lm-95-intro}]\label{blowup}
Let $Z$ be a mixing SFT and let $z \in Z$ be a periodic point with least period $p$. Let $M \geq 1$. Then there exist a mixing SFT $Z'$ and an almost invertible factor code $\chi: Z' \to Z$ such that the preimage of the orbit of $z$ under $\chi$ is a single orbit of length $Mp$.
\end{lemma}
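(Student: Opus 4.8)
The plan is to realize $Z'$ as the edge shift of an explicit ``blow-up'' of the cycle carrying $z$, with the blow-up map playing the role of $\chi$. After recoding, I may assume $Z = X_G$ is the edge shift of an irreducible, aperiodic graph $G$, and after passing to a sufficiently high block presentation I may further assume that $z$ is carried by a \emph{simple} cycle $\gamma\colon I_0 \xrightarrow{e_0} I_1 \xrightarrow{e_1} \cdots \xrightarrow{e_{p-1}} I_0$ through $p$ distinct vertices. If $h(Z) = 0$ then $Z$ is a single fixed point, $\per(z) = 1$, and the assertion is non-vacuous only for $M = 1$, where $Z' = Z$ and $\chi = \mathrm{id}$ work; so I assume $h(Z) > 0$. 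Then $\gamma$ is a proper subgraph of $G$, and irreducibility of $G$ provides an edge $f\colon \star \to I_j$ whose head $I_j$ lies on $\gamma$ but whose tail $\star$ does not; after cyclically reindexing $\gamma$, assume $j = 0$.

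Next I would form $G'$: replace each cycle vertex $I_i$ by $M$ copies $I_i^{(0)}, \dots, I_i^{(M-1)}$, and join them into a \emph{single} cycle of length $Mp$,
\[
I_0^{(0)} \to I_1^{(0)} \to \cdots \to I_{p-1}^{(0)} \to I_0^{(1)} \to I_1^{(1)} \to \cdots \to I_{p-1}^{(M-1)} \to I_0^{(0)},
\]
with the edge leaving $I_i^{(k)}$ labelled $e_i$. Every remaining edge of $G$ is kept, but an edge whose head is a cycle vertex $I_i$ is redirected to point to $I_i^{(0)}$ (so that re-entering the cycle resets a $\Z/M$ counter), while an edge not on $\gamma$ whose tail is a cycle vertex $I_i$ is duplicated so as to leave each of $I_i^{(0)}, \dots, I_i^{(M-1)}$; such edges keep their original labels. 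The vertex map $I_i^{(k)} \mapsto I_i$ (identity elsewhere) together with this labelling is a graph homomorphism $G' \to G$, so it induces a $1$-block code $\chi\colon X_{G'} \to X_G$; surjectivity of $\chi$ follows by lifting an arbitrary bi-infinite walk of $G$, starting from any lift of a single vertex and propagating.

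Then I would verify the three required properties. \emph{Preimage of the orbit.} The labels $e_0, \dots, e_{p-1}$ occur in $G'$ only on the $Mp$ blow-up edges, which form precisely one cycle of length $Mp$; hence any point of $X_{G'}$ whose image lies in $\{\sigma^i z : i \in \Z\}$ must run along that cycle, so $\chi^{-1}(\{\sigma^i z : i\in\Z\})$ is exactly one $\sigma$-orbit, of length $Mp$. \emph{Mixing.} $G'$ is irreducible since every copy lies on the blow-up cycle, which passes through $I_0^{(0)}$, and $I_0^{(0)}$ is joined to the rest of $G'$ via $f$ and via the (redirected or duplicated) edges incident to $I_0$ in $G$; and $G'$ is aperiodic since walks in $G$ from $I_0$ to $\star$, which exist of every sufficiently large length because $G$ is irreducible and aperiodic, lift to walks from $I_0^{(0)}$ to $\star$ and then close up via $f$ into cycles of all sufficiently large lengths through $I_0^{(0)}$. \emph{Almost invertibility.} Every $\chi$-fibre has at most $M$ points (a lift is determined by the counter values at cycle positions, which are constrained by the history since the last re-entry), so $\chi$ is finite-to-one; and a doubly transitive point of $X_G$ has a \emph{unique} lift, because its counter value at any time is $0$ just after it last re-entered the cycle along a non-$\gamma$ edge and increases by $1$ with each subsequent traversal of $e_{p-1}$, while such a point re-enters the cycle infinitely often in the past (otherwise it would be backward-asymptotic to $z$, hence not doubly transitive). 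Since $\chi$ is finite-to-one and some fibre is a singleton, its degree is $1$.

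The step I expect to be the main obstacle is forcing the degree of $\chi$ to be exactly $1$. The naive ``symmetric'' blow-up, distributing each edge entering the cycle over all $M$ copies, produces a code of degree $M$: a long run along $\gamma$ acquires $M$ incomparable lifts with nothing to disambiguate them. The asymmetric ``reset on re-entry'' convention is precisely what lets a bounded amount of context recover the counter value on generic points, and the delicate bookkeeping is to make this convention coexist with chords of $\gamma$ and with $\chi$ remaining a well-defined, surjective $1$-block code. (The corner cases $h(Z) = 0$ and $M = 1$ are handled separately, as above.)
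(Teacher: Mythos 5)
Your proposal is correct and follows essentially the same route as the paper: \Cref{blowup} is quoted from Lind--Marcus (Lemma 10.3.2), whose blowing-up construction is precisely your cycle blow-up with the asymmetric ``reset on re-entry'' convention, and your unique-lift argument for doubly transitive points is the same observation the paper records in \Cref{rmk-blowup-details} to upgrade finite-to-one to almost invertible. Your handling of the $h(Z)=0$ corner is also consistent with how the paper applies the lemma (only to nontrivial $Z$, as in the proof of \Cref{mmb-cor}), since for a single fixed point and $M \geq 2$ the conclusion cannot hold.
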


\begin{rmk}\label{rmk-blowup-details}
Note that in \cite{lm-95-intro}, the extension $\chi$ in \Cref{blowup} is only stated to be finite-to-one, but the existence of periodic points having unique preimage already implies almost invertibility. Indeed, the construction in \cite{lm-95-intro}, based on work in \cite{mb-1984-etds}, in fact shows that $\chi$ is injective except on the points that are backward-asymptotic to points in the preimage of the orbit of $z$, where we say that two points $z,z'$ are backward-asymptotic if $d(\sigma^n z, \sigma^n z') \to 0$ as $n \to - \infty$.
\end{rmk}

\subsection{Markers and Markov approximations}\label{ss-mark}

We now recall the constructions with markers and long periodic blocks that are at the heart of the proof of \Cref{main-thm}. For an alphabet $\cA$, we say that a block $w = w_1 \dots w_n \in \cA^n$ is $k$-periodic, or has self-overlap of $n-k$, if $w_{[k+1,n]} = w_{[1,n-k]}$, i.e. for $1 \leq i \leq n-k$ we have $w_i = w_{i+k}$. A given block may be $k$-periodic for several different $k$.

\begin{lemma}[Lemma 2.3 in \cite{mb-1984-etds}]\label{per-lemma}
Let $Z$ be a subshift, let $N \geq 1$, and $a, b \in \Z$ with $b - a \geq 2N$. Let $z \in Z$. If for every $i \in [a+N,b-N]$ there exists $p \leq N-1$ such that $z_{[i-N,i+N]}$ is $p$-periodic, then there is at most one periodic point $\zeta \in Z$ with $\per(\zeta) \leq N-1$ and $\zeta_{[a,b]} = z_{[a,b]}$. If $Z$ is a $1$-step SFT, then such a $\zeta$ exists. 
\end{lemma}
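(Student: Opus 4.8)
I would prove the two assertions separately. Uniqueness of $\zeta$ needs neither the local-periodicity hypothesis nor the assumption that $Z$ is an SFT, and follows from the Fine--Wilf theorem; the existence of $\zeta$ is where the hypothesis and the $1$-step SFT assumption come in.

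\emph{Uniqueness.} Suppose $\zeta,\zeta'$ are periodic points with $\per(\zeta),\per(\zeta')\le N-1$ and $\zeta_{[a,b]}=\zeta'_{[a,b]}=z_{[a,b]}$. Then the word $w:=z_{[a,b]}$ has both $\per(\zeta)$ and $\per(\zeta')$ as periods, and its length $b-a+1\ge 2N+1$ is strictly larger than $\per(\zeta)+\per(\zeta')\le 2(N-1)$, so by the Fine--Wilf theorem (a word of length at least $p+q$ with periods $p,q$ has period $\gcd(p,q)$) the word $w$ has period $d:=\gcd(\per(\zeta),\per(\zeta'))$. The $d$-periodic point extending $z_{[a,a+d-1]}$ then agrees with $z$ on all of $[a,b]$, hence with both $\zeta$ and $\zeta'$; since $[a,b]$ is longer than either least period and $d$ divides each of them, this point equals $\zeta$ and equals $\zeta'$, so $\zeta=\zeta'$. (This in fact gives uniqueness even in the full shift.)

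\emph{Existence.} Assume $Z$ is a $1$-step SFT. The key step is to show that $z_{[a,b]}$ itself has some period $q\le N-1$. I would prove by induction on $j$, from $j=a+2N$ up to $j=b$, that $z_{[a,j]}$ has a period $q_j\le N-1$. The base case is the hypothesis applied to $i=a+N$. For the inductive step, apply the hypothesis to $i=j+1-N$, which lies in $[a+N,b-N]$, so $z_{[j+1-2N,\,j+1]}$ has some period $p'\le N-1$; the blocks $z_{[a,j]}$ and $z_{[j+1-2N,\,j+1]}$ share the sub-block $z_{[j+1-2N,\,j]}$ of length $2N$, which exceeds $q_j+p'$, so by Fine--Wilf this shared block has period $d:=\gcd(q_j,p')$. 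Since the shared block is longer than $q_j$ and than $p'$, it contains a full period-block of each of the two larger blocks, and a word with period $r$ that has a length-$\ge r$ sub-block of period $d\mid r$ has period $d$ throughout; hence $z_{[a,j]}$ and $z_{[j+1-2N,\,j+1]}$ both have period $d$, and gluing them along their common sub-block shows $z_{[a,j+1]}$ has period $q_{j+1}:=d\le N-1$. With $q:=q_b$ and $w:=z_{[a,a+q-1]}$, the $q$-periodic point $\zeta$ determined by $\zeta_m=w_{(m-a)\bmod q}$ satisfies $\zeta_{[a,b]}=z_{[a,b]}$ and $\per(\zeta)\le q\le N-1$. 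Finally $\zeta\in Z$: since $2q\le 2(N-1)\le b-a$, the block $ww$ occurs inside $z_{[a,b]}\in\cB(Z)$, so every length-$2$ sub-block of $\zeta$ — these coincide with the length-$2$ sub-blocks of $ww$ — lies in $\cB_2(Z)$, and a point of the full shift lies in the $1$-step SFT $Z$ precisely when all of its length-$2$ blocks lie in $\cB_2(Z)$.

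\emph{Main obstacle.} The delicate point is the inductive step: distilling a single global period for $z_{[a,b]}$ out of the many, a priori distinct, local periods $p_i$. The resolution rests on two elementary facts about words — Fine--Wilf, and the propagation of a divisor period from a sufficiently long sub-block to the whole block — which together force the running period $q_j$ to descend (only finitely often, since it is a positive integer) and let neighbouring windows be glued. Granting these, the remaining ingredients (the $1$-step SFT membership check and the uniqueness argument) are routine.
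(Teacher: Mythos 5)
Your proof is correct. Note that the paper gives no argument of its own for this statement—it is imported verbatim as Lemma 2.3 of Boyle's paper \cite{mb-1984-etds}—so there is nothing internal to compare against; your route (unconditional uniqueness via Fine--Wilf, then an induction along the interval combining Fine--Wilf with the divisor-period propagation and gluing lemmas to extract a single period $q\le N-1$ for $z_{[a,b]}$, and finally the $1$-step SFT condition checked on the $2$-blocks of $ww$) is a complete and correct self-contained proof, in the same combinatorial spirit as the cited source.
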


\begin{lemma}[Lemma 2 in \cite{wk-82-etds}]\label{marker-lemma}
Let $Z$ be a subshift. For any $N \geq 1$, there exists a subset $F \subset Z$, which can be taken to be a finite union of cylinders, such that:
\begin{enumerate}
    \item the sets $\sigma^i F$, $0 \leq i \leq N-1$, are all disjoint, and
    
    \item if $z \notin \sigma^i F$ for all $-(N-1) \leq i \leq (N-1)$, then $z_{[-N,N]}$ is $p$-periodic for some $p \leq N-1$.
\end{enumerate}
\end{lemma}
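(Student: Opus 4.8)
The plan is to reframe conditions (1) and (2) as a packing-and-covering problem on a clopen set and settle it by a maximality argument. Put $E = \{z \in Z : z_{[-N,N]}\text{ is not }p\text{-periodic for any }1 \le p \le N-1\}$. Membership in $E$ depends only on the length-$(2N+1)$ word $z_{[-N,N]}$, so $E$ is a finite union of cylinders, in particular clopen; if $E = \emptyset$ we take $F = \emptyset$, so assume $E \neq \emptyset$. Condition (2) says exactly that $E \subseteq \bigcup_{i=-(N-1)}^{N-1}\sigma^i F$. Thus it suffices to produce a finite union of cylinders $F \subseteq Z$ with $\sigma^0 F,\dots,\sigma^{N-1}F$ pairwise disjoint (that is (1)) and $E \subseteq \bigcup_{|i|\le N-1}\sigma^i F$. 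I would record at the outset the one structural fact that makes this possible: every $z \in E$ has least $\sigma$-period at least $N$, since $\sigma^p z = z$ with $p \le N-1$ would make $z_{[-N,N]}$ $p$-periodic; in particular $z,\sigma z,\dots,\sigma^{N-1}z$ are $N$ distinct points of $Z$.

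I would then construct $F$ greedily. Enumerate the (countably many) cylinders of $Z$ and run through them, adjoining each to the set built so far exactly when this keeps the first $N$ translates pairwise disjoint. This yields a union of cylinders $\widehat F$ that is \emph{packing-maximal}: adjoining any further cylinder would make some pair among $\sigma^0\widehat F,\dots,\sigma^{N-1}\widehat F$ meet. Once the covering property below is established, compactness of $E$ lets us keep only finitely many of the cylinders comprising $\widehat F$ while retaining both properties, producing the desired finite union of cylinders $F$.

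The substance is the covering property $E \subseteq \bigcup_{|i|\le N-1}\sigma^i\widehat F$. Suppose some $z \in E$ is uncovered, i.e.\ $\sigma^m z \notin \widehat F$ for every $|m|\le N-1$. I claim a sufficiently fine cylinder $U = [z_{[-W,W]}]_{-W}$ around $z$ can be adjoined to $\widehat F$ without destroying the disjointness of the first $N$ translates, contradicting packing-maximality (as $z \in U \setminus \widehat F$). For $0 \le i,j \le N-1$ one has $\sigma^i z \notin \sigma^j\widehat F$, because $\sigma^i z \in \sigma^j\widehat F$ would mean $\sigma^{i-j}z \in \widehat F$ with $|i-j|\le N-1$; so each $\sigma^i z$ sits outside the (finitely many) sets $\sigma^j\widehat F$, and a small enough $U$ makes $\sigma^i U$ miss each $\sigma^j\widehat F$ and, simultaneously, makes $\sigma^0U,\dots,\sigma^{N-1}U$ pairwise disjoint — this last being possible precisely because $z,\sigma z,\dots,\sigma^{N-1}z$ are distinct, i.e.\ because $z$ has period $\ge N$. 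A short check that the four cross-intersections $\sigma^i(\widehat F \cup U)\cap\sigma^j(\widehat F \cup U)$, $0 \le i < j \le N-1$, are then all empty completes the contradiction, so (2) follows.

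The step I expect to be the main obstacle is this covering argument. Its combinatorial core — that an uncovered point of $E$ lets one strictly enlarge a packing-maximal set — is as sketched, and it rests essentially on the period-$\ge N$ property of points of $E$: that property is what keeps $\sigma^0U,\dots,\sigma^{N-1}U$ disjoint, the whole scheme collapses without it, and supplying it is precisely the purpose of the hypothesis that $z_{[-N,N]}$ has no period $\le N-1$. The remaining point is topological rather than combinatorial — since $\widehat F$ is a priori only open, one must check that each $\sigma^i z$ has an entire neighbourhood \emph{avoiding} $\sigma^j\widehat F$, not merely that $\sigma^i z$ avoids it — and this is a standard nicety of marker constructions (handled, e.g., by passing to a maximal regular-open packing, or by enumerating cylinders by increasing coordinate-width and tracking resolutions) that I would invoke rather than re-derive.
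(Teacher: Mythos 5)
The paper itself offers no proof of \Cref{marker-lemma} (it is quoted from Krieger), so your proposal has to be measured against the classical argument. Your skeleton is the right one: define $E$ by aperiodicity of the central window $z_{[-N,N]}$, note that every $z\in E$ has its $N$ translates $z,\sigma z,\dots,\sigma^{N-1}z$ distinct, and observe that the absence of periods $\le N-1$ is exactly what keeps the translates of a small central cylinder disjoint. But there is a genuine gap at precisely the step you flag and then wave off. Your $\widehat F$ is a countably infinite union of cylinders, hence open but in general not closed, and the packing-maximality argument only produces a contradiction if each $\sigma^i z$ avoids the \emph{closure} of each $\sigma^j\widehat F$. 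An uncovered $z\in E$ may perfectly well lie in $\overline{\sigma^m\widehat F}\sm\sigma^m\widehat F$ for some $1\le |m|\le N-1$; then every cylinder $U\ni z$ meets $\sigma^m\widehat F$, no enlargement is possible, and maximality yields nothing. Neither of your suggested patches is a routine fix: a Zorn-maximal open (or regular-open) packing has the same defect, and enumerating cylinders by width does not by itself prevent the limit set from accumulating on uncovered points of $E$. This closure issue is the crux, and it is the reason the classical proof is organized differently rather than a ``standard nicety'' one can invoke.

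The standard argument needs no infinite greedy pass, no maximality, and no compactness extraction. Since membership in $E$ depends only on $z_{[-N,N]}$, $E$ is the union of the finitely many cylinders $[w]_{-N}$ with $w\in\cB_{2N+1}(Z)$ having no period $\le N-1$. Enumerate these words as $w_1,\dots,w_r$, set $F_0=\emptyset$ and $F_k=F_{k-1}\cup\bigl([w_k]_{-N}\sm\bigcup_{|i|\le N-1}\sigma^i F_{k-1}\bigr)$, and take $F=F_r$. Every $F_k$ is clopen, i.e.\ a finite union of cylinders; disjointness of $\sigma^iF_k$, $0\le i\le N-1$, is preserved because a newly added point cannot be a small shift of a previously added point (such points were removed), while two points of the same cylinder $[w_k]_{-N}$ differing by a shift $m$ with $1\le m\le N-1$ would force $w_k$ to be $m$-periodic --- exactly the overlap observation you already make for your fine cylinder $U$; and the covering property holds by construction, since any $z\in E$ lies in some $[w_k]_{-N}$ and at stage $k$ is either already in $\bigcup_{|i|\le N-1}\sigma^iF_{k-1}$ or is placed into $F_k$. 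So your combinatorial core is correct, but it must be deployed on the finite family of central $(2N+1)$-cylinders with set-differences, not through an all-or-nothing greedy over all cylinders followed by a maximality-plus-compactness argument.
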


For any subshift $X$ and any $n \geq 1$, we can form the $n$th Markov approximation $X_n$, which is the SFT defined by allowing precisely the blocks of length $n$ which appear in $X$. Clearly $X_{n+1} \subset X_n$. It is an exercise to show that for any $\eps > 0$ and any $N \geq 1$, there exists $N' \geq N$ such that $h(X_{N'}) < h(X) + \eps$ and $q_n(X_{N'}) = q_n(X)$ for all $n \leq N$. In \Cref{wlog-sft}, we use the Markov approximation, together with higher block shifts, to show that in the proof of \Cref{main-thm}, we can assume WLOG that $Z$ is a $1$-step SFT, which allows us to apply \Cref{per-lemma}. 

We remark that there are versions of \Cref{marker-lemma} which obviate the need for \Cref{per-lemma}. However, for our purposes in this paper, embedding $Z$ into an SFT has the additional benefit that the rate of convergence of $\frac{1}{n} \log q_n(Z)$ to $h(Z)$ can be easily estimated when $Z$ is an SFT (see e.g. \cite{lm-95-intro}, pp. 349-351), which gives a procedure for deciding whether a given $X,Y,\pi,Z$ satisfy the periodic point condition in \Cref{main-thm}, assuming that $h(Z) < h(Y)$ (namely, compute $N \geq 1$ such that for all $n \geq N$, $q_n(Z) < r_n(\pi)$, then check all $n \leq N$ to determine whether $q_n(Z) \leq r_n(\pi)$). 

\begin{lemma}\label{wlog-sft}
Let $X$ be a mixing SFT, $Y$ a mixing sofic shift, and $\pi: X \to Y$ a factor code. Let $Z$ be a subshift with $h(Z) < h(Y)$ and $q_n(Z) \leq r_n(\pi)$ for all $n \geq 1$. Then there exists a $1$-step SFT $Z'$ such that $Z$ embeds into $Z'$, $h(Z') < h(Y)$, and $q_n(Z') \leq r_n(\pi)$ for all $n \geq 1$.
\end{lemma}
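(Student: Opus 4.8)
The plan is to take $Z'$ to be a $1$-step higher block presentation of a sufficiently high Markov approximation $Z_{N'}$ of $Z$. Since $Z$ and $Z_m$ share all words of length $\le m$, we have $h(Z_m)\downarrow h(Z)$, and for each fixed $n$ the sequence $q_n(Z_m)$ decreases to $q_n(Z)$ and is eventually constant; hence, as in the exercise stated after \Cref{marker-lemma}, for any $\eps>0$ and any $N_0\ge 1$ there is $N'\ge N_0$ with $h(Z_{N'})<h(Z)+\eps$ and $q_n(Z_{N'})=q_n(Z)$ for all $n\le N_0$. Note $h(Y)>0$ since $h(Z)\ge 0$; fix $\beta$ with $h(Z)<\beta<h(Y)$ and put $\eps=\beta-h(Z)$, so $h(Z_{N'})<\beta<h(Y)$. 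As $Z_{N'}$ is an $(N'-1)$-step SFT containing $Z$ by inclusion and $Z_{N'}^{[N'-2]}$ is a conjugate $1$-step SFT, the entropy requirement is met and the periodic point requirement $q_n(Z_{N'})\le r_n(\pi)$ holds for $n\le N_0$, where it reduces to the hypothesis $q_n(Z)\le r_n(\pi)$. It remains to treat $n>N_0$.

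The essential input is that $r_n(\pi)$ grows at exponential rate $h(Y)$:
\[
\liminf_{n\to\infty}\tfrac1n\log r_n(\pi)=h(Y).
\]
To prove this, assume (up to conjugacy of $X$, which does not affect $r_n(\pi)$) that $X$ is a $1$-step SFT and $\pi$ a $1$-block code, and present $Y$ by the graph $G$ of $X$ with each edge labeled by its $\pi$-image. Since $X$ is mixing, there is $g$ such that any two vertices of $G$ are joined by a path of every length $\ge g$. Given $w\in\cB_{m-g}(Y)$, write $w$ as the label of a path from a vertex $s$ to a vertex $t$, append a path of length $g$ from $t$ to $s$, and read off the label $wu$ (of length $m$) of the resulting closed path; then $(wu)^\infty$ is the $\pi$-image of a $\sigma^m$-fixed point of $X$, and $w\mapsto(wu)^\infty$ is injective. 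Hence the $\pi$-image of the set of $\sigma^m$-fixed points of $X$ has at least $|\cB_{m-g}(Y)|\ge e^{(m-g)h(Y)}$ elements. Discarding the $\sum_{d\mid m,\,d<m}q_d(Y)=e^{mh(Y)/2+o(m)}$ of them of least period below $m$ (using $\tfrac1d\log q_d(Y)\to h(Y)$) and noting that a $\sigma^m$-fixed point of $X$ whose image has least period $m$ is itself of least period $m$, we get $r_m(\pi)\ge e^{(m-g)h(Y)}-e^{mh(Y)/2+o(m)}$; since $h(Y)>0$ this gives $\liminf_m\tfrac1m\log r_m(\pi)\ge h(Y)$, and the reverse inequality is immediate from $r_m(\pi)\le q_m(Y)$.

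Now we assemble the count for $n>N_0$. A $\sigma^n$-fixed point is determined by its coordinates on $[0,n-1]$, so $q_n(Z_{N'})$ is at most $|\cB_n(Z_{N'})|$, which equals $|\cB_n(Z)|$ for $n\le N'$; since $\tfrac1n\log|\cB_n(Z)|\to h(Z)<\beta$, there is $n_0$ depending only on $Z$ and $\beta$ with $|\cB_n(Z)|\le e^{n\beta}$ for $n\ge n_0$, so $q_n(Z_{N'})\le e^{n\beta}$ for $n_0\le n\le N'$. For $n$ at least a suitable constant multiple of $N'$, the SFT trace bound $q_n(Z_{N'})\le\mathrm{tr}(A^n)\le(\#\mathrm{states})\,e^{nh(Z_{N'})}<(\#\mathrm{states})\,e^{n\beta}$ is itself $\le e^{n\beta}$. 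Choosing $N_0$ larger than $n_0$ and than the threshold implicit in the displayed lower bound on $r_n(\pi)$, and then $N'\ge N_0$ from the exercise, the ranges $n\le N_0$, $n_0\le n\le N'$, and $n$ a large multiple of $N'$ each give $q_n(Z_{N'})\le r_n(\pi)$.

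The step I expect to be the real obstacle is the remaining seam: values of $n$ somewhat larger than $N'$ but not yet large enough for the trace bound to beat $r_n(\pi)$, the difficulty being that $(\#\mathrm{states})$ in the trace bound is of size exponential in $N'$. To close this band one would want a sharper estimate of $|\cB_n(Z_{N'})|$ (equivalently of $\mathrm{tr}(A^n)$) for $n$ comparable to $N'$ — exploiting submultiplicativity of $k\mapsto|\cB_k(Z_{N'})|$ and the identity $|\cB_k(Z_{N'})|=|\cB_k(Z)|$ for $k\le N'$ — together with fixing the parameters $\eps,\beta,N_0,N'$ in a compatible order. This bookkeeping, rather than any new idea, is where the proof concentrates its difficulty.
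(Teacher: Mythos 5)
Your overall route is the same as the paper's (Markov approximation, higher-block recoding to get a $1$-step SFT, and exponential growth of $r_n(\pi)$ at rate $h(Y)$), and your re-derivation of $\liminf_n \frac1n\log r_n(\pi)\geq h(Y)$ is essentially sound — the paper quotes this as \Cref{growth-rn}, proving least period exactly $n$ via \Cref{self-overlap}, whereas you subtract the divisor-sum count of images of smaller least period; both work, and there is no circularity in citing \Cref{growth-rn} here. The genuine problem is the one you flag yourself: with a single approximation $Z_{N'}$ your estimates cover $n\leq N_0$, $N_0\leq n\leq N'$, and $n\geq CN'$, but not the band $N'<n<CN'$, so as written the lemma is not proved. (There is also a smaller ordering issue: for the trace threshold to be a constant multiple of $N'$ you need $\beta-h(Z_{N'})$ bounded below, so you should require $h(Z_{N'})<h(Z)+\eps/2$ rather than merely $<\beta$.) The seam is fillable by exactly the device you name but do not carry out: if $N'\geq 2n_0$, chop $[0,n)$ for any $n\geq n_0$ into consecutive intervals of lengths between $n_0$ and $N'$; every such piece of a word in $\cB_n(Z_{N'})$ lies in $\cB(Z)$, since words of $Z_{N'}$ of length at most $N'$ are words of $Z$, whence $q_n(Z_{N'})\leq|\cB_n(Z_{N'})|\leq\exp(n(h(Z)+\delta))$ for all $n\geq n_0$, with $\delta$ fixed in advance of $N'$; this makes the trace bound (whose threshold depends on $N'$) unnecessary and closes the band.

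The paper sidesteps the seam entirely by using two nested approximations instead of one plus a counting refinement. It first fixes a coarse approximation $Z_{m_0}$ with $h(Z_{m_0})<h(Z)+\eps/3$ and a threshold $m_1$ beyond which $\frac1n\log q_n(Z_{m_0})<h(Z_{m_0})+\eps/3$; only then does it choose a much finer approximation $Z_{m_2}$, $m_2\geq m_1$, large enough to kill all spurious periodic points of period at most $m_1$, so that $q_n(Z_{m_2})=q_n(Z)\leq r_n(\pi)$ for $n\leq m_1$. The key point is the containment $Z_{m_2}\subset Z_{m_0}$, which gives $q_n(Z_{m_2})\leq q_n(Z_{m_0})$ for every $n$; hence the bound valid for all $n\geq m_1$ is inherited by $Z_{m_2}$ no matter how large $m_2$ is, and there is no intermediate range left uncontrolled. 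Your approach, once patched with the chopping estimate above, is a legitimate alternative; the paper's two-step choice of parameters buys the same conclusion with no estimate on $\cB_n(Z_{N'})$ for $n$ beyond the approximation order at all.
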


We defer the proof of \Cref{wlog-sft} to \Cref{sec-conseq}.

\section{Coding}\label{sec-code}

In \Cref{ss-blank-stamp}, we introduce two coding constructions, namely subshifts with blanks adjoined, \Cref{blanks-def}, and stamps, \Cref{stamp-def}, then use them to create one side of an interface between $Z$ on the one hand and $\pi: X \to Y$ on the other. In \Cref{ss-marker-blank}, we use markers in $Z$ to construct the other side of this interface. In \Cref{ss-stamp-sft}, we use stamps to give a construction of SFTs analogous to $S$-gap shifts. We use this construction in \Cref{ss-good-per} to construct the shifts that are used in \Cref{ss-blank-stamp} and \Cref{ss-marker-blank}.

\subsection{Blanks and stamps}\label{ss-blank-stamp}

As outlined in \Cref{intro-sec}, the proof of \Cref{main-thm} involves coding $Z$ into $X$ via certain intermediate subshifts which consist of long ``data'' blocks separated by blanks. We now define this construction precisely.

\begin{defn}[subshift with blanks adjoined]\label{blanks-def}
Let $W$ be a subshift and let $N, \ell \geq 1$ with $\ell < N$. Let $*$ be a symbol not appearing in the alphabet of $W$. Let $M \subset \bigcup_{n=1}^{2N} \cB(W)$ be a set of blocks and let $Q \subset \cup_{n=1}^{2N-1} Q_n(W)$ be a set of periodic points. Denote by $\bl(M,Q,N,*,\ell)$ the subshift in which each point is of the form $\dots w_{-1} *^{\ell} w_0 *^{\ell} w_1 \dots$ where either $w_i \in M$ or $w_i = y_T$ where $y \in Q$ and $T=(-\infty,0], [0, +\infty), (-\infty, \infty)$, or $[0,m]$ with $m \geq 2N$.
\end{defn}

The purpose of the $\bl$ construction is to provide an interface between the channel $\pi: X \to Y$ and the subshift $Z$ to be embedded. One side of this interface, namely the embedding of a $\bl$ subshift into $X$, is specified in \Cref{blanks-embed-channel}. The construction involves particular blocks, which we call \textit{stamps}, that can be unambiguously recognized in the following sense:

\begin{defn}[stamp]\label{stamp-def}
Let $Y$ be a subshift, $W \subset Y$ a proper subshift, and $k \geq 1$. We say that $\mu \in \cB(Y) \sm \cB(W)$ is a $(Y,W,k)$ \textit{stamp} if for all $u_1, u_2 \in \cB(W)$ and $v_1, v_2 \in \cB_k(Y)$, $\mu$ appears exactly once in $u_1 v_1 \mu v_2 u_2$.
\end{defn}

\begin{rmk}
In \Cref{stamp-def}, continuing with the notation there, we do not explicitly require $u_1 v_1 \mu v_2 u_2$ to be legal in $X$. Doing so would neither affect the results nor simplify the proofs. In all of the examples we consider, such blocks will in fact be legal in $X$.
\end{rmk}

\begin{prop}\label{stamp-exist}
Let $Y$ be a strongly irreducible subshift with gap $g$ and $W \subset Y$ a proper subshift. For every $k \geq g$ and every sufficiently large $n$, there exists a $(Y,W,k)$ stamp of length $n$.
\end{prop}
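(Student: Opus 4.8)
The plan is to reduce the construction of a stamp to a purely combinatorial statement about blocks of $Y$: the existence, for every large $n$, of an \emph{unbordered} block of length $n$ in $\cB(Y)$ carrying a fixed ``witness'' subblock that does not occur in $W$.

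Since $W \subsetneq Y$, we have $\cB(W) \subsetneq \cB(Y)$; fix $f \in \cB(Y) \sm \cB(W)$ and put $m = |f|$. As $\cB(W)$ is closed under subwords, no block of $\cB(W)$ contains a copy of $f$. I claim that for every $n \geq 2k+m$, any $\mu \in \cB_n(Y)$ which is (i) \emph{unbordered} --- i.e.\ has no proper prefix equal to a proper suffix, equivalently is not $d$-periodic for any $1 \le d \le n-1$ --- and which (ii) contains an occurrence of $f$ within the coordinates $[k+1,\,n-k]$, is a $(Y,W,k)$ stamp. Indeed, fix $u_1,u_2 \in \cB(W)$ and $v_1,v_2 \in \cB_k(Y)$, set $S = u_1 v_1 \mu v_2 u_2$, and suppose $\mu$ occurs in $S$ somewhere besides the designated central copy. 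By (ii), $\mu$ contains $f$, so $\mu \notin \cB(W)$; hence by subword-closedness this other occurrence lies neither inside $u_1$ nor inside $u_2$. By (i) it cannot overlap the central copy (an overlap would force $\mu$ to be $d$-periodic for some $1 \le d \le n-1$). So it is disjoint from the central copy, hence contained in $u_1 v_1$ or in $v_2 u_2$. In the first case, not being contained in $u_1$, it ends inside $v_1$, so a proper prefix of $\mu$ of length at least $n-k$ is a suffix of $u_1 \in \cB(W)$ and therefore lies in $\cB(W)$; but by (ii) that prefix contains an occurrence of $f$, a contradiction. The second case is symmetric. So $\mu$ occurs exactly once in $S$.

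It remains to construct, for all sufficiently large $n$, a block $\mu \in \cB_n(Y)$ with properties (i) and (ii). Property (ii) is easy: strong irreducibility with gap $g$ (hence with fills of any length $\ge g$) lets us splice a copy of $f$ into the interior of any long legal block. The work is in (i). The route I would take: since $Y$ is strongly irreducible and infinite, it has periodic points of all sufficiently large periods, so we may pick two with coprime large least periods $P_1,P_2$ and primitive period-blocks $\rho_1,\rho_2$ (so $\rho_i^{\,t} \in \cB(Y)$ for all $t$), and set $\mu = \rho_1^{\,t_1}\,w_1\,f\,w_2\,\rho_2^{\,t_2}$ with $w_1,w_2 \in \cB_g(Y)$ furnished by strong irreducibility to make $\mu$ legal; varying $t_1,t_2,w_1,w_2$ realizes all large lengths. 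Once $t_1P_1$ and $t_2P_2$ are large, any border of $\mu$ of length $\ge P_1+P_2$ would force some block of length $\ge P_1+P_2$ to be simultaneously a factor of $\rho_1^{\infty}$ and of $\rho_2^{\infty}$, which the Fine--Wilf theorem forbids since $\rho_1,\rho_2$ are primitive of length $\ge 2$. The genuinely delicate point --- and the step I expect to be the main obstacle --- is ruling out the finitely many \emph{short} borders of $\mu$ (those of length roughly $\le P_1 + P_2 + g$, where a short prefix of $\rho_1^{\infty}$ could coincide with a short suffix of $\rho_2^{\infty}$): this must be arranged by choosing $\rho_1,\rho_2$, or short distinguishing blocks to prepend and append, suitably, once more exploiting the abundance of blocks in $\cB(Y)$ that strong irreducibility provides. (One can shorten this bookkeeping by instead placing a copy of $f$ also very near the front of $\mu$, so that no border of length $\ge m$ survives --- such a border would produce a spurious second occurrence of $f$ --- leaving only $O(m+k)$ short border-lengths to kill by the choice of the surrounding blocks.)
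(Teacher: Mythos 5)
Your reduction is correct and clean: if $\mu \in \cB_n(Y)$ is unbordered and contains a fixed $f \in \cB(Y) \sm \cB(W)$ at distance more than $k$ from both ends, the case analysis you give does show that $\mu$ is a $(Y,W,k)$ stamp. The gap is in the existence half, and it is exactly the step you flag as the main obstacle: you never establish that $\cB_n(Y)$ contains such an unbordered block for every large $n$. Unborderedness cannot come from counting (in any strongly irreducible subshift a positive proportion of $n$-blocks have, say, first symbol equal to last symbol, so short borders are not rare), and your construction $\rho_1^{t_1} w_1 f w_2 \rho_2^{t_2}$ only excludes long borders via Fine--Wilf; the short borders are left to an unspecified choice of $\rho_1,\rho_2$ and padding. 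The parenthetical patch (a second copy of $f$ near the front) does not close it: a border only forces another occurrence of $f$ at the mirrored position near the end of $\mu$, and nothing guarantees that region is $f$-free --- e.g.\ with $Y$ the full shift on $\{0,1\}$ and $W = \{0^\infty\}$, every admissible $f$ contains the symbol $1$, and every periodic point of least period at least $2$ contains $f$, so the tail $\rho_2^{t_2}$ cannot be chosen to avoid $f$. A smaller but real further issue: your construction invokes periodic points of all sufficiently large periods in $Y$; the paper states this only for sofic shifts, whereas the proposition concerns an arbitrary strongly irreducible subshift, so at this generality that step also needs justification.

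For comparison, the paper's proof avoids unborderedness altogether, and this suggests the cheapest repair of your scheme. It takes $\mu$ with no self-overlap exceeding $\alpha n$ --- a property that all but exponentially few $n$-blocks enjoy, by the counting in \Cref{self-overlap} --- which rules out occurrences shifted by less than $m+k$ from the central copy; and instead of a single interior witness it forces a witness $\theta \notin \cB(W)$ to appear in every window of length $m$ of $\mu$, by drawing $\mu$ from the syndetic subshift $S$ of \Cref{synd}, which still has positive entropy. Any occurrence shifted by at least $m+k$ then places an $m$-window of $\mu$ inside $u_1$ or $u_2 \in \cB(W)$, a contradiction. So you do not need unbordered $\mu$: ``no border longer than $n-(m+k)$'' plus syndetic occurrences of $f$ suffice for your own case analysis, and both are obtained by entropy and counting arguments rather than by the delicate word combinatorics your sketch leaves unresolved.
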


We defer the proof of \Cref{stamp-exist} to \Cref{ss-overlap}, but before applying stamps in \Cref{blanks-embed-channel}, we prove a lemma that expresses how stamps are actually used in our constructions.

\begin{lemma}\label{stamp-usage}
Let $Y$ be a subshift, $W \subset Y$ a proper subshift, $k \geq 1$, and $\mu \in \cB(Y) \sm \cB(W)$ a $(Y,W,k)$ stamp. Let $N \geq |\mu|$. Then for any $\gamma^{\pm} \in \cB_k(Y)$, and any $w \in \cB(W)$ with $|w| \geq N$, the stamp $\mu$ appears exactly twice in the block $\mu \gamma^- w \gamma^+ \mu$. 
\end{lemma}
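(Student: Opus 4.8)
The plan is to show that the only occurrences of $\mu$ in $B := \mu \gamma^- w \gamma^+ \mu$ are the two obvious ones, namely the prefix $\mu$ and the suffix $\mu$. That there are at least these two is immediate: they begin at positions $1$ and $|\mu| + 2k + |w| + 1$ of $B$, which are distinct. So the work is entirely in the ``at most two'' direction.

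The first step I would carry out is a localization. Write $B_L := \mu \gamma^- w$ for the length-$(|\mu|+k+|w|)$ prefix of $B$ and $B_R := w \gamma^+ \mu$ for the length-$(|\mu|+k+|w|)$ suffix of $B$. Because $|w| \geq N \geq |\mu|$, every occurrence of $\mu$ in $B$ --- a sub-block of length $|\mu|$ --- lies entirely within $B_L$ or entirely within $B_R$: if an occurrence starting at position $i$ is not contained in $B_L$, then $i \geq k + |w| + 2 \geq |\mu| + k + 1$, so it is contained in $B_R$. Hence it suffices to prove that $\mu$ occurs exactly once in $B_L$ and exactly once in $B_R$; since $B_L$ begins with $\mu$ and $B_R$ ends with $\mu$, those unique occurrences are forced to be the initial $\mu$ of $B$ and the final $\mu$ of $B$, and these are distinct, so we are done.

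The key point is that the stamp property of \Cref{stamp-def} requires a $k$-block flanking $\mu$ on each side, whereas there is nothing to the left of the initial $\mu$ of $B_L$; but we may simply pad. Assuming $W \neq \emptyset$ (otherwise the statement is vacuous), fix any $a \in \cB_1(W)$ and any $\alpha \in \cB_k(Y)$. Then $a\,\alpha\,\mu\,\gamma^-\,w$ is of the form $u_1 v_1 \mu v_2 u_2$ with $u_1 = a \in \cB(W)$, $v_1 = \alpha \in \cB_k(Y)$, $v_2 = \gamma^- \in \cB_k(Y)$, and $u_2 = w \in \cB(W)$, so since $\mu$ is a $(Y,W,k)$ stamp this block contains exactly one occurrence of $\mu$. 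Because $B_L$ is exactly the length-$(|\mu|+k+|w|)$ suffix of $a\,\alpha\,\mu\,\gamma^-\,w$, every occurrence of $\mu$ in $B_L$ corresponds to an occurrence in $a\,\alpha\,\mu\,\gamma^-\,w$ lying strictly to the right of $a\alpha$; there is only one such, the inserted $\mu$, which corresponds to the initial $\mu$ of $B_L$. Hence $\mu$ occurs exactly once in $B_L$, and symmetrically --- padding on the right with $w\,\gamma^+\,\mu\,\alpha\,a = u_1 v_1 \mu v_2 u_2$, taking $u_1 = w$, $v_1 = \gamma^+$, $v_2 = \alpha$, $u_2 = a$ --- exactly once in $B_R$.

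I do not anticipate a real obstacle here; the proof is essentially bookkeeping. The two things to get right are the index arithmetic in the localization step, which genuinely uses $|w| \geq |\mu|$ to separate the ``left cluster'' $\mu\gamma^-w$ from the ``right cluster'' $w\gamma^+\mu$, and the cosmetic padding needed to fit $B_L$ and $B_R$ into the template of \Cref{stamp-def}; if one adopts the convention that the empty word lies in $\cB(W)$, the padding symbol $a$ can be dropped.
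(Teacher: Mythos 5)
Your proof is correct and follows essentially the same route as the paper's: split $\mu\gamma^- w\gamma^+\mu$ into the overlapping subblocks $\mu\gamma^- w$ and $w\gamma^+\mu$, show each contains exactly one occurrence of $\mu$ via the stamp property, and use $|w|\ge|\mu|$ to rule out occurrences contained in neither. The only difference is cosmetic: you pad explicitly with $a\alpha$ (resp.\ $\alpha a$) to match the template of \Cref{stamp-def} literally, a step the paper leaves implicit.
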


\begin{proof}
By the hypotheses on $\mu$, $\gamma^{\pm}$, and $w$, and \Cref{stamp-def}, $\mu$ appears exactly once in each subblock $\mu \gamma^- w$, $w \gamma^+ \mu$. An appearance of $\mu$ other than at the positions explicitly indicated must therefore overlap both of these subblocks. Since $|w| \geq |\mu|$, $\mu$ must therefore be a subblock of $w$, contradicting the hypothesis that $w \in \cB(W)$ and $\mu \in \cB(Y) \sm \cB(W)$.
\end{proof}

We now give one of the main coding constructions (\Cref{blanks-embed-channel}), embedding a subshift with blanks adjoined, and with blocks from a subshift $V \subset X$, into $X$ via a sliding block code $\gamma$, such that $\pi \circ \gamma$ is injective. The large amount of data in the statement is representative of the complexity of the construction and the modular nature of the proof.

\begin{prop}\label{blanks-embed-channel}
Let $X$ be a mixing SFT with gap $g$, let $Y$ be a mixing sofic shift, and let $\pi: X \to Y$ be a $1$-block factor code.

Let $V \subset X$, $W = \pi(V) \subset Y$ be proper subshifts. Let $*$ be a symbol not appearing in the alphabets of $X,Y$. Let $N \geq 1$. Let $M \subset \bigcup_{n=1}^{2N-1} \cB_n(W)$ be a collection of blocks, and let $R \subset \bigcup_{n=1}^{N-1} R_n(\pi|_V)$ be a union of finite (i.e. periodic) orbits in $W$ with $\pi$-preimages of equal cardinality in $V$. Let $\kappa: M \to \cB(V)$ be an injection such that $\pi \circ \kappa(w) = w$ for each $w \in M$, and let $\Hat{M} = \kappa(M)$. Similarly, let $\lambda: R \to P(V)$ be a shift-commuting injection such that $\pi \circ \lambda(y) = y$ for each $y \in R$, and let $\Hat{R} = \lambda(R)$. Then for any $\ell \geq 1$, $\bl(M,R,N,*,\ell)$ and $\bl(\Hat{M}, \Hat{R}, N,*,\ell)$ are conjugate. 

Moreover, let $\mu \in \cB(Y) \sm \cB(W)$ be a $(Y,W,g)$ stamp such that $|\mu| \leq N$, and suppose that $M \subset \bigcup_{n=N}^{2N-1} \cB_n(W)$, i.e. $M$ contains no blocks of length less than $N$. Then there exists a sliding block code $\gamma: \bl(\Hat{M}, \Hat{R}, N,*, |\mu| + 2g) \to X$ such that $\pi \circ \gamma$ is injective.
\end{prop}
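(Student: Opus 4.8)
The plan is to handle the conjugacy and the coding construction separately. For the conjugacy $\bl(M,R,N,*,\ell)\cong\bl(\hat M,\hat R,N,*,\ell)$, I would define $\Psi$ to fix every blank coordinate, replace each data block $w\in M$ by $\kappa(w)$, and replace each periodic piece of a point $y\in R$ by the corresponding piece of $\lambda(y)$; the inverse is the same recipe built from $\kappa^{-1},\lambda^{-1}$. To see $\Psi$ is a sliding block code, note that from a window of radius about $2N+\ell$ around a coordinate $i$ one can decide whether $i$ lies in a blank run, in a data block (the maximal non-blank run through $i$ is then fully visible, of length $\le 2N-1$), or in a periodic piece (at least $2N$ consecutive non-blank symbols are visible); in the last case \Cref{per-lemma} identifies the unique $y$ of least period $\le N-1$ agreeing with the window, and since $\lambda$ is shift-commuting and preserves least period, the correct symbol of $\lambda(y)$ is pinned down by $i$ and the phase read off the window. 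That $\Psi$ lands in $\bl(\hat M,\hat R,N,*,\ell)$ uses $|\kappa(w)|=|w|$ (as $\pi$ is $1$-block) and $\per(\lambda(y))=\per(y)$; bijectivity and shift-equivariance are then clear.

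For the second part, fix once and for all a block $C\in\cB(X)$ with $\pi(C)=\mu$ — it exists because $\pi$ is a surjective $1$-block code — and note $C\notin\cB(V)$ since $\mu\notin\cB(W)$. Define $\gamma$ to act as the identity on the coordinates of data blocks and periodic pieces (which already lie in $\cB(V)\subseteq\cB(X)$), and to write, on each blank run $*^{|\mu|+2g}$, the block $B=s^-Cs^+$, where $s^\pm\in\cB_g(X)$ are connectors — chosen lexicographically least, for definiteness — making $\hat w\,s^-C$ and $C\,s^+\hat w'$ legal in $X$; such connectors exist by strong irreducibility of $X$ with gap $g$, and each depends only on a bounded number of symbols at the adjacent end of the neighbouring block, hence is computable from a window large enough to see one end of the blank run (so that the offset within the run, and thus which of $s^-,C,s^+$ a blank coordinate falls into, is determined) together with the boundary symbols just beyond it. Every junction in the resulting sequence is legal and every block is internally legal, so $\gamma(z)\in X$ and $\gamma$ is a sliding block code. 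Applying $\pi$, the point $\gamma(z)$ maps to a point $\eta$ of the form $\dots w_{-1}\,\delta^-_{-1}\mu\delta^+_{-1}\,w_0\,\delta^-_0\mu\delta^+_0\,w_1\dots$, with $w_i=\pi(\hat w_i)\in\cB(W)$ of length $\ge N$ and $\delta^\pm_i=\pi(s^\pm_i)\in\cB_g(Y)$.

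Injectivity of $\pi\circ\gamma$ — the crux — follows by reconstructing $z$ from $\eta$. First locate every occurrence of $\mu$ in $\eta$: by \Cref{stamp-usage} (applicable since $|\mu|\le N\le|w_{i+1}|$ and $\mu$ is a $(Y,W,g)$ stamp) each block $\mu\,\delta^+_i\,w_{i+1}\,\delta^-_{i+1}\,\mu$ contains exactly the two displayed copies of $\mu$, and these blocks cover $\eta$ as $i$ ranges over $\Z$, so the copies of $\mu$ in $\eta$ are precisely the inserted stamps. (If $z$ is a single half- or bi-infinite periodic piece there are no blank runs, hence no $\mu$ in $\eta$ at all since $\mu\notin\cB(W)$; such $z$ are separated from one another and from the remaining points via injectivity of $\pi\circ\lambda$.) The stamp positions determine the blank-run positions, hence the positions and contents of the $w_i$ after deleting the $g$-symbol $\delta$-padding; the length of $w_i$ says whether $\hat w_i=\kappa(w_i)$ (if $|w_i|\le 2N-1$) or $\hat w_i$ is a piece of $\lambda(y)$ for the $y\in R$ identified from $w_i$ by \Cref{per-lemma} (if $|w_i|\ge 2N$), its location being fixed by the neighbouring blank runs. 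This recovers $z$ uniquely. I expect the main work to be exactly this bookkeeping: checking the window sizes for $\Psi$ and $\gamma$ are genuinely finite, that the connector rule is well-defined and local, and that \Cref{stamp-usage} really accounts for every occurrence of $\mu$ in $\eta$, including the degenerate configurations of $z$.
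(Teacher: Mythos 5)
Your proposal is correct and follows essentially the same route as the paper: the conjugacy is the symbol-wise correspondence induced by $\pi$, $\kappa$, $\lambda$ (with \Cref{per-lemma} identifying long periodic pieces), and $\gamma$ fills each blank run with a $\pi$-preimage of $\mu$ flanked by $g$-connectors, with injectivity of $\pi\circ\gamma$ obtained by locating the stamps via \Cref{stamp-usage} and the lower bound $N$ on lengths in $M$, then reducing to the conjugacy. The only cosmetic differences are that the paper builds the conjugacy as the $1$-block code $\pi[*]$ (whose inverse is the map you construct directly) and works with $X$ recoded to be $1$-step rather than with bounded-memory connectors.
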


\begin{proof}
First, the conjugacy. Let $W[*] = \bl(M,R,N,*,\ell)$ and $V[*] = \bl(\Hat{M},\Hat{R},N,*,\ell)$. Consider the $1$-block code $\pi[*]$ defined on $V[*]$ by the block map $\pi[*](a) = \pi(a)$ for $a$ in the alphabet of $V$ and $\pi[*](*) = *$. We claim that $W[*] = \pi[*](V[*])$ and that $\pi[*]: V[*] \to W[*]$ is a conjugacy. To see that $W[*] \subseteq \pi[*](V[*])$, note that any $\xi \in V[*]$ is of the form
\[
\xi = \dots w_{-1} *^{\ell} w_0 *^{\ell} w_1 \dots
\]
where either $w_i \in \Hat{M}$ or $w_i = x_T$ for some $x \in \Hat{R}$ and $T$ an interval with $2N+1 \leq |T|$. If $w_i \in \Hat{M}$, then $\pi(w_i) \in M$; if $w_i = x_T$ for some $x \in \Hat{R}$, then $\pi(w_i) = \pi(x)_T$, and $\pi(x) \in R$. Therefore
\[
\pi[*](\xi) = \dots \pi(w_{-1}) *^{\ell} \pi(w_0) *^{\ell} \pi(w_1) \dots \in W[*]
\]
This shows that indeed $W[*] \subset \pi[*](V[*])$. Similarly, any $\eta \in W[*]$ is of the form
\[
\eta = \dots w_{-1} *^{\ell} w_0 *^{\ell} w_1 \dots
\]
where either $w_i \in M$ or $w_i = y_T$ for some $y \in R$ and $T$ an interval with $2N+1 \leq |T| \leq \infty$. For $\eta$ of this form, using \Cref{per-lemma}, we can use the injections $\kappa$, $\lambda$ to reconstruct a unique $\xi \in V[*]$ such that $\pi[*](\xi) = \eta$.

We now suppose that each block in $M$ has length at least $N$ and that we have a $(Y,W,g)$ stamp $\mu \in \cB(Y) \sm \cB(W)$ such that $|\mu| \leq N$. Under these assumptions, we construct a sliding block code $\gamma: V[*] \to X$ and show that $\pi \circ \gamma$ is injective. Fix a $\pi$-preimage $\Hat{\mu}$ of $\mu$, and let $\ell = |\mu| + 2g$. Using the hypothesis that $X$ is a mixing $1$-step SFT, define maps $\gamma^{\pm}: \cB_1(V) \to \cB_g(X)$ such that, for $a, b \in \cB_1(V)$, we have $\Hat{\mu} \gamma^{-}(a) a, b \gamma^+(b) \Hat{\mu} \in \cB(X)$. We then have a sliding block code $\gamma: V[*] \to X$, given by replacing each block $b *^{\ell} a$ by $b \gamma^+(b) \Hat{\mu} \gamma^-(a) a$, and leaving the non-blank symbols unchanged. 

Let
\[
\xi = \dots *^{\ell} v_{-1} *^{\ell} v_0 *^{\ell} v_1 *^{\ell} \dots \in V[*]
\]
Then
\[
\gamma(\xi) = \dots \Hat{\mu} \gamma^-(a_0) v_0 \gamma^+(b_0) \Hat{\mu} \dots
\]
where $a_i, b_i$ are, respectively, the initial and terminal symbols of $v_i$. In turn, we have
\[
\pi \circ \gamma(\xi) = \dots \mu (\pi \circ \gamma^-(a_0)) \pi(v_0) (\pi \circ \gamma^+(b_0)) \mu \dots
\]
Moreover, by \Cref{stamp-usage} and the lower bound on lengths of blocks in $M$, it follows that $\mu$ appears in $\pi \circ \gamma(\xi)$ only where $\Hat{\mu}$ appears at the same position in $\gamma(\xi)$. By replacing, in $\pi \circ \gamma(\xi)$, each appearance of $\mu$, and the blocks of length $k$ to the left and right of $\mu$, with $*^{\ell}$, we obtain the point $\dots *^{\ell} \pi(v_0) *^{\ell} \dots = \pi[*](\xi) \in \bl(M,R,N,*,\ell)$, from which $\xi$ can be recovered since $\pi[*]$ is a conjugacy.
\end{proof}

\subsection{Blanks and markers}\label{ss-marker-blank}

We now prove a lemma that encapsulates the use of marker constructions in our proof of \Cref{main-thm}.

\begin{lemma}\label{marker-black-box}
Let $Z,W$ be subshifts with $Z$ a $1$-step SFT. Let $N,\ell \geq 1$ be such that $q_n(Z) \leq q_n(W)$ for $n \leq N-1$ and $|\cB_n(Z)| \leq |\cB_{n-\ell}(W)|$ for $N+\ell \leq n \leq 2N+\ell-1$. Let $M \subset \bigcup_{n=N}^{2N-1} \cB_{n}(W)$ and $Q \subset \bigcup_{n=1}^{N-1} Q_n(W)$ be a union of finite (i.e. periodic) orbits such that $|\cB_n(Z)| \leq |M \cap \cB_{n-\ell}(W)|$ for $N+\ell \leq n \leq 2N+\ell-1$, and $q_n(Z) \leq |Q \cap Q_n(W)|$ for $n \leq N-1$. Then $Z$ embeds into $\bl(M,Q,N,*,\ell)$. 
\end{lemma}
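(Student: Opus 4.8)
The plan is to build the embedding $\phi: Z \to \bl(M,Q,N,*,\ell)$ by applying the marker lemma (\Cref{marker-lemma}) to $Z$ to break each point $z \in Z$ into a concatenation of blocks separated by marker coordinates, then recode those blocks into data blocks drawn from $M$ and $Q$, inserting $*^\ell$ between consecutive data blocks. First I would fix the marker set $F \subset Z$ from \Cref{marker-lemma} for the parameter $N$; this partitions $\Z$ (for each $z$) into maximal intervals between consecutive occurrences of $F$ along the orbit of $z$. The key dichotomy is: either the gap between consecutive markers is ``moderate'' (at most $2N-1$, say), in which case the intervening block of $z$ has length in a controlled range and is coded by an element of $M$; or the gap is long, in which case condition (2) of \Cref{marker-lemma} forces $z$ to look $p$-periodic with $p \le N-1$ on that stretch, and \Cref{per-lemma} (this is where we use that $Z$ is a $1$-step SFT) identifies a unique periodic point $\zeta \in Z$ with $\per(\zeta) \le N-1$ agreeing with $z$ there; that $\zeta$ lies in $Q_p(Z)$ for some $p \le N-1$, and we code the long stretch by the corresponding orbit point from $Q$.

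The counting hypotheses are exactly what make the recoding injections exist. For the moderate blocks, the blocks of $Z$ appearing strictly between markers have length $n$ with $N+\ell \le n \le 2N+\ell-1$ after accounting for the length bookkeeping (a block of $Z$ of length $n$ should be coded to a block of $W$ of length $n - \ell$, since we must also account for the $*^\ell$ that will be inserted; here the hypothesis $|\cB_n(Z)| \le |M \cap \cB_{n-\ell}(W)|$ for $N+\ell \le n \le 2N+\ell-1$ furnishes, length by length, an injection from the relevant blocks of $Z$ into $M$). For the long periodic stretches, we need an injection from $Q_p(Z)$ into the orbits in $Q$ of least period $p$ for each $p \le N-1$, which is exactly $q_p(Z) \le |Q \cap Q_p(W)|$; I would choose this injection to be shift-commuting on each orbit so that periodic stretches code consistently regardless of where the marker coordinates fall relative to the period. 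One then has to be careful at the transition coordinates: the data block of $W$ replacing a periodic stretch of $z$ should be $y_T$ for the appropriate interval $T$ (two-sided, one-sided, or finite of length $\ge 2N$), matching the cases in \Cref{blanks-def}, and the moderate/long classification must be a sliding block rule — which it is, because membership in $\sigma^i F$ and $p$-periodicity of windows are both determined by bounded neighborhoods.

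To finish I would check that $\phi$ is well-defined (every $z \in Z$ maps into a point of the stated form: the marker structure and the two coding cases cover all gaps, and the length ranges land in the domains of the injections), shift-commuting (by construction, since $F$ is shift-invariant and the block rules are local), and injective. Injectivity is the main point: from a point $\xi = \dots w_{-1} *^\ell w_0 *^\ell w_1 \dots$ in the image, the $*^\ell$ runs reveal the (images of the) marker coordinates, each $w_i$ is either in $M$ — whence its $Z$-preimage block is recovered via the injection $\kappa$ — or is some $y_T$ with $y \in Q$ — whence the periodic point $\zeta$, hence the corresponding stretch of $z$, is recovered via $\lambda^{-1}$ and \Cref{per-lemma}'s uniqueness clause; gluing these recovered stretches along the recovered marker coordinates reconstructs $z$ uniquely.

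The main obstacle I expect is the bookkeeping at the boundaries between a moderate block and an adjacent long periodic block, and in particular making sure the ``length $\ge 2N$'' requirement in \Cref{blanks-def} (and the ``$|T| \ge 2N+1$'' / ``$m \ge 2N$'' conditions) is met by every long stretch produced — this requires choosing the moderate/long threshold consistently with \Cref{marker-lemma}'s window size and with \Cref{per-lemma}'s hypothesis $b - a \ge 2N$, and verifying that a ``long'' gap really is long enough for \Cref{per-lemma} to apply with room to spare on both ends. A secondary subtlety is handling points $z$ with only finitely many, or no, marker coordinates (e.g. periodic $z$, or $z$ asymptotic to a periodic point): these give rise to the one-sided and two-sided interval cases in \Cref{blanks-def}, and one must confirm the periodic coding is consistent in those degenerate configurations, which is where shift-commutativity of $\lambda$ is essential.
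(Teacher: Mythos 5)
Your proposal is correct and follows essentially the same route as the paper's proof: a marker set from \Cref{marker-lemma} with parameter $N$, the moderate/long dichotomy with length-by-length injections into $M$ and shift-commuting injections $Q_n(Z) \to Q \cap Q_n(W)$ (using \Cref{per-lemma} and the $1$-step SFT hypothesis for the long periodic stretches), blanks $*^\ell$ marking the marker coordinates, and injectivity plus continuity verified exactly as you describe. The boundary and degenerate-interval issues you flag are precisely the ones the paper's proof handles via its case analysis of marker intervals ($[a_j,a_{j+1})$, one-sided, or all of $\Z$), so there is no substantive difference.
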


\begin{rmk}
The lower bound on the length of blocks in $M$ is not in fact needed for \Cref{marker-black-box}, but it is needed in order to apply \Cref{marker-black-box} in conjunction with \Cref{blanks-embed-channel} in the proof of \Cref{main-thm} below. 
\end{rmk}

\begin{proof}
Let $F$ be a marker set for $Z$ with parameter $N$. For $z \in Z$, let $A(z) = \{ i \in \Z  \, | \, \sigma^i z \in F  \}$. Enumerate each $A(z)$ as $\{ a_j(z) \}_{j \in J(z)}$ where the index set $J(z)$ may be the empty set, or a finite set, or the integers, or the positive or negative natural numbers, and where $a_j(z) < a_{j+1}(z)$ for each $j$. We refer to the elements of $A(z)$ as marker coordinates for $z$. Say that $T$ is a \textit{marker interval} for $z$ if: $T = [a_j(z), a_{j+1}(z))$ where $a_j(z), a_{j+1}(z)$ are both defined; or $T=[a_0(z), \infty)$ if $a_0(z) = \max A(z) < \infty$; or $T=(-\infty, a_0(z)]$ if $a_0(z) = \min A(z) > -\infty$; or $T=(-\infty, \infty)$ if $A(z) = \emptyset$.

We construct an embedding of $Z$ into $\bl(M,Q,N,*,\ell)$ by constructing a function $\Phi$ that maps a block occurring between marker coordinates to a data block padded with $*^{\ell}$. Let $c_n: Q_n(Z) \to Q_n(W)$ be shift-commuting injections for $n \leq N-1$. Let $d_n: \cB_n(Z) \to \cB_{n-\ell}(W)$ be injections for $N+\ell \leq n \leq 2N+\ell-1$. For a block $w \in \cB_n(Z)$ with $N+\ell \leq n \leq 2N+\ell-1$, let $\Phi(w) = *^{\ell} d_n(w)$. For $z \in Z$ periodic with $n = \per(z) \leq N-1$, if $m \geq 2N+\ell$, let $\Phi(z_{[0,m]}) = *^{\ell} c_n(z)_{[\ell,m]}$. Similarly, let $\Phi(z_{[0,\infty)}) = *^{\ell} c_n(z)_{[\ell,\infty)}$. Finally, let $\Phi(z_{(-\infty,0]}) = c_n(z)_{(-\infty,0]}$ and let $\Phi(z) = c_n(z)$. Observe that $\Phi$ is injective, by \Cref{per-lemma}.

Define $\phi: Z \to W$ by declaring that $\phi(z)_T = \Phi(z_T)$ whenever $T$ is a marker interval for $z$. We need to show that $\phi$ is an embedding. Certainly $\phi$ is shift-commuting, since, if $T$ is a marker interval for $z$, then $T-1$ is a marker interval for $\sigma z$, so
\[
\phi(\sigma z)_{T-1} = \Phi((\sigma z)_{T-1}) = \Phi(z_T) = \phi(z)_T = (\sigma \phi(z))_{T-1}
\]
Thus indeed $\phi(\sigma z) = \sigma \phi(z)$. Moreover, $\phi$ is injective because the appearances of $\beta$ in $\phi(z)$ allow us to reconstruct the marker coordinates, and then the injectivity of $\Phi$ allows us to reconstruct $z_T$ for each marker interval $T$ for $z$.

We need to show finally that $\phi$ is continuous, i.e. that for $z \in Z$, $\phi(z)_0$ depends only on $z_{[-L,L]}$ for some finite $L$ independent of $z$. To see this, let $L'$ be such that $F$ is a union of cylinders on $[-L',L']$. Let $L = L'+2N$. By examining $z_{[-L,L]}$, we can determine whether there are marker coordinates for $z$ in $[-2N,0)$ and/or $[0,2N]$. If each of these intervals contains a marker coordinate, then $\phi(z)_0$ is determined by $z_T$ where $T \subset [-2N,2N]$ is the unique marker interval for $z$ containing $0$. If at least one of $[-2N,0), [0,2N]$ has no marker coordinates, then $0$ is in a long marker interval for $z$. If there is a marker coordinate in $(-\ell,0]$, then $\phi(z)_0 = *$. Otherwise, by \Cref{per-lemma}, $\phi(z)_0$ is determined by any subblock $z_{[a,b]}$ where $a <  0 \leq b$, $b-a \geq 2N$, and $[a,b]$ contains no marker coordinate for $z$. This concludes the proof that $\phi$ is continuous.
\end{proof}

The remainder of the proof of \Cref{main-thm} follows the following proposition, the proof of which is taken up in \Cref{sec-count}.

\begin{prop}\label{custom-W-exists}
Let $X$ be a mixing SFT with gap $g$, $Y$ a mixing sofic shift, and $\pi: X \to Y$ a factor code. Let $Z$ be a subshift with $h(Z) < h(Y)$ and $q_n(Z) \leq r_n(\pi)$ for every $n \geq 1$. Then there exist: $N \geq 1$, subshifts $V \subset X$, $W = \pi(V) \subset Y$, and a $(Y,W,g)$ stamp $\mu \in \cB(Y) \sm \cB(W)$, such that $|\mu| \leq N$, $q_n(Z) \leq r_n(\pi|_V)$ for $n \leq N-1$ and $|\cB_n(Z)| \leq |\cB_{n-\ell}(W)|$ for $N+\ell \leq n \leq 2N+\ell-1$, where $\ell = |\mu| + 2g$.
\end{prop}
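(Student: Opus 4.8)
The plan is to build $V$ as a union $V = V_{\mathrm{ent}} \cup V_{\mathrm{per}}$, where $V_{\mathrm{ent}} \subset X$ is a mixing SFT carrying the entropy and the periodic points of large period, and $V_{\mathrm{per}}\subset X$ is a finite union of periodic orbits carrying the periodic points of small period; then $W = \pi(V) = \pi(V_{\mathrm{ent}}) \cup \pi(V_{\mathrm{per}})$ and we set $U := \pi(V_{\mathrm{ent}})$. As usual we reduce to the case that $\pi$ is a $1$-block code on a $1$-step SFT (passing to higher block presentations), and we write $2\rho+1$ for its coding window. The division into an ``entropy part'' and a ``finitely many low-period orbits part'' is what lets us break the apparent circularity among $N$, $W$, and $\mu$.

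\emph{Step 1 (the entropy part).} Choose a long word $w_0 \in \cB(Y)$ with $\{y\in Y : w_0 \text{ does not occur in } y\}$ infinite (possible since this set has entropy tending to $h(Y)>0$ as $|w_0|\to\infty$), lift a slight extension of $w_0$ to $X$ and close it up, using mixing of $X$, to a periodic block $a$ with $\pi(a)$ containing $w_0$. Using the $S$-gap-type SFT construction of \Cref{ss-stamp-sft} and \Cref{ss-good-per} -- points of $X$ built from data blocks of length at most $D$ separated by occurrences of (a fixed word built from) $a$ -- produce a mixing SFT $V_{\mathrm{ent}} = V_{\mathrm{ent}}(D)$. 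Every point of $\pi(V_{\mathrm{ent}})$ then contains $\pi(a)$, hence $w_0$, with gaps bounded in terms of $D$, while some point of $Y$ avoids $w_0$ entirely; so $U := \pi(V_{\mathrm{ent}})$ is a proper subshift of $Y$, and therefore $h(U) < h(Y)$. On the other hand the language of $\pi(V_{\mathrm{ent}}(D))$ contains, up to bounded trimming, all $Y$-words of length $\lesssim D$ (lift them to $X$ and use them as data blocks), so $h(\pi(V_{\mathrm{ent}}(D))) \to h(Y)$ as $D\to\infty$; fixing $D$ large, we may assume $h(Z) < h(U) < h(Y)$.

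\emph{Step 2 (periodic points of large period -- the crux).} We claim there is $n_0$, depending only on the data fixed so far, with $q_n(Z) \le r_n(\pi|_{V_{\mathrm{ent}}})$ for all $n \ge n_0$. Fix $\delta>0$ with $h(Z)+\delta < h(U)$, so $q_n(Z) \le |\cB_n(Z)| \le e^{n(h(Z)+\delta)}$ for $n$ large. For each word $w \in \cB_{n-L}(V_{\mathrm{ent}})$ (with $L$ a fixed constant above the mixing gap of $V_{\mathrm{ent}}$) use mixing to extend $w$ to a periodic point $\tilde x(w) = (w f_w)^\infty \in V_{\mathrm{ent}}$ of period $n$; put $\tilde y(w) := \pi(\tilde x(w)) \in U$, a point of least period dividing $n$. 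Distinct trimmed images $\pi(w)$ force distinct $\tilde y(w)$, and the set of such trimmed images is exactly $\cB_{n-L-2\rho}(U)$, so there are at least $e^{(n-L-2\rho)h(U)}$ distinct $\tilde y(w)$. Among them, those of least period strictly below $n$ lie in $U$ and number at most $\sum_{d\mid n,\, d<n} q_d(U) \le n\,e^{(n/2)(h(U)+o(1))} = o\!\left(e^{(n-L-2\rho)h(U)}\right)$. Hence at least $\tfrac12 e^{(n-L-2\rho)h(U)}$ of the $\tilde y(w)$ have least period exactly $n$; each such $\tilde y$ is the image of the period-$n$ point $\tilde x(w)$, and since $\per(\tilde y)=n$ divides $\per(\tilde x(w))$ which divides $n$, in fact $\per(\tilde x(w)) = n = \per(\tilde y)$, so $\tilde y \in R_n(\pi|_{V_{\mathrm{ent}}})$. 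Thus $r_n(\pi|_{V_{\mathrm{ent}}}) \ge \tfrac12 e^{(n-L-2\rho)h(U)} > e^{n(h(Z)+\delta)} \ge q_n(Z)$ for $n$ large, which defines $n_0$.

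\emph{Step 3 (small periods; assembly).} For each $n < n_0$, using $q_n(Z) \le r_n(\pi)$ choose $q_n(Z)$ points of $R_n(\pi)$ together with designated equal-period $X$-preimages, and let $V_{\mathrm{per}}$ be the finite union of the orbits of those preimages. Put $V := V_{\mathrm{ent}} \cup V_{\mathrm{per}}$, $W := \pi(V) = U \cup \pi(V_{\mathrm{per}})$; since $\pi(V_{\mathrm{per}})$ is finite, $h(W) = h(U) \in (h(Z), h(Y))$, so $W \subsetneq Y$ is proper, and $q_n(Z) \le r_n(\pi|_V)$ for every $n\ge 1$ (from $V_{\mathrm{ent}}$ when $n \ge n_0$, from $V_{\mathrm{per}}$ when $n < n_0$). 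Crucially $W$, and hence $n_0$, is independent of $N$ and $\mu$. Apply \Cref{stamp-exist} to the proper subshift $W$ of the strongly irreducible $Y$ with $k=g$ to get a $(Y,W,g)$ stamp $\mu$, of some fixed length; put $\ell = |\mu|+2g$. Finally pick $N$ large enough that $N \ge |\mu|$, $N \ge n_0$, $N + \ell$ exceeds the threshold making $|\cB_m(Z)| \le e^{m(h(Z)+\delta)}$, and $N \ge \ell(h(Z)+\delta)/(h(U)-h(Z)-\delta)$. Then for $N+\ell \le n \le 2N+\ell-1$, writing $m = n-\ell \in [N,2N-1]$, we get $|\cB_n(Z)| \le e^{(m+\ell)(h(Z)+\delta)} \le e^{mh(U)} \le |\cB_m(U)| \le |\cB_{n-\ell}(W)|$. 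All requirements of the proposition hold.

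\emph{Main obstacle.} The hard part is Step 2: producing exponentially many periodic points of $Y$, at rate above $h(Z)$, that lift to periodic points of $X$ of equal least period. The key idea is that one should not try to lift pre-existing periodic points of $Y$ (whose preimages may systematically have larger period) but instead \emph{manufacture} them as images of periodic points built inside $V_{\mathrm{ent}}$, so that an equal-period lift is free once one does the least-period bookkeeping; counting the low-period images \emph{inside $U$} (not inside $Y$) is what keeps the error term negligible regardless of how much entropy $\pi$ collapses. Secondary points are the entropy estimate $h(\pi(V_{\mathrm{ent}}(D)))\to h(Y)$ in Step 1 and the observation in Step 3 that the $n \ge n_0$ / $n < n_0$ split decouples $W$ from the choice of $N$.
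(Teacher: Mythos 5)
Your plan is correct in substance, but it takes a genuinely different route from the paper's. The paper builds $(V,W)$ by: approximating $Y$ from inside by an SFT $W_0$ of nearly full entropy (\Cref{sofic-approx-inside-sft}); pulling back to $V_0=\pi^{-1}(W_0)$ and applying the stamp-gap construction \Cref{gap-sft} via \Cref{enlarge-mixing}, where properness of the image comes from a periodic $y\notin W_0$ all of whose $\pi$-preimages avoid long $V_0$-blocks; invoking \Cref{growth-rn} (proved with the small-self-overlap lemma \Cref{self-overlap}) for $\pi$ restricted to that mixing SFT; and then adjoining $\bigcup_{n\le N_1}R_n(\pi)$ and taking $V=\pi^{-1}(W)$ (\Cref{quant-summary}), with the stamp and $N$ chosen last, just as you do. You replace the first three ingredients: entropy is recovered by inserting arbitrary lifted $Y$-words of length about $D$ between syndetic markers (essentially the computation in \Cref{synd}), properness of $U=\pi(V_{\mathrm{ent}})$ comes from the marker's image $w_0$ being unavoidable in $U$ but avoidable in $Y$ (for which you only need one avoiding point, e.g.\ a periodic one, not the entropy claim you gesture at), and the equal-least-period count is obtained by manufacturing periodic points from $V_{\mathrm{ent}}$-words and subtracting the at most $\sum_{d\mid n,\,d<n}q_d(U)\lesssim e^{(n/2)(h(U)+o(1))}$ images of smaller least period, rather than preselecting words with small self-overlap. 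Both the ``manufacture rather than lift'' idea and the divisor-counting trick are sound (the paper's \Cref{growth-rn} rests on the same manufacture-and-project mechanism), your smaller $V=V_{\mathrm{ent}}\cup V_{\mathrm{per}}$ (instead of $\pi^{-1}(W)$) satisfies everything the statement and its later use require, and your Step 3 bookkeeping and final choice of $N$ are fine.

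The one step you should not discharge by citation is the claim that $V_{\mathrm{ent}}$ is a mixing SFT ``using the construction of \Cref{ss-stamp-sft}''. \Cref{gap-sft} takes data blocks of length \emph{at least} $N$, with no upper bound, drawn from a subshift $V_0$ relative to which the separator is a stamp, and its $V_1$ contains all of $V_0$ in the closure; applied verbatim to your setting (data from all of $X$, or from the subshift avoiding the marker) it would destroy properness, since $\pi(V_0)$ can be all of $Y$, while your essential feature—an upper bound $D$ on data-block length, forcing the marker to occur syndetically—is exactly what that lemma does not treat. So you need a bounded-gap variant: for instance, choose the separator $a'$ with small self-overlap (via \Cref{self-overlap}), so that its occurrences are pairwise disjoint, and define $V_{\mathrm{ent}}$ directly as the set of $x\in X$ in which $a'$ occurs in every window of length $D+2|a'|$; this is manifestly an SFT, it coincides (up to adjusting constants) with the closure of your concatenation points, and mixing follows from a connecting argument in which the data-block length ranges over an interval longer than $|a'|$, so that all sufficiently large transition lengths are realized. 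With that lemma supplied, your proof goes through.
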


\begin{proof}[Proof of \Cref{main-thm}]
By \Cref{wlog-sft}, assume WLOG that $Z$ is a $1$-step SFT. Let $N$, $\ell$, $V \subset X$, $W = \pi(V) \subset Y$, and $\mu$ be as in \Cref{custom-W-exists}. Let $M \subset \bigcup_{n=N}^{2N-1} \cB_{n}(W)$ be as in \Cref{marker-black-box}, and let $R \subset \bigcup_{n=1}^{N-1} R_n(\pi|_V)$ be a union of finite orbits, such that $q_n(Z) \leq |R \cap R_n(\pi|_V)|$ for $n \leq N-1$. Each of which the orbits in $R$ is, by the definition of $R_n$, necessarily the image of an orbit with equal cardinality in $V$. Here, $R$ takes the role that $Q$ plays in \Cref{marker-black-box}, but in \Cref{marker-black-box}, there was no channel $\pi$, and thus no preimage requirement, hence the change in notation. By \Cref{marker-black-box}, let $\phi: Z \to \bl(M,R,N,*,\ell)$ be an embedding. 

Let $\Hat{M}$, $\Hat{R}$ be as in \Cref{blanks-embed-channel}, let $\pi[*] : \bl(\Hat{M},\Hat{R},N,*,\ell) \to \bl(M,R,N,*,\ell)$ be a conjugacy, and let $\gamma: \bl(\Hat{M},\Hat{R},N,*,\ell) \to X$ be an embedding such that $\pi \circ \gamma$ is injective (by \Cref{blanks-embed-channel}, using $\mu$).  Then $\psi = \gamma \circ (\pi[*])^{-1} \circ \phi: Z \to X$ is a sliding block code such that $\pi \circ \psi$ is injective.
\end{proof}

\subsection{Stamps and SFTs}\label{ss-stamp-sft}

In this subsection, we prove \Cref{enlarge-mixing}, which, in conjunction with \Cref{sofic-approx-inside-sft}, allows us, in \Cref{quant-summary}, to construct a mixing SFT $V \subset X$ such that the image $\pi(V) \subset Y$ is a proper subshift of $Y$ but has entropy at least $h(Y) - \eps$ for a given $\eps > 0$. It may be possible to give a more efficient construction of such a $V$, but we have not found one. We first prove \Cref{gap-sft}, which is related to the characterization of SFTs among $S$-gap shifts (Theorem 3.3 in \cite{dad-sj-2012}).

\begin{lemma}\label{gap-sft}
Let $X$ be a mixing SFT with gap $g$ and let $V_0 \subset X$ be an SFT. Let $k \geq g$ and let $\mu \in \cB(X) \sm \cB(V_0)$ be an $(X,V_0,k)$ stamp. Let $N \geq |\mu|$ and let $V_1 \subset X$ be the closure of the set of points of the form
\[
\dots v_{-1} \gamma^+_{-1} \mu \gamma^-_0 v_0 \gamma^+_0 \mu \gamma^-_1 v_1 \dots \in X
\]
where, for each $i$, $\gamma^{\pm}_i \in \cB_k(X)$ and $v_i \in \cB(V_0)$ with $|v_i| \geq N$. Then $V_1$ is a mixing SFT.
\end{lemma}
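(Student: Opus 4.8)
The plan is to present $V_1$ as $\sfX_{\cF}$ for an explicit finite set $\cF$, and then to verify irreducibility and aperiodicity directly. Write $S$ for the (non-closed) set of points of the displayed form, so $V_1=\overline S$; since cylinders are clopen, $\cB(V_1)=\cB(S)$, i.e.\ every block of $V_1$ already occurs in a point of $S$. Everything rests on the following structural fact: in any $x\in S$, the occurrences of $\mu$ are exactly the ``marked'' ones appearing in the displayed decomposition. Indeed, applying \Cref{stamp-usage} to each subblock $\mu\gamma^- v\gamma^+\mu$ (with $v\in\cB(V_0)$, $|v|\ge N\ge|\mu|$, $\gamma^{\pm}\in\cB_k(X)$) shows that no occurrence of $\mu$ lies between, or overlaps, two consecutive marked ones. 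Since $\mu$ and the $\gamma^{\pm}$ have fixed lengths $|\mu|$ and $k$, the occurrence-positions of $\mu$ in $x$ therefore determine the entire decomposition; consecutive marked occurrences start at distance $\ge|\mu|+2k+N$; and the complement of the ``connector territory'' $\bigcup_{p}[\,p-k,\ p+|\mu|-1+k\,]$ (union over occurrence-starts $p$) breaks into maximal runs, each carrying a word of $V_0$ of length $\ge N$. The key point for finite type is that, because $X$ and $V_0$ (say, $s$-step) are SFTs, both the membership of a coordinate in connector territory and whether a word of length $\ge s+1$ lies in $\cB(V_0)$ are decided window by window.

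Accordingly I would take $\cF$ to consist of: (i) a finite defining set for the SFT $X$; (ii) all words of length $\le 2|\mu|+2k+N$ containing two occurrences of $\mu$ at distance $<|\mu|+2k+N$; (iii) all words of a fixed length (depending only on $|\mu|,k,s$) in which the central length-$(s{+}1)$ factor is ``seen'', from the word itself, to be disjoint from connector territory yet fails to lie in $\cB_{s+1}(V_0)$; and (iv) if $N\le s$, finitely many further words forbidding a \emph{maximal} connector-free run of some length $j$ with $N\le j\le s$ whose content lies outside $\cB_j(V_0)$ --- such short runs being the only ones not already controlled by (iii). By the structural fact, $S\subseteq\sfX_{\cF}$, and hence $V_1=\overline S\subseteq\sfX_{\cF}$ since $\sfX_{\cF}$ is closed. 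For the reverse inclusion, take $x\in\sfX_{\cF}$: by (ii) its occurrences of $\mu$ are pairwise far apart, hence delimit connector territory as above, so every finite maximal connector-free run of $x$ has length $\ge N$; and by (iii)--(iv) each maximal connector-free run carries a legal word of $V_0$. If $\mu$ occurs infinitely often in both directions, $x$ is literally of the displayed form, so $x\in S$. If $\mu$ occurs only finitely often --- in particular never --- then one realizes $x$ as a limit of points of $S$ by ``capping off'', far out on the side(s) where the $\mu$'s run out, the corresponding ray of connector-free territory, using strong irreducibility of $X$ (gap $k\ge g$) to append a legal tail of the displayed form. (The case with no $\mu$ also uses $V_0\subseteq V_1$, proved below.) Hence $x\in\overline S=V_1$, so $V_1=\sfX_{\cF}$ is an SFT.

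Finally, a mixing SFT is precisely an irreducible, aperiodic SFT. For irreducibility, given $u,w\in\cB(V_1)=\cB(S)$, extend $u$ rightward inside a point of $S$ until it ends at the end of an occurrence of $\mu$, extend $w$ leftward until it begins at the start of an occurrence of $\mu$, and splice in $\gamma^- v_0\gamma^+$, where $v_0\in\cB(V_0)$ has length at least $N$ (and at least one more than the step of $X$) and $\gamma^{\pm}\in\cB_k(X)$ are chosen, by strong irreducibility of $X$ with gap $k\ge g$, to make the concatenation legal; the result has the displayed form and extends to a point of $S$. For aperiodicity, observe that for any $v\in\cB(V_0)$ of sufficiently large length one can choose $\gamma^{\pm}\in\cB_k(X)$ with $(\gamma^- v\gamma^+\mu)^{\infty}\in X$ --- e.g.\ in the $1$-step presentation of $X$, pick $\gamma^-,\gamma^+$ to realize the two required transitions, which exist since $k\ge g$ furnishes paths of length $k+1$ between any two states --- and this point lies in $S$ with period dividing $|\mu|+2k+|v|$. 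Running this for two consecutive admissible values of $|v|$ yields periodic points of coprime periods, so the period of the irreducible SFT $V_1$ is $1$, i.e.\ $V_1$ is aperiodic, hence mixing. (The same periodic points, suitably translated, also witness $V_0\subseteq V_1$, as used above; throughout we assume $V_0\ne\emptyset$, the degenerate case $V_0=\emptyset$ yielding $V_1=\emptyset$.) I expect the main obstacle to be the first half --- showing that $\cF$ cuts out exactly $V_1$ --- the delicate part being the bookkeeping for the closure points of $S$, i.e.\ those with only finitely many (or no) occurrences of $\mu$, and checking that precisely these are adjoined.
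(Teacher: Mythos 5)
Your proposal is correct and follows essentially the same strategy as the paper's proof: local recognizability of the stamp (via \Cref{stamp-usage}) yields a finite-type presentation of $V_1$, irreducibility comes from splicing two points of $V_1$ through a connector--$V_0$-block--connector bridge, and mixing comes from periodic points $(\gamma^-v\gamma^+\mu)^{\infty}$ with consecutive values of $|v|$, hence coprime periods. The only differences are presentational: the paper first recodes $X$ to a conjugate shift whose symbols mark stamp neighbourhoods, whereas you detect occurrences of $\mu$ directly in bounded windows, and you spell out the closure points with finitely many (or no) occurrences of $\mu$, which the paper's proof leaves implicit.
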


\begin{proof}
Assume without loss of generality that $X$ is a $1$-step SFT. We first perform a small recoding for convenience, specifically to make it easier to recognize stamps, by replacing $X$ by a conjugate shift $\hat{X}$. For each $x \in X$, define $\hat{x}$ as follows: if $x_{[i,i+|\mu|)} = \mu$, then for each $i \in [-k, |\mu|+k)$, let $a = x_i$ and let $\hat{x}_i = \hat{a}$, where for symbols $a,b$ in the alphabet of $X$, we have $\hat{a} = \hat{b}$ if and only if $a=b$, and the set of symbols with hats is disjoint from the alphabet of $X$. If there is no $j \in (i-(|\mu|+k), i+k]$ with $x_{[j,j+|\mu|)} = \mu$, then let $\hat{x}_i = x_i$. Clearly the map $x \mapsto \hat{x}$ is a sliding block code, and it is just as clearly injective, since we recover $x$ from $\hat{x}$ by dropping hats. Therefore $\hat{X} = \{ \hat{x} \, | \, x \in X \}$ is a mixing SFT, conjugate to $X$. 

Denote by $\Hat{V}_1 \subset \Hat{X}$ the image of $V_1$ under the map $x \mapsto \hat{x}$. Let $\ell = |\mu| + 2k$. Since $\mu$ is an $(X,V_0,k)$ stamp, and $N \geq |\mu|$, blocks of the form $\gamma^+_i \mu \gamma^-_{i+1}$ do not overlap in any point in $V$ by \Cref{stamp-usage}, so symbols with hats occur in $\Hat{V}_1$ in blocks of length exactly $\ell$. The blocks of symbols with hats are separated by blocks from $V_0$. Since $\Hat{V}_1$ is the image of $V$ under a conjugacy $X \to \Hat{X}$, $V_1$ is an SFT if and only if $\Hat{V}_1$ is an SFT.
 
Let $m \geq N$ be such that $\hat{X}$ and $V_0$ are $m$-step SFTs. We claim that if $\Hat{x} \in \hat{X}$ is such that $x_{[i,i+m]} \in \cB_{m+1}(\Hat{V}_1)$ for all $i \in \Z$, then $\hat{x} \in \Hat{V}_1$, which means precisely that $V$ is an $m$-step SFT. To prove this claim, let $\cF \subset \cB_{m+1}(X)$ be the set of blocks of length $m+1$ which contain at least one of the following: a block of length greater than $\ell$ in which all symbols have hats; a block without hats that is not in $\cB(V_0)$; or a block of symbols without hats, of length less than $N$, bounded on both sides by symbols with hats. Note that $\cF$ is disjoint from $\cB_{m+1}(\Hat{V}_1)$. Suppose that $\hat{x}_{[i,i+m]} \notin \cF$ for all $i \in \Z$. Then any block of symbols with hats in $\hat{x}$ has length exactly $\ell$, and is thus of the form $\gamma^+ \mu \gamma^-$, where $\gamma^{\pm} \in \cB_g(X)$ (with hats added). Furthermore, the blocks separating the blocks with hats must have length at least $N$ and must be in $\cB(V_0)$, since every subblock of length $m+1$ is in $\cB(V_0)$ and $V_0$ is an $m$-step SFT. Thus indeed $\hat{x} \in \hat{V}_1$, so $\Hat{V}_1$ is indeed an SFT.

To see that $V_1$ is irreducible, let $u_-, u_+ \in \cB(V_1)$. We need to construct $u_0 \in \cB(V)$ such that $u_- u_0 u_+ \in \cB(V_1)$. We do so as follows. Extend $u_-$ on the right to form a block $v_- \in \cB(V_1)$, which begins with $u_-$ and ends with $\gamma^+_{-1} \mu \gamma^-_0$ where $\gamma^+_{-1}, \gamma^-_0 \in \cB_k(V_1)$. (It is possible that $u_-$ overlaps $\gamma^+_{-1} \mu \gamma^-_0$.) Let $v_0 \in \cB_N(V_0)$ be such that $v_- v_0 \in \cB(X)$. Similarly, extend $u_+$ on the left to form a block $v_+ \in \cB(V_1)$ which ends with $u_+$ and begins with $\gamma^+_0 \mu \gamma^-_1$, where $\gamma^+_0, \gamma^-_1 \in \cB_k(V_1)$ and $v_0 \gamma^+_0 \in \cB(X)$. Let $x^{\pm} \in \cB(V_1)$ be such that $x^-_{[0,\infty)}$ begins with $v_-$ and $x^+_{(-\infty,-1]}$ ends with $v^+$. Let $x = x^-_{(-\infty,-1]} v_- v_0 v_+ x^+_{[0,\infty)}$. Then $x \in X$ since $X$ is a $1$-step SFT. Moreover, $x \in V_1$, since the tails $x^-_{(-\infty,-1]} v_-$ and $v_+ x^+_{[0,\infty)}$ appear in $V_1$ and are joined together in a way that creates no violations of the restrictions defining $V_1$. Letting $u_0$ be the block appearing between $u_-, u_+$, such that $v_- v_0 v_+ = u_- u_0 u_+ \in \cB(V)$, the construction is complete, showing that $V_1$ is indeed irreducible.

To see that $V_1$ is mixing, let $u_1, u_2 \in \cB(V_0)$ with $|u_1| > m$, where $m$ is as above, and $|u_2| = |u_1| + 1$. Let $\gamma^{\pm}_i \in \cB(X)$, $i = 1,2$, be such that $u_i \gamma^+_i \mu \gamma^-_i u_i \in \cB(X)$. Then $x_i = (u_i \gamma^+_i \mu \gamma^-_i)^{\infty} \in V_1$ for both $i=1,2$. Indeed, certainly $x_i \in X$, since $u_i \gamma^+_i \mu \gamma^-_i u_i \in \cB(X)$ and $X$ is a $1$-step SFT. Moreover, $\per(x_i)$ divides $\ell + |u_i|$, and $\gcd(\ell + |u_1|, \ell + |u_2|) = \gcd(\ell + |u_1|, \ell + |u_1| + 1) = 1$, so $\gcd( \per(x_1), \per(x_2) ) = 1$. Since $V_1$ is an irreducible SFT with periodic points of coprime periods, $V_1$ is mixing.
\end{proof}

As advertised, we now use \Cref{gap-sft} to prove the following lemma, which is applied in the proof of \Cref{quant-summary}, which in turn is the main input to the proof of \Cref{custom-W-exists}.

\begin{lemma}\label{enlarge-mixing}
Let $X$ be a mixing SFT, $Y$ a mixing sofic shift, and $\pi: X \to Y$ a factor code. Let $W_0 \sn Y$ be an SFT. Then there exists a mixing SFT $V_1 \subset X$ with $W_0 \subset \pi(V_1) \sn Y$.
\end{lemma}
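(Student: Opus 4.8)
The plan is to lift $W_0$ to a proper SFT sitting inside $X$ whose $\pi$-image is still a proper subshift of $Y$, and then to enlarge that SFT to a mixing one by sparsely inserting stamps and invoking \Cref{gap-sft}; the $\pi$-image stays proper because sparsely occurring stamps cost a negligible amount of entropy.

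First I would reduce, as usual, to the case where $X$ is a $1$-step SFT and $\pi$ is a $1$-block code (replacing $X$ by a high enough higher block presentation; a mixing SFT with the desired properties upstairs maps to one downstairs under the conjugacy), and fix a gap $g$ for $X$. Since $\pi$ is surjective and $W_0 \sn Y$, the preimage $\pi^{-1}(W_0) = \{x \in X : \pi(x) \in W_0\}$ is a proper subshift of $X$ with $\pi(\pi^{-1}(W_0)) = W_0$. I would take $V_0$ to be a high Markov approximation of $\pi^{-1}(W_0)$: these approximations are nested with intersection $\pi^{-1}(W_0)$, so a routine compactness argument gives $\bigcap_n \pi\big((\pi^{-1}(W_0))_n\big) = \pi(\pi^{-1}(W_0)) = W_0 \sn Y$, whence for $n$ large enough $V_0 := (\pi^{-1}(W_0))_n$ satisfies $\pi(V_0) \sn Y$, and then automatically $V_0 \sn X$ since $\pi(X) = Y$. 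Fix such an $n$; by construction $W_0 \subseteq \pi(V_0)$, and since $\pi(V_0)$ is a proper subshift of the irreducible sofic shift $Y$ we get the key quantitative fact $h(\pi(V_0)) < h(Y)$.

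Next I would apply \Cref{stamp-exist} with $X$ playing the role of ``$Y$'', the proper subshift $V_0$ playing the role of ``$W$'', and $k = g$, to obtain an $(X,V_0,g)$ stamp $\mu$; set $\ell = |\mu| + 2g$. For any $N \geq |\mu|$, \Cref{gap-sft} then produces a mixing SFT $V_1 \subseteq X$, namely the closure of the set of points $\dots v_{-1}\gamma^+_{-1}\mu\gamma^-_0 v_0\gamma^+_0\mu\gamma^-_1 v_1\dots$ with $\gamma^{\pm}_i \in \cB_g(X)$ and $v_i \in \cB(V_0)$ of length $\geq N$; I would fix $N$ only at the very end. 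To see $W_0 \subseteq \pi(V_1)$ it suffices to check $V_0 \subseteq V_1$: given $\zeta \in V_0$ and large $m$, two applications of strong irreducibility of $X$ yield $\gamma^{\pm} \in \cB_g(X)$ with $\zeta_{[-m,m]}\gamma^+\mu\gamma^-\zeta_{[-m,m]} \in \cB(X)$, so the periodic point $(\zeta_{[-m,m]}\gamma^+\mu\gamma^-)^{\infty}$ lies in $V_1$ and agrees with $\zeta$ on $[-m,m]$; letting $m \to \infty$ gives $\zeta \in V_1$, hence $\pi(V_1) \supseteq \pi(V_0) \supseteq W_0$.

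The last and hardest step is to arrange $\pi(V_1) \sn Y$ by choosing $N$ large. Since $\pi$ is a $1$-block code, every point of $\pi(V_1)$ is an alternating concatenation of blocks from $\cB(\pi(V_0))$ of length $\geq N$ with blocks $\pi(\gamma^+)\pi(\mu)\pi(\gamma^-)$ of length exactly $\ell$ (at most $|\cB_g(X)|^2$ of the latter). Counting the length-$n$ blocks of $\pi(V_1)$ by choosing the number $j \leq n/N$ of stamp units, their positions, the accompanying $\gamma$'s, and the $\pi(V_0)$-content, I would get a bound $|\cB_n(\pi(V_1))| \leq |\cB_n(\pi(V_0))| \cdot e^{n\eps(N)}$ in which $\eps(N)$ depends only on $g$ and tends to $0$ as $N \to \infty$; hence $h(\pi(V_1)) \leq h(\pi(V_0)) + \eps(N)$. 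Since $h(\pi(V_0)) < h(Y)$ is a fixed positive gap below $h(Y)$, taking $N$ large (and $\geq |\mu|$) forces $h(\pi(V_1)) < h(Y)$, so $\pi(V_1) \neq Y$, i.e. $\pi(V_1) \sn Y$. I expect the entropy estimate of this last step to be the main obstacle: the intuition ``rarely occurring markers cost negligible entropy'' is standard, but it must be made quantitative with the stamp length and the gap $N$ played off against each other; the initial compactness step needed to make $\pi(V_0)$ proper in the first place is the only other point requiring a little care.
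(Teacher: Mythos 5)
Your construction of $V_1$ (lift $W_0$ to $V_0\subset X$, take a stamp, apply \Cref{gap-sft}) is the paper's, and your verification of $V_0\subseteq V_1$ (hence $W_0\subseteq\pi(V_1)$) is correct and in fact more explicit than the paper's. Where you genuinely diverge is the properness of $\pi(V_1)$: you argue by entropy, pushing $N\to\infty$ so that $h(\pi(V_1))<h(Y)$, while the paper exhibits a concrete point missed by $\pi(V_1)$. It picks a periodic $y\in Y\sm W_0$ of least period $k$ and $k'$ with $y_{[0,k')}\notin\cB(W_0)$; then every block of length $\ell=k+k'$ in $y$ is forbidden in $W_0$, hence every $\ell$-block of every $\pi$-preimage of $y$ is forbidden in $V_0=\pi^{-1}(W_0)$, whereas every point of $V_1$ (with minimum data-block length $\ell$) contains $\ell$-blocks of $V_0$; so $y\notin\pi(V_1)$. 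That argument needs no entropy estimate and no limit in $N$, and it also makes your preliminary Markov-approximation/compactness step unnecessary: since $W_0$ is an SFT and $\pi$ may be taken $1$-block, $\pi^{-1}(W_0)$ is already an SFT with image exactly $W_0\sn Y$.

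The entropy step, as you state it, has a real gap. A length-$n$ block of $\pi(V_1)$ has its ``$\pi(V_0)$-content'' spread over up to roughly $n/N$ segments of lengths $m_1,\dots,m_j$ with $\sum_i m_i\le n$, so your count produces $\prod_i|\cB_{m_i}(\pi(V_0))|$, and such a product is in general at least, not at most, $|\cB_n(\pi(V_0))|$; the asserted bound $|\cB_n(\pi(V_1))|\le|\cB_n(\pi(V_0))|e^{n\eps(N)}$ does not follow unless you also know something like $|\cB_m(\pi(V_0))|\le\mathrm{poly}(m)\,e^{m h(\pi(V_0))}$ uniformly in $m$ ($\pi(V_0)$ is sofic but possibly reducible, so this itself needs an argument, e.g.\ a finite-to-one right-resolving cover, and the polynomial factors must then be controlled when raised to the power $n/N$). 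Moreover $\eps(N)$ cannot depend only on $g$: it depends at least on $|\cB_g(X)|$, $|\mu|$, and on $\pi(V_0)$. The clean repair is the device the paper uses in \Cref{gap-sft-entropy}: because consecutive stamps are separated by at least $N$ symbols, a window of length exactly $N$ meets at most one (possibly partial) block $\gamma^+\mu\gamma^-$, so $|\cB_N(\pi(V_1))|\le N\,|\cB_g(X)|^2\max_{m}|\cB_{m}(\pi(V_0))|\,|\cB_{N-m}(\pi(V_0))|$, and combining $h=\inf_n\frac1n\log|\cB_n|$ with $\frac1m\log|\cB_m(\pi(V_0))|\to h(\pi(V_0))$ (splitting into short and long $m$, valid for an arbitrary subshift) yields $h(\pi(V_1))\le h(\pi(V_0))+\eps$ for suitable $N$. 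Note you must run this estimate downstairs, as you intended: $h(V_0)=h(\pi^{-1}(W_0))$ may exceed $h(Y)$, so \Cref{gap-sft-entropy} itself cannot simply be cited. With that repair your proof is correct, though noticeably heavier than the paper's periodic-point argument.
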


\begin{proof}
Let $V_0 = \pi^{-1}(W_0) \subset X$. Note that $V_0$ is an SFT since $W_0$ is an SFT. Let $g$ be the mixing gap of $X$. Let $y \in Y \sm W_0$ be a periodic point with least period $k \geq g$. Such a $y$ certainly exists because periodic points are dense in $Y$ and $W_0$ is a proper subshift. Let $k'$ be such that $y_{[0,k')} \notin \cB_{k'}(W_0)$. Let $\ell = k+k'$. Then every $\ell$-block in $y$ is forbidden in $W_0$. In particular, for any $x \in \pi^{-1}(\{ y \})$ and any $i \in \Z$, we have $x_{[i,i+\ell)} \notin \cB(V_0)$. 

By \Cref{stamp-exist}, let $\mu$ be an $(X,V_0,g)$ stamp. Let $V_1$ consist of the closure of the set of points of the form $\dots v_{-1} \gamma^+_{-1} \mu \gamma^-_0 v_0 \gamma^+_0 \mu \gamma^-_1 v_1 \dots \in X$ where each $v_i \in \cB(V_0)$ with $|v_i| \geq \ell$ and each $\gamma^{\pm}_i \in \cB_g(X)$. By \Cref{gap-sft}, $V_1$ is indeed a mixing SFT. Note that every point in $V_1$ contains $\ell$-blocks permitted in $V_0$, so $V_1$ is disjoint from $\pi^{-1}(\{ y \})$, and therefore $\pi(V_1) \sn Y$. 
\end{proof}

\section{Counting}\label{sec-count}

In this section, we prove \Cref{stamp-exist} and \Cref{custom-W-exists}, which state the existence and properties respectively of the stamps and the shifts $V \subset X, W \subset Y$ used in \Cref{sec-code}. \Cref{ss-overlap} contains two results required for the proof of \Cref{stamp-exist}, one (\Cref{self-overlap}) showing that most blocks in a subshift with positive entropy have little self-overlap, and the other (\Cref{synd}) showing that one can assume, at the cost of a small loss of entropy, that a given sufficiently long block appears syndetically in a mixing sofic shift. \Cref{ss-good-per} then gives a crucial asymptotic result on the number of periodic points in $Y$ with a preimage of equal least period in $X$, and applies the results from \Cref{ss-stamp-sft} to construct the shifts $V$ and $W$.

\subsection{Self-overlap and stamps}\label{ss-overlap}

We begin by showing that most blocks have very little self-overlap, which we use both to construct stamps and to determine the asymptotic number of periodic points in $Y$ with a $\pi$-preimage of equal least period.

\begin{lemma}\label{self-overlap}
Let $Y$ be a subshift with $h(Y) > 0$. For every $\alpha \in (0,1)$, there exist $N \geq 1$ and $b > 0$ such that for every $n \geq N$, there are at least $(1-\exp(-bn)) \exp(n h(Y))$ blocks $w \in \cB_n(Y)$ with no self-overlap of more than $\alpha n$. 
\end{lemma}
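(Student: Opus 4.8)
The plan is to estimate the number of \emph{bad} blocks in $\cB_n(Y)$ --- those with a self-overlap of more than $\alpha n$ --- and to show this number is at most $\exp(-bn)\exp(nh(Y))$ for a suitable constant $b>0$ and all sufficiently large $n$. Since $\frac{1}{n}\log|\cB_n(Y)|$ decreases to $h(Y)$, we have $|\cB_n(Y)| \geq \exp(nh(Y))$ for every $n$; subtracting the bad blocks from $\cB_n(Y)$ then leaves at least $\exp(nh(Y)) - \exp(-bn)\exp(nh(Y)) = (1-\exp(-bn))\exp(nh(Y))$ good blocks, which is exactly the claim.

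The first step is a pair of elementary reductions. If $w \in \cB_n(Y)$ is $k$-periodic for some $k < n$, then $w_i = w_{i+k}$ for all $1 \leq i \leq n-k$, so $w$ is determined by its prefix $w_{[1,k]} \in \cB_k(Y)$; hence there are at most $|\cB_k(Y)|$ many $k$-periodic blocks in $\cB_n(Y)$. Moreover, a block has a self-overlap of more than $\alpha n$ precisely when it is $k$-periodic for some integer $k < (1-\alpha)n$. Therefore the number of bad blocks in $\cB_n(Y)$ is at most $\sum_{k=1}^{\lfloor (1-\alpha)n \rfloor} |\cB_k(Y)|$.

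To bound this sum, I would fix a small $\delta > 0$, to be specified, and choose $N_\delta$ so that $|\cB_k(Y)| \leq \exp(k(h(Y)+\delta))$ for all $k \geq N_\delta$, which is possible by the definition of entropy. The terms with $k < N_\delta$ contribute a constant $C_1$ (each is at most $|\cA|^k$), while, since $h(Y)+\delta > 0$, the remaining terms form a geometric sum dominated, up to a constant factor $C_2$, by its largest term $\exp((1-\alpha)n(h(Y)+\delta))$. So the number of bad blocks is at most $C_1 + C_2\exp((1-\alpha)n(h(Y)+\delta))$.

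It remains to choose parameters. Because $\alpha h(Y) > 0$, we may pick $\delta$ small enough that $\alpha h(Y) - (1-\alpha)\delta > 0$, and then pick $b$ with $0 < b < \alpha h(Y) - (1-\alpha)\delta$; this is exactly the condition $(1-\alpha)(h(Y)+\delta) < h(Y) - b$. Then for all sufficiently large $n$ --- this defines the threshold $N$ --- the constant $C_1$ is absorbed and $C_1 + C_2\exp((1-\alpha)n(h(Y)+\delta)) \leq \exp(n(h(Y)-b)) = \exp(-bn)\exp(nh(Y))$, completing the proof. I do not expect a genuine obstacle; the only points needing care are the geometric-sum estimate --- which really uses $h(Y) > 0$, so that the common ratio stays bounded away from $1$ and the sum is controlled by its top term rather than by the number of terms --- and the bookkeeping that turns ``self-overlap more than $\alpha n$'' into ``$k$-periodic for some $k < (1-\alpha)n$''. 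Note that $h(Y) > 0$ also supplies the slack $\alpha h(Y)$ that makes $b$ strictly positive.
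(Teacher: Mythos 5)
Your proof is correct and takes essentially the same route as the paper's: bound the number of bad blocks by $\sum_k |\cB_k(Y)|$ over the possible periods $k$ (each $k$-periodic block being determined by its length-$k$ prefix), estimate that sum by a constant plus a geometric series via $|\cB_k(Y)|\le\exp(k(h(Y)+\delta))$ for large $k$, and compare exponential rates against $|\cB_n(Y)|\ge\exp(nh(Y))$. The only divergence is bookkeeping: you sum over $k<(1-\alpha)n$, which is the range consistent with the paper's definition (a $k$-periodic block of length $n$ has self-overlap $n-k$) and with how the lemma is later applied, whereas the paper's write-up sums to $\lceil\alpha n\rceil$ and tailors its $\eps$ to that range; your requirement $(1-\alpha)(h(Y)+\delta)<h(Y)-b$ is exactly the right condition for the range you use.
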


\begin{proof}
Let $\eps = \frac{1}{2} (\alpha^{-1} -1 )h(Y)$, so that $\alpha (h(Y) + \eps) < h(Y)$. Let $r = \exp(h(Y))$ and $s = \exp(h(Y) + \eps)$. Note that $s^{\alpha} < r < s$ and that $r^n \leq |\cB_n(Y)|$ for every $n$. Let $N_0$ be large enough that for all $n \geq N_0$, we have $|\cB_n(Y)| \leq s^n$. Let $C_1 = \sum_{k=1}^{N_0 -1} |\cB_k(Y)|$. Then the number of blocks in $X$ of length $n$ with self-overlap of more than $\alpha n$ is at most 
\begin{align*}
    \sum_{k=1}^{\lceil \alpha n \rceil } |\cB_k(Y)| &\leq \sum_{k=1}^{N_0 -1} |\cB_k(Y)| + \sum_{k=N_0}^{\lceil \alpha n \rceil } |\cB_k(Y)| \\
    &\leq C_1 + \sum_{k=N_0}^{\lceil \alpha n \rceil } s^n \\
    &\leq C_1 + \frac{ s^{\alpha n + 2} - s^{N_0} }{s-1} \\
    &\leq C_2 s^{\alpha n}
\end{align*}
where
\[
C_2 = C_1 + \frac{s^2}{s-1}
\]

Let $N > \frac{ (1-\alpha)h(Y) - \alpha \eps }{\log C_2}$. Then, for $n \geq N$, the number of blocks in $Y$ of length $n$ with no self-overlap by more than $\alpha n$ is at least 
\begin{align*}
    |\cB_n(Y)| -  \sum_{k=1}^{\lceil \alpha n \rceil } |\cB_k(Y)| &\geq r^n - C_2  s^{\alpha n} \\
    &= r^n \left( 1 - C_2 \left( \frac{s^{\alpha}}{r} \right)^n     \right) \\
    &> (1- \exp(-bn)) \exp(nh(Y))
\end{align*}
where we can take
\begin{align*}
    b &= \frac{1}{2} \log \left(  C_2 \left( \frac{r}{s^{\alpha}} \right)^N    \right) \\
    &= (1-\alpha)h(Y) - \alpha \eps - \frac{1}{N} \log C_2
\end{align*}
which is positive by the choice of $N$.
\end{proof}

We now control the entropy loss incurred by requiring a given long block to appear syndetically.

\begin{lemma}\label{synd}
Let $Y$ be a strongly irreducible subshift with $h(Y) > 0$. For every $\eps > 0$, there exist $\beta \in (0,1)$ and $N \geq 1$ such that for every $n \geq N$ and every $\theta \in \cB_{\lfloor \beta n \rfloor}(Y)$, the subshift $S \subset Y$ consisting of points $y \in Y$ in which $\theta$ appears at least once in $y_{[i,i+n)}$ for every $i \in \Z$ has entropy at least $h(Y) - \eps$.
\end{lemma}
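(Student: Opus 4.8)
The plan is to produce, inside $S$, an explicit subshift $S'$ of entropy close to $h(Y)$, by placing copies of $\theta$ along a fixed arithmetic progression in $\Z$ and leaving the long stretches between consecutive copies free to be filled by arbitrary blocks of $Y$; one then lower-bounds $h(S')$ by counting these fillings. We may assume $\eps < h(Y)$, since proving the statement for $\eps' := \min(\eps, h(Y)/2)$ yields it for $\eps$ with the same $\beta$ and $N$. Set $\beta := \eps/(4 h(Y)) \in (0,1)$, and let $g$ be the strong irreducibility gap of $Y$. Fix $n$ (large, to be constrained below), write $m := \lfloor \beta n \rfloor$, and let $\theta \in \cB_m(Y)$. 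The key elementary observation is that if $\theta$ occurs in a point $y \in Y$ at every position of an arithmetic progression with common difference $n - m$, then $\theta$ occurs inside every window $y_{[i,i+n)}$: such a window admits a length-$m$ occurrence of $\theta$ starting at any of the $n - m + 1$ integers in $[i, i+n-m]$, and a progression with common difference $n-m$ meets every such interval.

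Accordingly, take $n$ large enough that $f := n - 2m - 2g \geq 1$, and let $S'$ be the closure of the set of points of $Y$ formed as bi-infinite concatenations of units of length $n - m$, each unit of the form $\theta\, c\, u\, d$ with $c, d \in \cB_g(Y)$ and $u \in \cB_f(Y)$ arbitrary, where the gap blocks $c, d$ are produced by applying strong irreducibility iteratively along $\Z$ so that the concatenation is a legal point of $Y$. (The standard fact being used here: in a strongly irreducible subshift, any bi-infinite sequence of blocks can be separated by gap blocks of length $g$ to form a point.) In such a point the copies of $\theta$ occupy the progression $\{k(n-m) : k \in \Z\}$ (up to a shift), so $S' \subseteq S$, and $S' \neq \emptyset$.

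To bound $h(S')$ from below, fix $N_0 \geq 1$ and set $L := N_0(n-m)$. The positions of the $u$-blocks within a length-$L$ window are determined by the unit structure, so the map sending a tuple $(u_0, \dots, u_{N_0-1}) \in \cB_f(Y)^{N_0}$ to the corresponding block of $S'$ is injective, and every tuple does extend to a point of $S'$ by strong irreducibility; hence $|\cB_L(S')| \geq |\cB_f(Y)|^{N_0}$. Letting $N_0 \to \infty$ along these $L$ gives
\[
h(S') \;\geq\; \frac{\log|\cB_f(Y)|}{n-m} \;\geq\; \frac{f\, h(Y)}{n} \;=\; \Bigl(1 - \tfrac{2m}{n} - \tfrac{2g}{n}\Bigr) h(Y) \;\geq\; \Bigl(1 - 2\beta - \tfrac{2g}{n}\Bigr) h(Y),
\]
where the second inequality uses the characterization $h(Y) = \inf_k \tfrac1k \log|\cB_k(Y)|$ recalled above, which forces $|\cB_k(Y)| \geq e^{k h(Y)}$ for every $k$. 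Now choose $N$ large enough that $N \geq 4 g h(Y)/\eps$ and that $f \geq 1$ for all $n \geq N$; then for $n \geq N$ we get $h(S) \geq h(S') \geq (1 - 2\beta) h(Y) - \tfrac{\eps}{2} = h(Y) - \eps$, as required.

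The only point that needs care is the bookkeeping around the spacing: the unit length must be chosen to equal exactly $n - m$, so that the copies of $\theta$ hit every $n$-window, while still leaving $f = n - 2m - 2g = \Theta\bigl((1-2\beta)n\bigr)$ free coordinates per unit; the remaining ingredients — the iterated use of strong irreducibility to realize the concatenations as genuine points of $Y$, and the entropy estimate — are routine.
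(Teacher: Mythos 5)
Your proposal is correct and follows essentially the same route as the paper's proof: both build points (or blocks) of $S$ by repeating the pattern ``$\theta$, gap word of length $g$, free word of length roughly $(1-2\beta)n-2g$, gap word'' with spacing chosen so that $\theta$ meets every window of length $n$, using strong irreducibility to glue, and then lower-bound the entropy by counting the free words via $|\cB_k(Y)|\geq e^{kh(Y)}$. The only differences are cosmetic (your constants $\beta=\eps/(4h(Y))$ and unit length $n-m$ versus the paper's $\beta=\min\{\eps/(8h(Y)),1/2\}$ and unit length about $(1-\beta)n$, and your packaging of the count inside an auxiliary subshift $S'\subseteq S$ rather than counting blocks of $S$ directly).
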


\begin{proof}
Let $g$ be the gap for $Y$. Let $\beta = \min \{ \eps/(8 h(Y)), 1/2 \}$ and let $N = \lceil 4(2g+1)h(Y)/\eps \rceil$. Let $n \geq N$ and fix $\theta \in \cB_{\lfloor \beta n \rfloor}(Y)$. For $m \geq n$, and for all $u_1, \dots, u_k \in \cB_{(1-2\beta)n-2g}(Y)$, where $k = \lfloor m/n \rfloor$, there exist $v^{\pm}_1, \dots, v^{\pm}_k \in \cB_g(Y)$ and $v_0 \in \cB_{m-kn}(Y)$ such that
\[
v_0 \theta v^-_1 u_1 v^+_1 \theta v^-_2 u_2 v^+_2 \dots \theta v^-_k u_k v^+_k \in \cB_m(Y) 
\]
Therefore, by manipulation of logarithms and the fact that $h(Y) = \inf_{\ell \geq 1} \frac{1}{\ell} \log |\cB_{\ell}(Y) |$ by definition,
\begin{align*}
    |\cB_m(S)| &\geq |\cB_{\lfloor (1-2\beta)n\rfloor - 2g}(Y)|^{\lfloor m/n \rfloor} \\
    \frac{1}{m} \log |\cB_m(S)| &\geq \frac{1}{m} \log \left(  |\cB_{\lfloor (1-2\beta)n\rfloor - 2g}(Y)|^{(m-1)/n} \right) \\
    &= \left( 1 - \frac{1}{m}  \right) \frac{1}{n} \log |\cB_{\lfloor (1-2\beta)n\rfloor - 2g}(Y)| \\
    &\geq \left( 1 - \frac{1}{m}  \right) \frac{1}{n} ( \lfloor(1-2\beta)n\rfloor - 2g) h(Y) \\
    &> h(Y) - \eps/2
\end{align*}
for large enough $m$, where the final inequality follows from the choices of $\beta$ and $N$. We conclude that $h(S) = \liminf_{m \to \infty} \frac{1}{m} |\cB_n(S)| > h(Y) - \eps$.
\end{proof}

\begin{proof}[Proof of \Cref{stamp-exist}]
It is clearly enough to prove the result for $u_1, u_2$ sufficiently long, since we can then pass to subwords of $u_1,u_2$. By \Cref{synd}, let $\beta \in (0,1)$, $m$ sufficiently large, and $\theta \in \cB_{\beta m}(Y) \sm \cB(W)$ be such that the subshift $S \subset Y$ defined by requiring at least one appearance of $\theta$ in any block of length $m$ has $h(S) > 0$. Let $\alpha \in (0,1)$ be arbitrary, and let $n > (m+k)/(1-\alpha)$ be large enough that, by \Cref{self-overlap}, there exists $\mu \in \cB_n(S)$ such that $\mu$ has no self-overlap by more than $\alpha n$, in particular by more than $n-(m+k)$.

Let $u_1 \in \cB_{k_1}(U)$, $u_2 \in \cB_{k_2}(U)$ with $k_1, k_2 \geq m$ and let $v_1, v_2 \in \cB_{k}(Y)$. Then $\mu$ cannot appear in $u_1 v_2 \mu v_2 u_2$ except at the position explicitly indicated. Indeed, $\mu$ cannot appear at a position shifted by at most $m+k$---otherwise, $\mu$ would overlap itself by too much---and it cannot appear at a position shifted by more than $m+k$, as it would then overlap with $u_1$ or $u_2$ in a block of length at least $m$, contradicting the fact that $\mu \in \cB(S)$, and thus has $\theta$ as a subword.
\end{proof}

\subsection{Entropy and periodic points}\label{ss-good-per}

We first show that at least a positive fraction of periodic points in $Y$ of sufficient least period have a preimage of equal least period, and in particular that their growth is exponential with rate $h(Y)$.

\begin{prop}\label{growth-rn}
Let $X$ be a mixing SFT, $Y$ a mixing sofic shift, and $\pi: X \to Y$ a factor code. Then $\lim_{n \to \infty} \frac{1}{n} \log r_n(\pi) = h(Y)$. 
\end{prop}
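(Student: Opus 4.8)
The plan is to prove the two inequalities $\limsup_n \frac1n \log r_n(\pi) \le h(Y)$ and $\liminf_n \frac1n \log r_n(\pi) \ge h(Y)$ separately. Throughout I assume $h(Y) > 0$; if $h(Y) = 0$ then a mixing sofic shift must be a single fixed point, a degenerate case. The upper bound is immediate: since $R_n(\pi) \subseteq Q_n(Y)$ we have $r_n(\pi) \le q_n(Y)$, and $\frac1n\log q_n(Y) \to h(Y)$ because $Y$ is mixing sofic. So the content is the lower bound.

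For the lower bound, fix $\eps > 0$ and a small $\rho \in (0,\tfrac12)$, and reduce (by passing to a higher block presentation of $X$, which changes neither $h(Y)$ nor any $r_n(\pi)$, since a conjugacy preserves least periods) to the case where $\pi$ is a $1$-block code on a $1$-step SFT $X$ with mixing gap $g$. The observation that drives the argument is that if $x \in X$ is periodic with $\per(x) \mid n$ and its image $y = \pi(x)$ happens to satisfy $\per(y) = n$, then automatically $\per(x) = n$ (as $\per(y) \mid \per(x) \mid n$), so $y \in R_n(\pi)$. Thus it is enough to produce many periodic points of $X$ of period dividing $n$ whose $Y$-images have least period exactly $n$, and then to bound below the number of \emph{distinct} such images. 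The tool for controlling least periods and distinctness simultaneously will be a block $\theta$ of length $L \approx \rho n$ in $Y$ with very little self-overlap, playing a role like that of a stamp.

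Concretely: for each large $n$ put $L = \lfloor \rho n\rfloor$ and $m = n - L - 2g$. By \Cref{self-overlap} (with $\alpha = \tfrac13$), for $n$ large there is $\theta \in \cB_L(Y)$ with no self-overlap exceeding $L/3$; fix a length-$L$ preimage $\hat\theta \in \cB_L(X)$ (every $Y$-block has one since $\pi$ is a surjective $1$-block code). For each $p \in \cB_m(Y)$, fix a length-$m$ preimage $\hat p \in \cB_m(X)$ and, using strong irreducibility of $X$, blocks $\delta^{\pm}_p \in \cB_g(X)$ so that $\hat\theta\,\delta^-_p\,\hat p\,\delta^+_p\,\hat\theta \in \cB(X)$; since $X$ is $1$-step, $x_p := (\hat\theta\,\delta^-_p\,\hat p\,\delta^+_p)^\infty \in X$, and $y_p := \pi(x_p) = (\theta\,\pi(\delta^-_p)\,p\,\pi(\delta^+_p))^\infty \in Y$, both of period dividing $n$. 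Call $p$ \emph{good} if $\theta$ occurs exactly once in the cyclic word $W_p := \theta\,\pi(\delta^-_p)\,p\,\pi(\delta^+_p)$ of length $n$ (equivalently, $\theta$ occurs in $y_p$ exactly at positions $\equiv 0 \bmod n$). I then claim: (i) if $p$ is good then $\per(y_p) = n$ — for if $\per(y_p) = m' \mid n$ with $m' < n$ then $m' \le n/2$ while $L < n/2$, so periodicity would force a second occurrence of $\theta$ at cyclic position $m'$ in $W_p$ — and hence $y_p \in R_n(\pi)$ by the key observation; (ii) $p \mapsto y_p$ is injective on good $p$, since the unique occurrence of $\theta$ in $y_p$ lets us recover $W_p$, whose middle length-$m$ segment is $p$; and (iii) all but a vanishing fraction of $p \in \cB_m(Y)$ are good. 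Granting (iii), $r_n(\pi) \ge \tfrac12|\cB_m(Y)| \ge \tfrac12 e^{m h(Y)}$ for $n$ large, so $\frac1n\log r_n(\pi) \to (1-\rho)h(Y)$, and letting $\rho \to 0$ finishes the proof.

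The main work — and the step I expect to be the real obstacle, though it is elementary given \Cref{self-overlap} — is (iii). A second occurrence of $\theta$ in $W_p$, starting at a position $j$ with $1 \le j \le n-1$, must either overlap the canonical copy (at position $0$), or its cyclic wrap-around, over a length exceeding $L/3$, which is impossible by the self-overlap bound on $\theta$; or else it overlaps the length-$m$ middle segment $p$ of $W_p$ over an interval of length at least $L/3$ (the parts of $W_p$ outside $p$ having total length only $L + 2g$), and on that interval $p$ is pinned to a specific subword of $\theta$ determined by $j$. Hence the number of non-good $p$ is at most $n\cdot|\cB_{m - \lfloor L/4\rfloor}(Y)|$; since $L = \lfloor\rho n\rfloor$ grows linearly while $|\cB_k(Y)| = e^{k(h(Y)+o(1))}$ and $|\cB_m(Y)| \ge e^{m h(Y)}$, this is $o(|\cB_m(Y)|)$, as required. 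The bookkeeping — tracking which positions $j$ force how much of $p$, including positions near the two ends of $W_p$ and across the wrap-around — is the only delicate point.
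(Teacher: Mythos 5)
Your argument is correct in substance, and it rests on the same two ingredients as the paper's proof of \Cref{growth-rn} --- \Cref{self-overlap} plus closing a lifted block into a periodic point of $X$ along the mixing gap $g$, together with the observation that $\per(x)\mid n$ and $\per(\pi(x))=n$ force $\per(x)=n$ --- but you deploy them differently. The paper takes the low-self-overlap block to be essentially the \emph{whole} period: it lifts each $v\in\cB_{n-g}(Y)$ with self-overlap at most $n/3$, closes it up to a point of period dividing $n$, and then least period exactly $n$ is immediate from the self-overlap bound, so $r_n(\pi)\geq c\exp(nh(Y))$ falls straight out of \Cref{self-overlap} with no auxiliary counting and no limiting parameter. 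Your marker-plus-payload decomposition ($\theta$ of length $\rho n$, payload $p$ of length $\approx(1-\rho)n$) buys a recognizable ``stamp-like'' structure that the paper does not need here, at the cost of step (iii) and the $\rho\to 0$ limit at the end; it is a fine proof, just longer than necessary. Two small repairs. First, in (iii) the stated bound $n\cdot|\cB_{m-\lfloor L/4\rfloor}(Y)|$ is not a legitimate inequality for a general subshift: deleting the pinned subword does not inject the bad words into $\cB_{m-\lfloor L/4\rfloor}(Y)$. Instead, for each spurious position $j$ bound the bad set by $|\cB_a(Y)|\,|\cB_b(Y)|$ with $a+b\leq m-\lceil L/3\rceil$ (the two unconstrained pieces of $p$), and use $|\cB_k(Y)|\leq\exp(k(h(Y)+\eps'))$ for large $k$ with $\eps'$ small compared to $\rho h(Y)$; this gives a bad fraction at most $n\exp(-cL)$, which is all you need. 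Second, your dismissal of $h(Y)=0$ is slightly too quick: if $Y$ is a single fixed point and the mixing SFT $X$ happens to have no fixed point, then $r_n(\pi)=0$ for all $n$ and the stated limit fails; this is harmless relative to the paper, whose own proof also tacitly assumes $h(Y)>0$ (it invokes \Cref{self-overlap}), and in every application one has $h(Y)>h(Z)\geq 0$, but it deserves a sentence rather than the claim that the degenerate case is automatically fine.
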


\begin{proof}
Let $g$ be the mixing gap of $X$. By \Cref{self-overlap}, let $b > 0$ and $N > 3g$ be such that, for all $n \geq N$, the number of blocks in $Y$ of length $n-g$ with no self-overlap by more than $n/3$ is at least $c \exp(nh(Y))$, where we may take $c = \frac{1}{2} \exp(-g h(X))$. For each block $v \in \cB_{n-g}(Y)$, there exists a periodic point $x \in X$ with $\pi(x)_{[0,n-g)} = v$ such that $\per(x)$ divides $n$. Thus $\pi(x)$ is also periodic with least period dividing $n$. Moreover, if $v$ has no self-overlap by more than $n/3$, then in fact $\per(\pi(x)) = n$. Therefore $r_n(\pi) \geq c \exp(n h(Y))$, so $\liminf_{n \to \infty} \frac{1}{n} \log r_n(\pi) \geq h(Y)$, matching $\limsup_{n \to \infty} \frac{1}{n} \log r_n(\pi) \leq \lim_{n \to \infty} \frac{1}{n} \log q_n(Y) = h(Y)$.
\end{proof}

We now assemble the quantitative results proven so far.

\begin{prop}\label{quant-summary}
Let $X$ be a mixing SFT, $Y$ a mixing sofic shift, and $\pi: X \to Y$ a factor code. Let $\eps > 0$ and $N_0 \geq 1$. Then there exist $N_1 \geq N_0$ and proper subshifts $W \sn Y$, $V = \pi^{-1}(W) \subset X$, such that: $h(W) > h(Y) - \eps$; for $n \leq N_1$, $r_n(\pi|_V) = r_n(\pi)$; and for $n \geq N_1$, $r_n(\pi|_V) > \exp(n (h(Y) - \eps))$.
\end{prop}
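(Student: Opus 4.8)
The plan is to take for $W$ the union of a large-entropy \emph{proper} SFT subshift of $Y$ with the finite set of periodic points of least period at most $N_1$ lying in some $R_n(\pi)$: the SFT piece supplies the entropy bound $h(W) > h(Y) - \eps$ and the properness $W \sn Y$; the finitely many periodic orbits force $r_n(\pi|_V) = r_n(\pi)$ for $n \le N_1$; and the exponential lower bound $r_n(\pi|_V) > \exp(n(h(Y)-\eps))$ for $n \ge N_1$ comes from applying \Cref{growth-rn} to the restriction of $\pi$ to an auxiliary mixing SFT sitting inside $X$.

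Concretely, I would first apply \Cref{sofic-approx-inside-sft} to obtain an SFT $W_0 \subseteq Y$ with $h(W_0) > h(Y) - \eps$, which we may take to be proper in $Y$ (automatic unless $Y$ is itself an SFT, in which case one replaces $W_0$ by the subshift of $Y$ forbidding a single sufficiently long block, whose entropy still exceeds $h(Y) - \eps$). Feeding $W_0$ into \Cref{enlarge-mixing} produces a mixing SFT $V_1 \subset X$ with $W_0 \subseteq W_1 := \pi(V_1) \sn Y$; since $Y$ is irreducible sofic and $W_1 \sn Y$, we have $h(Y) > h(W_1) \ge h(W_0) > h(Y) - \eps$. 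Now $\pi|_{V_1} \colon V_1 \to W_1$ is a factor code from a mixing SFT onto a mixing sofic shift, so \Cref{growth-rn} gives $\frac{1}{n} \log r_n(\pi|_{V_1}) \to h(W_1) > h(Y) - \eps$; fix $N_1 \ge N_0$ so large that $r_n(\pi|_{V_1}) > \exp(n(h(Y) - \eps))$ for all $n \ge N_1$.

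Finally, set $P = \bigcup_{n=1}^{N_1} R_n(\pi)$ (a finite union of periodic orbits of $Y$), $W = W_1 \cup P$, and $V = \pi^{-1}(W)$. Then $h(W) = \max(h(W_1), 0) = h(W_1)$, which lies strictly between $h(Y) - \eps$ and $h(Y)$, so $W \sn Y$ is proper. Since $\pi$ is surjective, $\pi(V) = W$, and for $y \in W$ a $\pi$-preimage in $V = \pi^{-1}(W)$ is the same as a $\pi$-preimage in $X$; hence $R_n(\pi|_V) = R_n(\pi) \cap W$. For $n \le N_1$ we have $R_n(\pi) \subseteq P \subseteq W$, so $r_n(\pi|_V) = r_n(\pi)$. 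For $n \ge N_1$, note $V_1 \subseteq \pi^{-1}(W_1) \subseteq V$ and $R_n(\pi|_{V_1}) \subseteq R_n(\pi) \cap W$, so $r_n(\pi|_V) \ge r_n(\pi|_{V_1}) > \exp(n(h(Y) - \eps))$, which completes the verification.

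The proof is essentially an assembly of \Cref{sofic-approx-inside-sft}, \Cref{enlarge-mixing}, and \Cref{growth-rn}; the step needing the most care is the bookkeeping in the last paragraph, in particular the identity $R_n(\pi|_V) = R_n(\pi) \cap W$ and the ensuing chain $r_n(\pi) \ge r_n(\pi|_V) \ge r_n(\pi|_{V_1})$, which rely on $V$ being the \emph{full} preimage $\pi^{-1}(W)$ whereas $V_1$ is merely one mixing SFT inside it. The only genuine subtlety beyond definition-chasing is the edge case where $Y$ is an SFT, handled above by passing to a slightly smaller SFT.
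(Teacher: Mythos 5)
Your proposal is correct and follows essentially the same route as the paper's proof: use \Cref{sofic-approx-inside-sft} and \Cref{enlarge-mixing} to produce a mixing SFT $V_1 \subset X$ with $W_1 = \pi(V_1) \sn Y$ of entropy within $\eps$ of $h(Y)$, apply \Cref{growth-rn} to $\pi|_{V_1}$ to fix $N_1$, adjoin the finitely many orbits in $\bigcup_{n \leq N_1} R_n(\pi)$ to form $W$, take $V = \pi^{-1}(W)$, and get properness from $h(W) = h(W_1) < h(Y)$. If anything you are slightly more explicit than the paper, both in recording the identity $R_n(\pi|_V) = R_n(\pi) \cap W$ (which is exactly why the full preimage is taken) and in flagging the edge case where $Y$ is itself an SFT and the approximating SFT must be made proper, a point the paper's proof passes over silently.
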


\begin{proof}
By \Cref{sofic-approx-inside-sft} and \Cref{enlarge-mixing}, let $V_1 \subset X$ be a mixing SFT such that $h(Y) - \eps/2 < h(\pi(V_0)) < h(Y)$. Let $W_1 = \pi(V_1)$. By \Cref{growth-rn}, let $N_1 \geq N_0$ be such that for any $n \geq N_1$, we have $\frac{1}{n} \log r_n(\pi|_{V_1}) > h(W_1) - \eps/2 > h(Y) - \eps$. Let $W = W_1 \cup \bigcup_{n=1}^{N_1} R_n(\pi)$ and $V = \pi^{-1}(W)$. Then $r_n(\pi|_V) = r_n(\pi)$ for all $n \leq N_1$.

To see that $W \neq Y$, observe that the only $n$-blocks in $W$ that may not be in $W_1$ are those in the low-order periodic points that have been adjoined, which are bounded in number by a constant. That is, $|\cB_n(W)| \leq |\cB_n(W_1)| + C$ for all $n \geq N_1$, where we can take $C = \sum_{k=1}^{N_1} k |R_k(\pi)|$. Thus $h(W) = h(W_1) < h(Y)$.
\end{proof}

\Cref{quant-summary} is the final input to the proof of \Cref{custom-W-exists}, and thus of \Cref{main-thm}.

\begin{proof}[Proof of \Cref{custom-W-exists}]
Let $\eps = h(Y) - h(Z)$. Let $N_0 \geq 1$ be large enough that for all $n \geq N_0$,
\[
\frac{1}{n} \log \max \{ q_n(Z), |\cB_n(Z)|  \} < h(Z)+ \frac{\eps}{4}
\]
By \Cref{quant-summary}, let $W \subset Y$, $V = \pi^{-1}(W) \subset X$, and $N_1 \geq N_0$ be such that $h(W) > h(Y) - \frac{\eps}{4}$, $r_n(\pi|_V) = r_n(\pi)$ for all $n \leq N_1$, and $\frac{1}{n} \log  r_n(\pi|_V) > h(Y) - \frac{\eps}{4}$ for all $n \geq N_1$. Note that $h(W) > h(Z) + \frac{\eps}{2}$ and that $q_n(Z) \leq r_n(\pi|_V)$ for all $n \geq 1$.

Let $g$ be the mixing gap of $X$. By \Cref{stamp-exist}, let $\mu \in \cB(Y) \sm \cB(W)$ be a $(Y,W,g)$ stamp. Let $\ell = |\mu| + 2g$. Then since $h(Z) < h(W)$, there exists $N$ sufficiently large so that for all $n \geq N$, in particular for $N+\ell \leq n \leq 2N+\ell-1$, we have $|\cB_n(Z)| < |\cB_{n-\ell}(W)|$.
\end{proof}

\section{Proofs of \Cref{wlog-sft} and \Cref{mmb-cor}}\label{sec-conseq}

We first use \Cref{growth-rn}, along with facts about Markov approximations in \Cref{ss-mark}, to prove \Cref{wlog-sft}, which reduces \Cref{main-thm} to the case where $Z$ is a $1$-step SFT.

\begin{proof}[Proof of \Cref{wlog-sft}]
We use the properties of Markov approximations mentioned in \Cref{conv-sec}. Let $\eps = h(Y)-h(Z)$. Let $m_0$ be such that $h(Z_{m_0}) < h(Z) + \eps/3$, where $Z_{m_0}$ is the $m_0$th Markov approximation to $Z$, and such that, by \Cref{growth-rn}, for all $n \geq m_0$, $\frac{1}{n} \log r_n(\pi) > h(Y) - \eps/3$. Let $m_1 \geq m_0$ be such that $\frac{1}{n} \log q_n(Z_{m_0}) < h(Z_{m_0}) + \eps/3$ for all $n \geq m_1$. Let $m_2 \geq m_1$ be such that for all periodic points $z \in P(Z_{m_1}) \sm Z$ (under the natural embedding $Z \hookrightarrow Z_{m_1}$) with $\per(z) \leq m_1$, we have $z_{[0,m_2)} \notin \cB_{m_2}(Z)$. Then $Z_{m_2}$ satisfies $q_n(Z_{m_2}) = q_n(Z) \leq r_n(\pi)$ for all $n \leq m_1$. Moreover, since $Z_{m_2} \subset Z_{m_0}$, $\frac{1}{n} \log q_n(Z_{m_2}) \leq \frac{1}{n} \log q_n(Z_{m_0})$ for all $n$; in particular, 
\begin{align*}
\frac{1}{n} \log q_n(Z_{m_2}) &< h(Z_{m_0}) + \eps/3 \\
&< h(Z) + 2\eps/3 \\
&< h(Y) - \eps/3 \\
&< \frac{1}{n} \log r_n(\pi)
\end{align*}
for all $n \geq m_1$. Taking $Z' = Z_{m_2}^{[m_2]}$ to be the $m_2$th higher block shift, the lemma is proved.
\end{proof}

To prove \Cref{mmb-cor}, in the mixing sofic case, we use \Cref{blowup} to handle low-order periodic point obstructions, with periodic points of sufficiently high order controlled by \Cref{growth-rn}. To handle the arbitrary case, we first give an improved Markov approximation (\Cref{embed-mixing}), embedding an arbitrary subshift into a mixing SFT with only slightly greater entropy. The construction uses \Cref{gap-sft}; in \Cref{gap-sft-entropy} we estimate the entropy of the mixing SFT constructed in \Cref{gap-sft}.

\begin{lemma}\label{gap-sft-entropy}
Let $X$ be a mixing SFT with gap $g$ and let $V_0 \subset X$ be an SFT. Let $k \geq g$ and let $\mu \in \cB(X) \sm \cB(V_0)$ be an $(X,V_0,k)$ stamp. For any $\eps > 0$, there exists $N \geq |\mu|$ such that $h(V_1) < h(V_0) + \eps$, where $V_1$ (depending on $N$) is as in \Cref{gap-sft}.
\end{lemma}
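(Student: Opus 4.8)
The plan is to bound $h(V_1)$ directly from the combinatorial description of $V_1$ given in \Cref{gap-sft}, exploiting that once $N$ is large the separator blocks $\gamma^+_i\mu\gamma^-_i$ (each of length exactly $\ell=|\mu|+2k$) are sparse. Recall from \Cref{gap-sft} and its proof that a point of $V_1$ is, up to taking closures, a bi-infinite concatenation $\cdots v_{-1}\,\gamma^+_{-1}\mu\gamma^-_0\, v_0\,\gamma^+_0\mu\gamma^-_1\, v_1\cdots$ with $v_i\in\cB(V_0)$, $|v_i|\ge N$, and $\gamma^\pm_i\in\cB_k(X)$; since $V_1$ is the closure of the set of such points, $\cB(V_1)$ is exactly the set of subwords of such concatenations, and by \Cref{stamp-usage} (applied to consecutive copies of $\mu$) the word $\mu$ occurs in such a concatenation only at the ``designed'' positions, which are therefore spaced at least $N+\ell>N$ apart. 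So the strategy is: count $\cB_n(V_1)$ by first choosing the positions of the copies of $\mu$ in a length-$n$ window and then choosing the content of the $V_0$-blocks between them, and observe that for $N$ large the first choice costs only subexponentially much in $n$ (uniformly) while the second is governed by $h(V_0)$.

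Concretely, I would fix $\delta=\eps/4$ and, using $\tfrac1j\log|\cB_j(V_0)|\to h(V_0)$, fix a constant $C\ge 1$ with $|\cB_j(V_0)|\le C\exp\!\big(j(h(V_0)+\delta)\big)$ for all $j\ge 0$. Take $w\in\cB_n(V_1)$, realized as $x_{[0,n)}$ for some concatenation $x$ as above, and let $0\le i_1<\cdots<i_r<n$ list the starting positions of the copies of $\mu$ lying entirely inside $[0,n)$. By the spacing property $r\le n/N+1$, so the pair $\big(r,\{i_1,\dots,i_r\}\big)$ ranges over at most $\sum_{r\le n/N+1}\binom{n}{r}\le \mathrm{poly}(n)\exp\!\big(n\,\eta_1(N)\big)$ possibilities, where (since $\binom{n}{r}\le (en/r)^r$, a function increasing in $r$ on $[1,n)$) one may take $\eta_1(N)=\tfrac1N\log(eN)\to 0$. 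Given the positions, $w$ is determined by its restriction to each interval $[i_j+|\mu|,\,i_{j+1})$ — a block $\gamma^-v\gamma^+$ with $\gamma^\pm\in\cB_k(X)$ and $v\in\cB_{(i_{j+1}-i_j)-\ell}(V_0)$, at most $|\cB_k(X)|^2\,|\cB_{(i_{j+1}-i_j)-\ell}(V_0)|$ choices — together with the two end pieces $w_{[0,i_1)}$ and $w_{[i_r+|\mu|,\,n)}$, each of which is determined by at most two length-$k$ blocks from $X$ and a sub-block of some $v_i\in\cB(V_0)$, hence jointly at most $\mathrm{poly}(n)\cdot|\cB_k(X)|^{2}\cdot\big(\max_{j\le n}|\cB_j(V_0)|\big)^{2}$ choices. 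The lengths of all the $V_0$-blocks involved (the middle ones and the shown parts of the two end ones) sum to at most $n$; bounding each via $|\cB_j(V_0)|\le C\exp(j(h(V_0)+\delta))$ and collecting the at most $n/N+3$ factors of $|\cB_k(X)|^2$ and of $C$ into a term $\exp(n\,\eta_2(N))$ with $\eta_2(N)=\tfrac1N\big(2\log|\cB_k(X)|+\log C\big)\to 0$, I arrive at
\[
|\cB_n(V_1)|\ \le\ \mathrm{poly}(n)\cdot\exp\!\Big(n\big(h(V_0)+\delta+\eta_1(N)+\eta_2(N)\big)\Big).
\]
Hence $h(V_1)=\lim_n\tfrac1n\log|\cB_n(V_1)|\le h(V_0)+\eps/4+\eta_1(N)+\eta_2(N)$, and choosing $N\ge|\mu|$ large enough that $\eta_1(N)+\eta_2(N)<\eps/2$ gives $h(V_1)<h(V_0)+\eps$, as required.

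The content is routine once the structure of $V_1$ is in hand; the one place that needs care — and the main pitfall — is the count of the configurations of $\mu$-positions. Since a length-$n$ window contains of order $n/N$ copies of $\mu$, the naive estimate ``at most $n^{(\#\mu)}$'' gives $n^{\Theta(n/N)}$, whose exponential growth rate $\tfrac1N\log n$ is unbounded in $n$ for fixed $N$ and thus useless; one must instead use $\binom{n}{\Theta(n/N)}\le(eN)^{\Theta(n/N)}$, whose rate $\tfrac1N\log(eN)$ does tend to $0$ as $N\to\infty$. The same care applies to the $|\cB_k(X)|^{2r}$ and $C^{r}$ factors, and the end pieces must be estimated through the block structure of $V_1$ rather than by $|\cB_n(X)|$, which could exceed $\exp(n(h(V_0)+\eps))$ if $h(X)>h(V_0)$.
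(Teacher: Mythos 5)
Your argument is correct, but it takes a more laborious route than the paper's. The paper exploits the fact that $h(V_1)=\inf_n \frac1n\log|\cB_n(V_1)|$, so it suffices to bound the block count at the \emph{single} length $n=N$: since the separator blocks $\gamma^+\mu\gamma^-$ are separated by $V_0$-blocks of length at least $N$, a window of length $N$ meets at most one (possibly partial) separator, and one gets directly $|\cB_N(V_1)|\le N\,|\cB_k(X)|^2\max_{0\le \ell\le N/2}|\cB_\ell(V_0)||\cB_{N-\ell}(V_0)|$, which after choosing $N$ large gives $\frac1N\log|\cB_N(V_1)|<h(V_0)+\eps$ with no need to track multiple separators. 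You instead bound $|\cB_n(V_1)|$ for all large $n$, which forces the configuration count over the roughly $n/N$ copies of $\mu$; your use of $\binom{n}{r}\le(en/r)^r$ to get the rate $\frac1N\log(eN)\to 0$ (rather than the useless $n^{\Theta(n/N)}$ bound) is exactly the right fix, and the collection of the $|\cB_k(X)|^{2r}$ and $C^r$ factors into $\exp(n\,\eta_2(N))$ is sound, with $C$ fixed before $N$ so there is no circularity. Two small accounting points in your write-up: the two end pieces of a window (and the degenerate case where no full copy of $\mu$ lies inside the window) can involve up to three or four length-$k$ blocks and a truncated copy of $\mu$ with an offset, not just two length-$k$ blocks; and strictly speaking you do not need \Cref{stamp-usage} at all, since you only parametrize words by the \emph{designed} separator positions, whose spacing already follows from $|v_i|\ge N$. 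Neither affects the conclusion, as these corrections only change the constants absorbed into your $\mathrm{poly}(n)$ and $\eta_2(N)$ terms. The trade-off: the paper's single-window trick is shorter and avoids all configuration counting, while your estimate yields the (slightly stronger, though unneeded) asymptotic bound on $\frac1n\log|\cB_n(V_1)|$ for every large $n$ rather than at one length.
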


\begin{proof}
Let $N_0 \geq 1$ be such that for all $n \geq N_0$ we have $\frac{1}{n} \log |\cB_n(V_0)| < h(V_0) + \eps/4$. Let $N > 2N_0$ be such that 
\[
\frac{1}{N} \max \{ \log N,  \log |\cB_k(X)|^2, \log |\cB_{N_0}(V_0)|\} < \frac{\eps}{4}
\]

We will show that $\frac{1}{N} \log |\cB_N(V_1)| < h(V_0) + \eps$. Consider a block of length $N$ in $V_1$. Such a block can contain at most one full or partial block of the form $\gamma^+ \mu \gamma^-$ where $\gamma^{\pm} \in \cB_k(X)$. The $\mu$, if present, can begin at any of the $N$ positions. The rest of the block of length $N$, outside the block $\gamma^+ \mu \gamma^-$, consists of one or two blocks from $V_0$, with length totalling at most $N$. We thus have 
\[
|\cB_N(V_1)| \leq N |\cB_k(X)|^2 \max_{0 \leq \ell \leq N/2} |\cB_{\ell}(V_0)| |\cB_{N-\ell}(V_0)| 
\]
If $0 \leq \ell \leq N_0$, then $|\cB_{\ell}(V_0)| |\cB_{N-\ell}(V_0)|  \leq |\cB_{N_0}(V_0)| |\cB_{N}(V_0)|$, so 
\begin{align*}
    \frac{1}{N} \log \left( |\cB_{\ell}(V_0)| |\cB_{N-\ell}(V_0)| \right) &\leq \frac{1}{N} \log |\cB_{N_0}(V_0)| + \frac{1}{N} \log |\cB_{N}(V_0)| \\
    &< h(V_0) + \frac{\eps}{2}
\end{align*}
If $N_0 \leq \ell \leq N/2$, then
\begin{align*}
    \frac{1}{N} \log \left( |\cB_{\ell}(V_0)| |\cB_{N-\ell}(V_0)| \right) &< \frac{1}{N}  \left( \ell \left(h(V_0) + \frac{\eps}{4} \right)  +  (N-\ell) \left(h(V_0) + \frac{\eps}{4} \right)  \right) \\
    &= h(V_0) + \frac{\eps}{4}
\end{align*}
Therefore 
\begin{align*}
    \frac{1}{N} \log |\cB_N(V_1)| &< \frac{1}{N} \log N + \frac{1}{N} \log |\cB_k(X)|^2 + h(V_0) + \frac{\eps}{2} \\
    &< h(V_0) + \eps
\end{align*}
by the above choice of $N$.
\end{proof}

along with the following standard lemma improving the construction of the Markov approximation. For completeness, we include a proof using \Cref{gap-sft}.

\begin{lemma}\label{embed-mixing}
Let $Z$ be a subshift and let $\eps > 0$. Then there exists a mixing SFT $V$ containing $Z$ with $h(V) < h(Z) + \eps$.
\end{lemma}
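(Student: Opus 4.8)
The plan is to place $Z$ inside a Markov approximation --- an SFT containing $Z$ with nearly the same entropy --- and then enlarge that SFT to a mixing SFT via the construction of \Cref{gap-sft}, whose entropy cost is controlled by \Cref{gap-sft-entropy}. We may assume $Z \neq \emptyset$ (otherwise the statement is vacuous, as $h(\emptyset) = -\infty$). By the properties of Markov approximations recalled in \Cref{ss-mark}, fix $n$ so large that the $n$th Markov approximation $Z_n$ --- which is an SFT with $Z \subseteq Z_n$ --- satisfies $h(Z_n) < h(Z) + \eps/2$.

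Next I would build the ambient mixing SFT. Let $\cA$ be the alphabet of $Z$, adjoin a fresh symbol $\star \notin \cA$, and let $X = (\cA \cup \{\star\})^{\Z}$ be the full shift, which is a mixing SFT with gap $g = 1$. Set $V_0 = Z_n$, regarded as a subshift of $X$; it is an SFT, and it is a proper subshift of $X$ since no point of $V_0$ contains $\star$. By \Cref{stamp-exist} (applied with $k = g = 1$), there is an $(X, V_0, 1)$ stamp $\mu \in \cB(X) \sm \cB(V_0)$. Applying \Cref{gap-sft-entropy} with $\eps/2$ in place of $\eps$, choose $N \geq |\mu|$ so that the subshift $V_1$ built from $X, V_0, \mu, N$ as in \Cref{gap-sft} satisfies $h(V_1) < h(V_0) + \eps/2$. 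By \Cref{gap-sft}, $V_1$ is a mixing SFT, and $h(V_1) < h(Z_n) + \eps/2 < h(Z) + \eps$.

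It remains to check $Z \subseteq V_1$, and since $Z \subseteq V_0$ it suffices to show $V_0 \subseteq V_1$. Fix $z \in V_0$ and $m \geq 1$. Since $X$ is a full shift, any bi-infinite string over its alphabet is a legal point, so I can form a point in the set defining $V_1$ as follows: use $v_0 = z_{[-m-N,\,m+N]} \in \cB(V_0)$, which has length at least $N$, bracketed on both sides by blocks $\gamma^+ \mu \gamma^-$ with $\gamma^{\pm} \in \cB_1(X)$, and continued outward by further blocks from $\cB(V_0)$ of length $\geq N$ (which exist because $V_0$ is a nonempty SFT) alternating with further copies of $\gamma^+ \mu \gamma^-$. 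This point lies in $V_1$ and agrees with $z$ on $[-m,m]$; letting $m \to \infty$ shows $z \in V_1$. Thus $V = V_1$ works. The one step requiring care is this last one: it is exactly because the ambient $X$ is a full shift that the sparsely stamped points defining $V_1$ can be chosen to approximate an arbitrary point of $V_0$, which is what forces $V_1 \supseteq V_0 \supseteq Z$; everything else is a direct appeal to \Cref{gap-sft}, \Cref{gap-sft-entropy}, \Cref{stamp-exist}, and the standard facts about Markov approximations.
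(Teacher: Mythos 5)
Your proposal is correct and follows essentially the same route as the paper: Markov approximation $Z_n \supseteq Z$ with $h(Z_n) < h(Z)+\eps/2$, then a stamp from \Cref{stamp-exist} and the construction of \Cref{gap-sft}, with entropy controlled by \Cref{gap-sft-entropy}. The only (harmless) differences are that you adjoin a fresh symbol $\star$ to make $V_0 \sn X$ automatic, where the paper instead treats $Z_n = X$ as a separate trivial case, and that you spell out the closure argument showing $V_0 \subseteq V_1$, which the paper leaves implicit.
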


\begin{proof}
Let $m$ be large enough that $h(Z_m) < h(Z) + \eps/2$, where $Z_m$ is the $m$th Markov approximation to $Z$. Let $X$ be the full shift on the alphabet of $Z$. Certainly $Z_m \subseteq X$. If $Z_m = X$, then we can take $V=X$. If $Z_m \neq X$, then by \Cref{stamp-exist}, let $\mu$ be an $(X,Z_m,k)$ stamp for some $k \geq 0$. Let $V_0 = Z_m$ and $V=V_1$ as in \Cref{gap-sft} where $N$ is large enough that, by \Cref{gap-sft-entropy}, we have $h(V) < \eps/2$. Thus $V$ is indeed a mixing SFT containing $Z$ with $h(V) < h(Z) + \eps$.
\end{proof}

\begin{proof}[Proof of \Cref{mmb-cor}] We first consider the case in which $Z$ is mixing sofic. Let $\Tilde{Z}$ be a mixing SFT and $\chi_0: \Tilde{Z} \to Z$ an almost invertible factor code. If already $q_n(\Tilde{Z}) \leq r_n(\pi)$ for all $n \geq 1$, then we can take $Z' = \Tilde{Z}$ and apply \Cref{main-thm} immediately to construct the claimed embedding $\psi: Z' \to X$. However, if $q_n(\Tilde{Z}) > r_n(\pi)$ for some $n$, so that $X,Y,\pi, \Tilde{Z}$ violate the hypotheses of \Cref{main-thm}, then we need to construct a further extension of $\Tilde{Z}$ which satisfies the hypotheses of \Cref{main-thm}. The construction, consisting a tower of extensions via \Cref{blowup}, is as follows. 

By \Cref{growth-rn}, since $h(\Tilde{Z}) = h(Z) < h(Y)$, there are at most finitely many $n$ such that $q_n(\Tilde{Z}) > r_n(\pi)$. Let $N$ denote the greatest such $n$. Let $C = \sum_{k=1}^{N} \max\{0, q_k(\Tilde{Z}) - r_k(\pi)\}$. That is, $C$ is the number of periodic points by which $X,Y,\pi, \Tilde{Z}$ violate the hypotheses of \Cref{main-thm}. For $1 \leq k \leq N$ and $1 \leq \ell \leq k^{-1} \max\{0, q_k(\Tilde{Z}) - r_k(\pi)\}$, let $z_{k,\ell}$ be periodic points with pairwise disjoint orbits, such that $\per(z_{k,\ell}) = k$. For a given $k$, the union of the orbits of the points $z_{k,\ell}$ has cardinality $\max\{0, q_k(\Tilde{Z}) - r_k(\pi)\}$. Let $C' = \sum_{k=1}^{N} k^{-1} \max\{0, q_k(\Tilde{Z}) - r_k(\pi)\}$ (counting orbits, rather than points), and let $\{ z^{(j)}  \}_{j=1}^{C'} = \{ z_{k,\ell} \}_{k,\ell}$ be an enumeration of the points $z_{k,\ell}$.

Again by \Cref{growth-rn}, let $M > N$ be large enough that for all $n \geq M$, we have $q_n(\Tilde{Z}) + Cn \leq r_n(\pi)$. We now repeatedly apply \Cref{blowup}. Let $Z^{(0)} = \Tilde{Z}$. For $1 \leq j \leq C'$, let $Z^{(j)}$ be a mixing SFT and $\chi^{(j)}: Z^{(j)} \to Z^{(j-1)}$ an almost invertible factor code such that the preimage of the orbit of $z_j$ under $\chi^{(j)}$ is a single orbit of length $M \per(z_j)$, and such that every periodic point in $Z^{(j-1)}$ not in the orbit of $z_j$ has a unique preimage under $\chi^{(j)}$. Let $\eta^{(1)} = \chi^{(1)}$ and $\eta^{(j+1)} = \eta^{(j)} \circ \chi^{(j+1)}$. Let $Z' = Z^{(C')}$ and $\eta = \eta^{(C')} : Z' \to \Tilde{Z}$. Certainly $\eta$ is almost invertible, so $h(Z') = h(\Tilde{Z}) < h(Y)$. We claim that $q_n(Z') \leq r_n(\pi)$ for all $n \geq 1$. Indeed, for each $j$, if $\per(z_j) = k$, then we have $q_k(Z^{(j)}) = q_k(Z^{(j-1)}) - k$, $q_{Mk}(Z^{(j)}) = q_{Mk}(Z^{(j-1)}) + Mk$, and $q_n(Z^{(j)}) = q_n(Z^{(j-1)})$ for all $n \notin \{  k, Mk \}$. Therefore $q_k( Z' ) = r_k(\pi)$, and
\begin{align*}
    q_{Mk}(Z') &= q_{Mk}(\Tilde{Z}) + M \max\{0, q_k(\Tilde{Z}) - r_k(\pi)\} \\
    &\leq q_{Mk}(\Tilde{Z}) + CM \\
    &\leq r_{Mk}(\pi)
\end{align*}
where the last inequality follows from the choice of $M$. Therefore $X,Y,\pi,Z'$ satisfy the hypotheses of \Cref{main-thm}, so there exists a sliding block code $\psi: Z' \to X$ such that $\pi \circ \psi$ is injective. This concludes the proof in the case that $Z$ is mixing sofic.

We now handle the general case, where $Z$ is an arbitrary subshift with $h(Z) < h(Y)$. By \Cref{embed-mixing}, let $V$ be a mixing SFT containing $Z$ with $h(V) < h(Y)$. By the mixing sofic case, let $V'$ be a mixing SFT such that $X,Y,\pi,V'$ satisfy the hypotheses of \Cref{main-thm}, and let $\chi: V' \to V$ be an almost invertible factor code. Let $Z' = \chi^{-1}(Z)$. Then $\chi|_{Z'}$ is still finite-to-one, which concludes the proof.
\end{proof}

\section*{Acknowledgments}

I thank Tom Meyerovitch for posing the question that led to this work and suggesting the argument for \Cref{growth-rn}, and more generally for offering, with Brian Marcus, very generous advice and supervision. I also thank Mike Boyle for a helpful conversation by email about the relationship between my question \cite{vs-it-22-mo} and the embedding problem for sofic shifts.

\bibliographystyle{plain}

\begin{thebibliography}{10}

\bibitem{mb-1984-etds}
Mike Boyle.
\newblock Lower entropy factors of sofic systems.
\newblock {\em Ergod. Th. Dynam. Syst.}, 4:541--557, 1984.

\bibitem{dad-sj-2012}
Dawoud~Ahmad Dastjerdi and Somaye Jangjoo.
\newblock {Dynamics and topology of $S$-gap shifts}.
\newblock {\em Topology Appl.}, 159:26554--2661, 2012.

\bibitem{hmw-2022}
Guangyue Han, Brian Marcus, and Chengyu Wu.
\newblock Markov capacity for codes with an unambiguous symbol.
\newblock arXiv preprint, 2022.
\newblock \url{https://arxiv.org/abs/2210.12251}.

\bibitem{vs-it-22-mo}
Ville~Salo (https://mathoverflow.net/users/123634/ville salo).
\newblock Cohomology for extension problems in symbolic/topological dynamics?
\newblock MathOverflow.
\newblock \url{https://mathoverflow.net/q/424145} (version: 2022-06-07).

\bibitem{kieffer-1981-wahr}
John~C. Kieffer.
\newblock Zero-error stationary coding over stationary channels.
\newblock {\em Prob. Th. Rel. Fields}, 56:113--126, 1981.

\bibitem{wk-82-etds}
Wolfgang Krieger.
\newblock {On the subsystems of topological Markov chains}.
\newblock {\em Ergod. Th. Dynam. Syst.}, 2(2):195--203, 1982.

\bibitem{lm-95-intro}
Douglas Lind and Brian Marcus.
\newblock {\em An introduction to symbolic dynamics and coding}.
\newblock Cambridge University Press, 2nd edition, 2020.

\bibitem{bm-1985-encod}
Brian Marcus.
\newblock Sofic systems and encoding data.
\newblock {\em IEEE Trans. Info. Th.}, IT-31(3):366--377, 1985.

\bibitem{mpw-82-sk}
Brian Marcus, Karl Petersen, and Susan Williams.
\newblock {Transmission rates and factors of Markov chains}.
\newblock In Richard Beals, Anatole Beck, Alexandra Bellow, and Arshag Hajian,
  editors, {\em Conference in modern analysis and probability}, volume~26 of
  {\em Contemp. Math.}, pages 279--293. Amer. Math. Soc., 1984.

\bibitem{shannon-1948-bell}
Claude Shannon.
\newblock A mathematical theory of communication.
\newblock {\em Bell Sys. Tech. J.}, 27:379--423, 623--656, 1948.

\bibitem{thomsen-2004-sofic}
Klaus Thomsen.
\newblock On the structure of a sofic shift space.
\newblock {\em Trans. Amer. Math. Soc.}, 356(9):3557--3619, 2004.

\end{thebibliography}

\end{document}